\title{Rational invariant tori and band edge spectra for non-selfadjoint operators}
\author{Michael Hitrik\\Department of Mathematics \\University of California \\ Los Angeles
\\CA 90095-1555, USA\\\small hitrik@math.ucla.edu
\and Johannes Sj\"ostrand\\IMB, Universit\'e de Bourgogne\\
9 avenue Alain Savary, BP 47870\\21078 Dijon cedex, France\\and UMR 5584 CNRS \\\small johannes.sjostrand@u-bourgogne.fr}
\date{}
\def\wrtext#1{\relax\ifmmode{\leavevmode\hbox{#1}}\else{#1}\fi}
\def\abs#1{\left|#1\right|}
\def\begeq{\begin{equation}}
\def\endeq{\end{equation}}
\def\iint{\int\hskip -2mm\int}
\def\neigh{neighborhood}
\def\Re{{\rm Re\,}}
\def\Im{{\rm Im\,}}
\newcommand{\eps}{\varepsilon}
\def\part#1{\frac{\partial}{\partial #1}}
\def\norm#1{||\,#1\,||}
\newcommand{\real}{\mbox{\bf R}}
\newcommand{\comp}{\mbox{\bf C}}
\newcommand{\z}{\mbox{\bf Z}}
\newcommand{\nat}{\mbox{\bf N}}
\renewcommand{\Re}{\mbox{\rm Re\,}}
\renewcommand{\Im}{\mbox{\rm Im\,}}
\renewcommand{\exp}{\mbox{\rm exp\,}}
\newtheorem{dref}{Definition}[section]
\newtheorem{theo}[dref]{Theorem}
\newtheorem{prop}[dref]{Proposition}
\newenvironment{proof}{\vspace{.3cm}\noindent{{\em Proof:}}}{\hfill$\Box$}
\begin{document}
\maketitle

\begin{abstract}
We study semiclassical asymptotics for spectra of non-selfadjoint perturbations of selfadjoint analytic $h$-pseudodifferential operators in dimension 2, assuming
that the classical flow of the unperturbed part is completely integrable. Complete asymptotic expansions are established for all individual eigenvalues in suitable
regions of the complex spectral plane, near the edges of the spectral band, coming from rational flow-invariant Lagrangian tori.
\end{abstract}

\vskip 2.5mm
\noindent {\bf Keywords and Phrases:} Non-selfadjoint, eigenvalue, spectral asymptotics, resolvent,
semiclassical limit, completely integrable, Lagrangian torus, rational torus, secular perturbation theory, pseudospectrum, exponential weight, FBI transform.

\vskip 2mm
\noindent
{\bf Mathematics Subject Classification 2000}: 35P15, 35P20, 37J35,37J40, 53D22, 58J37, 58J40

\tableofcontents

\section{Introduction}
\setcounter{equation}{0}
Spectra for semiclassical non-selfadjoint operators often display fascinating features, from lattices of low-lying eigenvalues for operators of
Kramers-Fokker-Planck type~\cite{HeSjSt},~\cite{HiPrSt} to eigenvalues for operators with analytic coefficients in dimension one, concentrated to unions of curves,
\cite{LN}, \cite{R}, \cite{GaSh}, \cite{HiSj3a}. The work~\cite{MeSj2} has established that for wide and stable classes of non-selfadjoint analytic
pseudodifferential operators in dimension two, the individual eigenvalues can be determined accurately in the semiclassical limit, by means of a complex
Bohr-Sommerfeld quantization condition, and form a distorted two-dimensional lattice. Now in many natural situations~\cite{Le},~\cite{Sj00},
\cite{Sj96},~\cite{SjZw1}, one encounters non-selfadjoint operators
of the form
\begeq
\label{intro1}
P_{\eps} = p(x,hD_x) + i\eps q(x,hD_x),\quad 0 \leq \eps \ll 1,
\endeq
considered on $\real^n$ or a compact real analytic manifold, with $P_{\eps = 0}$ being selfadjoint. Here $0 < h \ll 1$ is the semiclassical parameter and the
second small parameter $\eps$ represents the strength of the non-selfadjoint perturbation. The principal symbol of $P_{\eps}$ in (\ref{intro1}) is of the form
$p_{\eps}(x,\xi) = p(x,\xi) + i\eps q(x,\xi)$, where $p$ is real, and let us also assume, to fix the ideas, that $q$ is real. Both $p$ and $q$ are assumed to be
analytic, with $p$ elliptic near infinity. The spectrum of $P_{\eps}$ near the origin is confined to a band of width ${\cal O}(\eps)$, and the general problem
is to understand the distribution of eigenvalues of $P_{\eps}$ near $0$, in the semiclassical limit $h\rightarrow 0^+$. To this end, let us assume that $0$ is a
regular value of $p$, so that the energy surface $p^{-1}(0)$ is a smooth compact submanifold of the phase space. We then know~\cite{Ma},~\cite{MaMa} that the real
parts of the eigenvalues of $P_{\eps}$ near $0$ are distributed according to the same Weyl law as that for the unperturbed operator $P_{\eps = 0}$. In order to
study the distribution of the imaginary parts of the eigenvalues, following the method of averaging~\cite{We},~\cite{CdV}, we let $H_p$ be the Hamilton vector
field of $p$ and introduce the time averages
\begeq
\label{intro2}
\langle{q\rangle}_T = \frac{1}{T} \int_{-T/2}^{T/2} q\circ \exp(tH_p)\,dt,\quad T>0,
\endeq
of $q$ along the $H_p$--trajectories. It follows from~\cite{Le},~\cite{Sj00},~\cite{HiSjVu07} that if $z\in {\rm Spec}(P_{\eps})$ is such that
$\abs{{\rm Re}\, z}\leq \delta$, then
\begeq
\label{intro3}
\lim_{T \rightarrow \infty} \inf_{p^{-1}(0)} \langle{q\rangle}_T - o(1) \leq \frac{{\rm Im}\,z}{\eps}  \leq \lim_{T \rightarrow \infty}
\sup_{p^{-1}(0)} \langle{q\rangle}_T +o(1),
\endeq
as $(\eps,\delta,h)\rightarrow 0^+$.

\medskip
\noindent
The spectral analysis for non-selfadjoint operators of the form (\ref{intro1}) has been pursued by the authors in the series of
papers~\cite{HiSj1}--~\cite{HiSjVu07}, the latter work jointly with S. V\~u Ng\d{o}c, when the dimension $n = 2$ and the $H_p$--flow is either periodic
or completely integrable. Let us focus, from now on, on the completely integrable case, which will be considered also in the present work. In this case, the
energy surface $p^{-1}(0)$ is foliated by invariant Lagrangian tori, along with possibly some other more complicated flow-invariant sets. When
$\Lambda \subset p^{-1}(0)$ is an invariant torus such that the rotation number of $H_p$ along $\Lambda$ is Diophantine, i.e. poorly approximated by rational
numbers, or more generally, irrational, we have that the time averages $\langle{q\rangle}_T$ along $\Lambda$ converge to the space average
$\langle{q\rangle}(\Lambda)$ of $q$ over $\Lambda$, as $T\rightarrow \infty$. When $\Lambda$ is a torus with a rational rotation number, or a singular set in
the foliation of $p^{-1}(0)$, then in analogy with (\ref{intro3}), we introduce the compact interval
\begeq
\label{intro4}
Q_{\infty}(\Lambda) = \left[\lim_{T \rightarrow \infty} \inf_{\Lambda} \langle{q\rangle}_T, \lim_{T \rightarrow \infty} \sup_{\Lambda} \langle{q\rangle}_T\right]
\endeq
of limits of the time averages above.

\medskip
\noindent
Let $F_0 \in \real$ be such that $F_0 = \langle{q\rangle}(\Lambda_d)$ for a single Diophantine Lagrangian torus $\Lambda_d \subset p^{-1}(0)$, and let us assume
that
\begeq
\label{intro5}
F_0 \notin Q_{\infty}(\Lambda),
\endeq
for any other invariant set $\Lambda\neq \Lambda_d$ in $p^{-1}(0)$. It was then shown in~\cite{HiSjVu07} that the spectrum of $P_{\eps}$ can be determined completely, modulo ${\cal O}(h^{\infty})$, in a rectangle of the form $[-h^\delta /C,h^\delta /C]+i\eps [F_0-h^\delta /C,F_0+h^\delta /C]$, where $\delta > 0$ and
$\eps$ satisfies $h^K<\eps \ll 1$, for $K \gg 1$. Similarly to~\cite{MeSj2}, the spectrum has a structure of a distorted two-dimensional lattice, with
the horizontal spacing $\sim h$ and the vertical one $\sim \eps h$. A closely related result was obtained in~\cite{HiSj12}, giving a Weyl type asymptotic formula
for the number of eigenvalues of $P_{\eps}$ in an intermediate spectral band, bounded from above and from below by Diophantine levels, such as $F_0$ above.
It turned out that the distribution of the imaginary parts of the eigenvalues of $P_{\eps}$ is governed by a Weyl law, expressed in terms of phase space
volumes associated to $p$ and the long time averages of $q$.

\medskip
\noindent
Having elucidated the role played by flow-invariant Diophantine tori in the spectral analysis of $P_{\eps}$, let us now turn the attention to spectral
contributions of tori that are rational, which constitutes the subject of the present work. Let $F_0 \in \real$ be such that
$F_0 = \langle{q\rangle}(\Lambda_d)$, for a Diophantine torus $\Lambda_d$ as above, and rather than demanding (\ref{intro5}), let us assume that there exists
a rational torus $\Lambda_r\subset p^{-1}(0)$ such that $F_0\in Q_{\infty}(\Lambda_r)$, $F_0 \neq \langle{q\rangle}(\Lambda_r)$, while
$F_0 \notin Q_{\infty}(\Lambda)$, for $\Lambda \neq \Lambda_d,\,\Lambda_r$. An attempt to determine the individual eigenvalues of $P_{\eps}$ near $i\eps F_0$
was made by the authors in the work~\cite{HiSj08b}, by means of the normal form techniques. As a result, the normal forms near $\Lambda_r$ that we obtained
were given by a family of one-dimensional "resonant" non-selfadjoint operators, and the possibility of quite serious
pseudospectral phenomena for this family~\cite{DeSjZw} prevented us from computing the eigenvalues individually. Correspondingly, the main result
of~\cite{HiSj08b} was weaker, establishing that the spectrum of $P_{\eps}$ near $i\eps F_0$ was of the form $E_d \cup E_r$, where the "Diophantine" contribution
$E_d$ is a distorted lattice that can be described explicitly, as in~\cite{HiSjVu07}, and the cardinality of the "rational" contribution $E_r$ is $\ll$ than
that of $E_d$.

\medskip
\noindent
Subsequently, in the course of some numerical experiments, the authors have encountered peculiar pictures of the spectra of $P_{\eps}$, where the eigenvalues
had the form of a "centipede", with the body agreeing with the range of torus averages of $q$ --- see Section \ref{num} for the illustrations and the details of the numerical
computations. The legs of the centipede were more mysterious at first, but things became clearer when we realized that they represented the influence of
suitable rational tori. It became then natural to hope that the eigenvalues near the extremities of the legs could be determined asymptotically in a rigorous way,
since the pseudospectral effects should become more moderate near the edges of the spectral band,~\cite{DeSjZw}. The main result of the present work, giving a
complete asymptotic description of the individual extremal eigenvalues of $P_{\eps}$, can be considered as a justification of this hope.

%\begin{figure}
%\scalebox{0.72} {\includegraphics{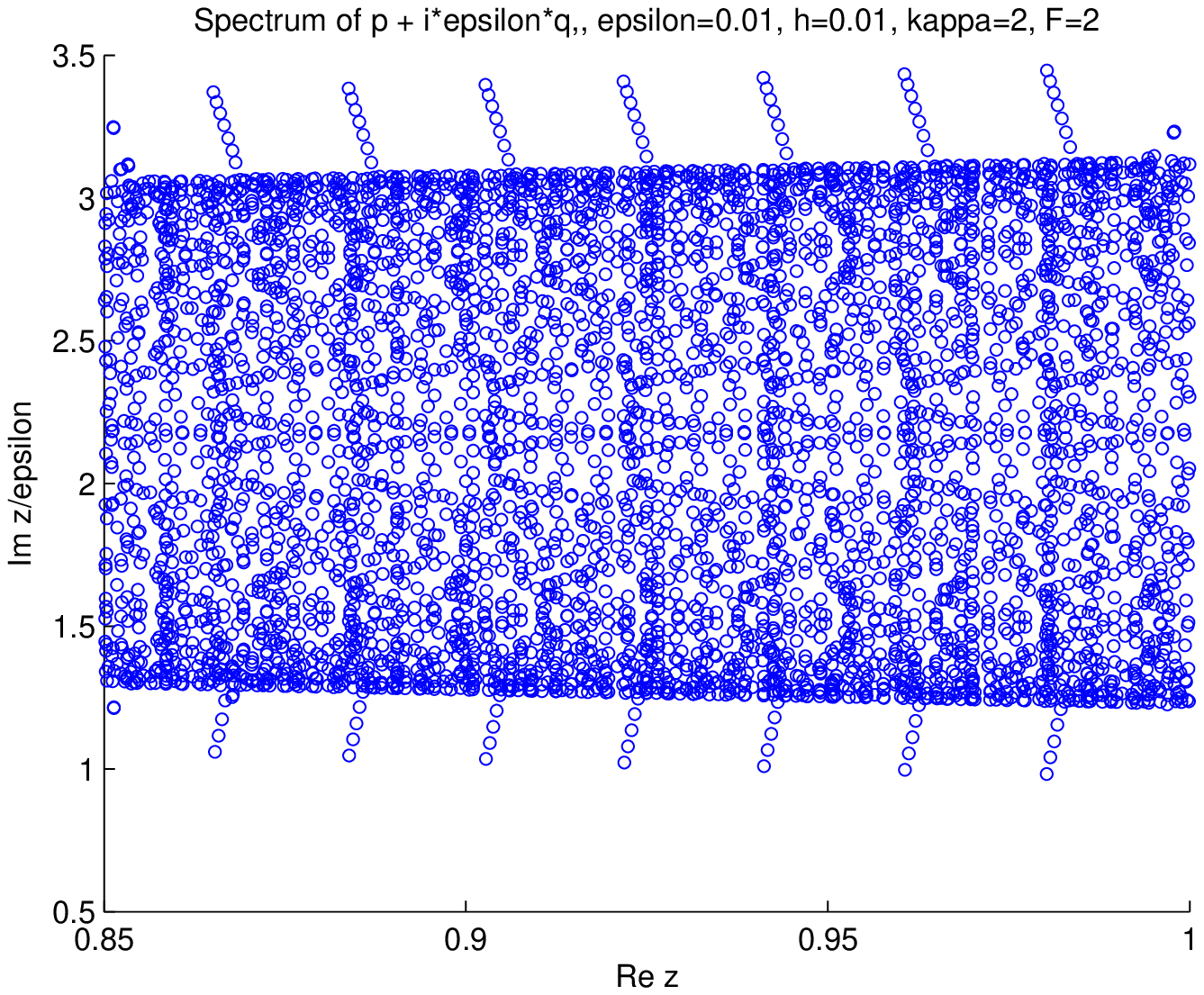}}
%\caption{Spectrum of $P_{\eps}$ in a model situation.}
%\end{figure}

\medskip
\noindent
Let us conclude the introduction by formulating, in a rough way, the main result of the paper --- see Theorem 2.1 below for the precise statement.
Let $\Lambda_0 \subset p^{-1}(0)$ be a rational Lagrangian torus such that
\begeq
\label{intro6}
\inf Q_{\infty}(\Lambda_0) < \inf_{\Lambda \neq \Lambda_0} \left(\inf Q_{\infty}(\Lambda)\right).
\endeq
The restriction of the $H_p$--flow to $\Lambda_0$ is periodic with primitive period $T_0 > 0$, and the time average $\langle{q\rangle}_{T_0}$ in (\ref{intro2})
can naturally be viewed as a function on the space of closed orbits $\Lambda_0/{\rm exp}(\real H_p)$. Let us assume that $\langle{q\rangle}_{T_0}$, viewed as a
function on $\Lambda_0/{\rm exp}(\real H_p)$, has a unique minimum which is nondegenerate, and restrict $\eps$ to a suitable interval of the form
$h^{1 + \eta} \leq \eps \leq h^{1-\eta}$, $\eta > 0$. Then for any fixed $C_0 > 0$ the eigenvalues of $P_{\eps}$ in the region
$$
\left\{z\in {\bf C};\ |\Re z|<\frac{h}{C_0\sqrt{\eps}},\ \frac{\Im z}{\eps} \leq {\rm inf}\, Q_{\infty}(\Lambda_0) + C_0\frac{h}{\sqrt{\eps}} \right \}
$$
can be determined completely, modulo ${\cal O}(h^{\infty})$, and are given by
\begeq
\label{intro7}
\lambda_{j,k} = a(\xi_2) + i\eps b(\xi_2) + \eps^{1/2} h\, \Lambda_{j,k}, \quad \xi_2 = h(j - \frac{k_0}{4}) - \frac{S}{2\pi},\,\, \quad j\in {\bf Z},\,\,
k\in \nat.
\endeq
Here $a(0) = 0$, $a'(0)>0$, $b(0) = \inf Q_{\infty}(\Lambda_0)$, and $S$ and $k_0$ are the classical action and the Maslov index of a primitive closed
$H_p$--trajectory in $\Lambda_0$. We have a complete asymptotic expansion for $\Lambda_{j,k}$ in integer powers of $\widetilde{h} = h/\sqrt{\eps}$,
$$
\Lambda_{j,k} \sim \sum_{\nu=0}^{\infty} \widetilde{h}^{\nu} \lambda_k^{\nu}(\xi_2,\sqrt{\eps}),
$$
where
$$
\lambda_k^0(0,0) = d e^{i\pi/4} (2k +1),\quad d > 0.
$$

\medskip
\noindent
{\bf Acknowledgement}. We are very grateful to Michael Hall and San V\~u Ng\d{o}c for interesting discussions concerning numerical computations of eigenvalues
and for showing us promising numerical results for a damped spherical oscillator. The second author is supported by the project NOSEVOL ANR 2011 BS 01019 01.

\section{Statement of the main results}
\label{st}
\setcounter{equation}{0}

\subsection{General assumptions}
We shall start by describing the general assumptions on our operators, which will be the same as in \cite{HiSj08b}, \cite{HiSjVu07}, as well
as in the earlier papers mentioned above. Let $M$ denote either the space ${\bf R}^2$ or a real analytic compact manifold of dimension 2.
When $M={\bf R}^2$, let
\begin{equation}
\label{st.1}
P_{\eps}=P^w(x,hD_x,\eps;h),\quad 0<h \leq 1,
\end{equation}
be the $h$--Weyl quantization on ${\bf R}^2$ of a symbol $P(x,\xi,\eps;h)$ (i.e. the Weyl quantization of $P(x,h \xi,\eps;h)$), depending smoothly on
$\eps\in {\rm neigh}(0,{\bf R})$ and taking values in the space of holomorphic functions of $(x,\xi)$ in a tubular neighborhood of ${\bf R}^4$ in ${\bf C}^4$, with
\begin{equation}
\label{st.2}
\abs{P(x,\xi,\eps;h)}\leq {\cal O}(1) m(\Re (x,\xi)),
\end{equation}
there. Here $1\leq m\in C^{\infty}(\real^4)$ is an order function, in the sense that
\begin{equation}
\label{st.3}
m(X)\leq C_0 \langle{X-Y}\rangle^{N_0} m(Y),\quad X,\,Y\in {\bf R}^4,
\end{equation}
for some $C_0$, $N_0>0$. We shall assume, as we may, that $m$ belongs to its own symbol class, so that $\partial^{\alpha}m={\cal O}_{\alpha}(m)$ for
each $\alpha\in {\bf N}^4$. Then for $h>0$ small enough and when equipped with the domain $H(m):=\left(m^w(x,hD)\right)^{-1}\left(L^2({\bf R}^2)\right)$,
$P_{\eps}$ becomes a closed densely defined operator on $L^2({\bf R}^2)$.

\medskip
\noindent
{\it Remark}. The analyticity assumptions will allow us to treat the case when $\eps \asymp h^\delta $, for $0 < \delta < 1$. When
$\eps = {\cal O}(h)$, standard $C^{\infty}$--microlocal analysis would have been sufficient.

\medskip
\noindent
Assume furthermore that
\begin{equation}
\label{st.4}
P(x,\xi,\eps;h)\sim \sum_{j=0}^{\infty} h^j p_{j,\eps}(x,\xi)
\end{equation}
in the space of holomorphic functions depending smoothly on $\eps \in {\rm neigh}(0,\real)$ and satisfying (\ref{st.2}) in a
fixed tubular neighborhood of ${\bf R}^4$. We assume that $p_{0,\eps}$ is elliptic near infinity,
\begin{equation}
\label{st.5}
\abs{p_{0,\eps}(x,\xi)}\geq \frac{1}{C} m(\Re (x,\xi)),\quad \abs{(x,\xi)}\geq C,
\end{equation}
for some $C>0$.

\bigskip
\noindent
When $M$ is a compact manifold, for simplicity we shall take $P_{\eps}$ to be a differential operator on $M$, such that for every choice of local coordinates,
centered at some point of $M$, it takes the form
\begin{equation}
\label{st.6}
P_{\eps}=\sum_{\abs{\alpha}\leq m} a_{\alpha,\eps}(x;h)(hD_x)^{\alpha},
\end{equation}
where $a_{\alpha,\eps}(x;h)$ is a smooth function of $\eps\in {\rm neigh}(0,{\bf R})$ with values in the space of bounded holomorphic functions in a complex
neighborhood of $x=0$, independent of $h$ when $\abs{\alpha} = m$. We further assume that
\begin{equation}
\label{st.7}
a_{\alpha,\eps}(x;h)\sim \sum_{j=0}^{\infty} a_{\alpha,\eps,j}(x) h^j,\quad h\rightarrow 0,
\end{equation}
in the space of such functions. The semiclassical principal symbol $p_{0,\eps}$, defined on $T^*M$, takes the form
\begin{equation}
\label{st.8}
p_{0,\eps}(x,\xi)=\sum a_{\alpha,\eps,0}(x)\xi^{\alpha},
\end{equation}
if $(x,\xi)$ are the canonical coordinates on $T^*M$. We make the ellipticity assumption,
\begin{equation}
\label{st.9}
\abs{p_{0,\eps}(x,\xi)}\geq \frac{1}{C} \langle{\xi}\rangle^m,\quad (x,\xi)\in T^*M,\quad \abs{\xi}\geq C,
\end{equation}
for some large $C>0$. Here we assume that $M$ has been equipped with some real analytic Riemannian metric so that $\abs{\xi}$ and
$\langle{\xi}\rangle=(1+\abs{\xi}^2)^{1/2}$ are well-defined.

\medskip
\noindent
Sometimes, we write $p_{\eps}$ for $p_{0,\eps}$ and simply $p$ for $p_{0,0}$. We make the assumption that
$$
P_{\eps=0}\quad \hbox{is formally selfadjoint}.
$$
In the case when $M$ is compact, we let the underlying Hilbert space be $L^2(M, \mu(dx))$ where $\mu(dx)$ is the Riemannian volume element.

\medskip
\noindent
The assumptions above imply that the spectrum of $P_{\eps}$ in a fixed neighborhood of $0\in {\bf C}$ is discrete, when $0<h\leq h_0$, $0\leq \eps\leq \eps_0$,
with $h_0>0$, $\eps_0>0$ sufficiently small. Moreover, if $z\in {\rm neigh}(0,{\bf C})$ is an eigenvalue of $P_{\eps}$ then $\Im z ={\cal O}(\epsilon)$.

\medskip
\noindent
We furthermore assume that the real energy surface $p^{-1}(0)\cap T^*M$ is connected and that
$$
dp\neq 0\quad \hbox{along}\quad p^{-1}(0)\cap T^*M.
$$

\medskip
\noindent
In what follows we shall write
\begin{equation}
\label{st.10}
p_{\eps}=p+i\eps q+{\cal O}(\eps^2),
\end{equation}
in a neighborhood of $p^{-1}(0)\cap T^*M$, and for simplicity we shall assume throughout the paper that $q$ is real valued on the real domain.
(In the general case, we should simply replace $q$ below by $\Re q$.) We let $H_p=p'_{\xi}\cdot \partial_x-p'_x\cdot \partial_{\xi}$ be the Hamilton vector
field of $p$.

\subsection{Assumptions related to the complete integrability}
As in \cite{HiSjVu07}, \cite{HiSj08b}, let us assume that there exists an analytic real valued function $f$ near $p^{-1}(0) \cap T^*M$ such that $H_pf=0$, with the
differentials $df$ and $dp$ being linearly independent on an open and dense set $\subset {\rm neigh}(p^{-1}(0)\cap T^*M, T^*M)$. For each
$E\in {\rm neigh}(0,{\bf R})$, the level sets $\Lambda_{a,E}=f^{-1}(a)\cap p^{-1}(E)\cap T^*M$ are invariant under the $H_p$--flow and form a singular foliation
of the 3-dimensional hypersurface $p^{-1}(E)\cap T^*M$. At each regular point (i.e. non-critical point for the restriction of $f$ to $p^{-1}(E)$), the leaves of
this foliation are 2-dimensional analytic Lagrangian submanifolds, and each regular leaf is a finite union of tori. In what follows we shall use the word ``leaf'' and notation
$\Lambda$ for a connected component of some $\Lambda_{a,E}$. Let $J$ be the set of all leaves in $p^{-1}(0)\cap T^*M$. Then we have a disjoint union decomposition
\begin{equation}
\label{st.11}
p^{-1}(0)\cap T^*M=\bigsqcup_{\Lambda \in J} \Lambda,
\end{equation}
where $\Lambda$ are compact connected $H_p$--invariant sets. The set $J$ has a natural structure of a graph whose edges correspond to families of regular leaves
and the set $S$ of vertices is composed of singular leaves. The union of edges $J\backslash S$ possesses a natural real analytic structure and the corresponding
tori depend analytically on $\Lambda\in J\backslash S$ with respect to that structure. See section 7 in \cite{HiSj08b} for an explicit description of the
Lagrangian foliation in the case when $M$ is an analytic surface of revolution in ${\bf R}^3$.

\medskip
\noindent
In what follows, we shall assume that the graph $J$ is finite. We shall identify each edge of $J$ analytically with a real bounded interval and this
determines a distance on $J$ in the natural way. Assume that the following continuity property holds,
\begin{eqnarray}
\label{st.11.1}
& & \wrtext{For every}\,\,\,\Lambda_0\in J\,\,\wrtext{and
every}\,\,\eps>0,\, \wrtext{there exists}\, \delta>0,\,\,\wrtext{such that if} \\ \nonumber
& & \Lambda\in J,\,\,{\rm
dist}_J(\Lambda,\Lambda_0)<\delta,\,\,\wrtext{then}\,\, \Lambda\subset
\{\rho\in p^{-1}(0)\cap T^*M;\, {\rm dist}(\rho,\Lambda_0)<\eps\}.
\end{eqnarray}

\medskip
\noindent
{\it Remark}. Let us assume that $f$ is a Morse-Bott function when restricted to $p^{-1}(0)\cap T^*M$, in the sense that the set of critical points of the
restriction of $f$ to $p^{-1}(0)\cap T^*M$ is a disjoint union of connected submanifolds, with the transversal Hessian of $f$ being nondegenerate along each of
the submanifolds. In this case, the structure of the singular leaves is known~\cite{San}. The set $J$ is then a finite connected graph and the property
(\ref{st.11.1}) holds.

\medskip
\noindent
Each torus $\Lambda\in J\backslash S$ carries real analytic coordinates $x_1$, $x_2$, identifying $\Lambda$ with ${\bf T}^2 = {\bf R}^2/2\pi {\bf Z}^2$, so that
along $\Lambda$, we have
\begin{equation}
\label{st.12}
H_p=a_1\partial_{x_1}+a_2\partial_{x_2},
\end{equation}
where $a_1$, $a_2\in {\bf R}$. The rotation number is defined as the ratio
$$
\omega(\Lambda)=[a_1:a_2]\in {\bf R} {\bf P}^1,
$$
and it depends analytically on $\Lambda\in J\backslash S$. Recall also that the leading perturbation $q$ has been introduced in (\ref{st.10}). For each torus
$\Lambda\in J\backslash S$, we define the torus average $\langle{q}\rangle(\Lambda)$ obtained by integrating $q|_{\Lambda}$ with respect to the natural smooth
measure on $\Lambda$.

\medskip
\noindent
We introduce the time averages,
\begin{equation}
\label{st.13}
\langle{q}\rangle_T=\frac{1}{T} \int_{-T/2}^{T/2} q\circ
\exp(tH_p)\,dt,\quad T>0,
\end{equation}
and consider the compact intervals $Q_{\infty}(\Lambda)\subset {\bf R}$, $\Lambda\in J$, defined as in~\cite{HiSjVu07},
\begin{equation}
\label{st.14}
Q_{\infty}(\Lambda)=\left[\lim_{T \rightarrow \infty} \inf_{\Lambda}
\langle{q}\rangle_T, \lim_{T \rightarrow \infty}
\sup_{\Lambda}\langle{q}\rangle_T\right].
\end{equation}
Notice that when $\Lambda\in J\backslash S$ and $\omega(\Lambda)\notin {\bf Q}$ then
$Q_{\infty}(\Lambda)=\{\langle{q}\rangle(\Lambda)\}$. In the rational case,
we write $\omega(\Lambda)=\frac{m}{n}$, where $m\in {\bf Z}$ and $n\in {\bf N}$ are
relatively prime, and where we may assume that $m={\cal
O}(n)$. When $k(\omega(\Lambda)):=\abs{m}+\abs{n}$ is the height of $\omega(\Lambda)$,
we recall from Proposition 7.1 in~\cite{HiSjVu07} that
\begin{equation}
\label{st.15}
Q_{\infty}(\Lambda)\subset \langle{q}\rangle(\Lambda)+{\cal
O}\left(\frac{1}{k(\omega(\Lambda))^{\infty}}\right)[-1,1].
\end{equation}

\medskip
\noindent
{\it Remark}. As $J\backslash S\ni \Lambda \rightarrow \Lambda_0\in S$, the set of all accumulation points of $\langle{q}\rangle(\Lambda)$ is contained in the
interval $Q_{\infty}(\Lambda_0)$. See the related remark in \cite{HiSj08b}, Section 2.

\medskip
\noindent
From Theorem 7.6 in~\cite{HiSjVu07} we recall that
\begin{equation}
\label{st.16}
\frac{1}{\eps}\Im \left({\rm Spec}(P_{\eps})\cap \{z; \abs{\Re z}\leq \delta\}\right)
\subset \left[\inf \bigcup_{\Lambda\in J} Q_{\infty}(\Lambda)-o(1),
\sup \bigcup_{\Lambda\in J} Q_{\infty}(\Lambda)+o(1)\right],
\end{equation}
as $(\eps, h, \delta) \rightarrow 0$.

\subsection{The main result}

Let $\Lambda_0\in J\backslash S$ be a rational invariant Lagrangian torus, so that as above, $\omega_0:=\omega(\Lambda_0)=\frac{m}{n}\in
{\bf Q}$, $m={\cal O}(n)$. Assume that the isoenergetic condition holds,
\begin{equation}
\label{st.16.5}
(d_\Lambda \omega )(\Lambda _0)\ne 0.
\end{equation}
We recall from Section 2 of \cite{HiSj08b} the behavior of the interval $Q_{\infty}(\Lambda)$ when $\Lambda\neq
\Lambda_0$ is a rational torus in a neighborhood of $\Lambda_0$. Writing $\omega(\Lambda)=\frac{p}{q}$ where $p\in {\bf Z}$ and $q\in
{\bf N}$ are relatively prime, $p={\cal O}(q)$, we get, using that
$\omega(\Lambda)\neq \omega_0$,
\begin{equation}
\label{st.17}
\abs{\omega(\Lambda)-\omega_0}\geq \frac{1}{n q}\geq \frac{1}{n k(\omega(\Lambda))},\
\end{equation}
and therefore, in view of (\ref{st.15}),
\begin{equation}
\label{st.18}
Q_{\infty}(\Lambda)\subset \langle{q}\rangle(\Lambda)+{\cal
O}({\rm dist}(\omega(\Lambda),\omega_0)^{\infty})[-1,1].
\end{equation}
This estimate is uniform in $\omega_0$ provided that we have a uniform upper bound on the height of the rotation number $\omega_0\in {\bf Q}$.

\medskip
\noindent
Let us assume that the chosen rational torus $\Lambda _0$ is such that
\begin{equation}\label{st.19}
\inf Q_\infty (\Lambda _0)<\inf _{\Lambda \in J\setminus \{\Lambda _0
  \}}\inf Q_\infty (\Lambda ).
\end{equation}
The result below remains valid with the obvious modifications, if we replace (\ref{st.19}) by the assumption
\begin{equation}
\label{st.20}
\sup Q_\infty (\Lambda _0)>\sup _{\Lambda \in J\setminus \{\Lambda _0\}}\sup Q_\infty (\Lambda ).
\end{equation}

\bigskip
\noindent
Let us choose, as we may, action-angle coordinates $(x,\xi )$ near $\Lambda_0$, so that $\Lambda _0$ is given by $\{\xi =0 \}$ in
${\bf T}_x^2\times {\bf R}_{\xi}^2$, $p=p(\xi)$, and so that
\begin{equation}
\label{st.21}
p(0)=0,\, \partial_{\xi_1}p(0)=0,\, \partial _{\xi _2}p(0) > 0,\, \partial _{\xi _1}^2p(0)\ne 0.
\end{equation}
Here the last property follows from (\ref{st.16.5}), and in order to fix the ideas, we shall assume that $\partial _{\xi _1}^2p(0)>0$. By the implicit function
theorem we have
\begeq
\label{st.21.5}
\partial _{\xi _1}p(\xi )=0 \Leftrightarrow \xi _1=f (\xi_2),
\endeq
where $f$ is an analytic function with $f(0)=0$, and we obtain an analytic family of rational Lagrangian tori $\Lambda_E \subset p^{-1}(E)$,
$E\in {\rm neigh}(0,\real)$, given by
\begeq
\label{st.21.6}
\xi_2 = \xi_2(E),\quad \xi_1 = f(\xi_2(E)).
\endeq
Here $\xi_2 = \xi_2(E)$ is the unique smooth solution of the equation $p(f(\xi_2),\xi_2) = E$, close to $0$, such that $\xi_2(0) = 0$.

\medskip
\noindent
Writing $q = q(x,\xi)$ in terms of the action-angle coordinates $(x,\xi)$, let
\begin{equation}
\label{st.22}
\langle q\rangle_2(x_1,\xi )=\frac{1}{2\pi }\int_0^{2\pi }q(x,\xi )dx_2, \quad \xi \in {\rm neigh}(0,\real^2),
\end{equation}
be the average of $q$ with respect to $x_2$.
We assume that
\begin{equation}
\label{st.23}\begin{split}
{\bf T}\ni x_1\mapsto \langle q\rangle_2(x_1,0)\hbox{ has a unique
  minimum}\\ \hbox{ which is nondegenerate.}
\end{split}
\end{equation}
In order to give an invariant description of the assumption (\ref{st.23}), let us notice that when restricted to $\Lambda_0$, the Hamilton flow of $p$ is
periodic of primitive period $T_0 > 0$ and the average $\langle{q\rangle}_2(x_1,0)$ can naturally be viewed as the flow average $\langle{q\rangle}_{T_0}$, defined
as in (\ref{st.13}), considered as a function on the space of closed $H_p$--orbits in $\Lambda_0$,
$$
\Lambda_0 /\exp(\real H_p) \simeq {\bf T}.
$$
In its invariant formulation, the assumption (\ref{st.23}) therefore states that flow average $\langle{q\rangle}_{T_0}$, viewed as a function on $\Lambda_0 /\exp(\real H_p)$, has a unique
minimum which is nondegenerate.

\medskip
\noindent
It follows from (\ref{st.23}) that the function ${\bf T}\ni x_1 \mapsto \langle{q\rangle}_2(x_1,\xi)$ has a unique minimum $x_1 = x_1(\xi)$ which is nondegenerate,
for $\xi \in {\rm neigh}(0,\real^2)$. The range of $\langle q\rangle_2(\cdot ,0)$ is equal to $Q_\infty (\Lambda _0)$, so the minimal value,
$\langle q\rangle_2(x_1(0),0) = \inf Q_\infty (\Lambda _0)$ is situated strictly below $\inf _{\Lambda \in J\setminus \{\Lambda _0 \}}\inf Q_\infty (\Lambda )$.

\medskip
\noindent
In this paper, we shall work under the assumption that the subprincipal symbol of the unperturbed operator $P_{\eps = 0}$ vanishes,
\begeq
\label{st.24}
p_{1,0}(x,\xi) = 0.
\endeq

\bigskip
\noindent
The following is the main result of this work.
\begin{theo}
\label{st1}
We adopt the assumptions above, in particular, {\rm (\ref{st.16.5})}, {\rm (\ref{st.19})}, {\rm (\ref{st.23})}, and {\rm (\ref{st.24})}. Let us put
$x_1(\xi _2)=x_1(f(\xi _2),\xi _2)$. Let $\delta \in (1/18,1/9)$ be fixed and assume that
\begeq
\label{st.25}
h^{1/(1-\delta)} \ll \eps \ll h^{6/(5+12\delta)}.
\endeq
Set
$$
\widetilde{h} = \frac{h}{\sqrt{\eps}}.
$$
Then for every $C_0>0$, we have the following description of the eigenvalues of $P_\eps$ in the region
\begeq
\label{st.25.1}
\left\{z\in {\bf C};\ |\Re z|<\frac{h}{C_0\sqrt{\eps}},\ \frac{\Im z}{\eps} \leq {\rm inf}\, Q_{\infty}(\Lambda_0) + C_0\frac{h}{\sqrt{\eps}} \right \},:
\endeq
valid for all $h>0$ small enough: the eigenvalues are simple and given by
\begin{equation}\label{st.26}
\begin{split}
\lambda _{j,k}=&p(f(h(j-\theta_2)), h(j-\theta_2)) + i\eps \langle q\rangle_2(x_1(h(j-\theta_2)), f(h(j-\theta_2)), h(j-\theta_2))\\
&+\sqrt{\eps}h(\lambda _{j,k}^0+\lambda _{j,k}^1\widetilde{h} + \lambda _{j,k}^2\widetilde{h}^2+\ldots),
\end{split}
\end{equation}
with $j\in {\bf Z},\, h(j-\theta_2) = {\cal O}(h/\sqrt{\eps}),\,\, {\bf N}\ni k\le {\cal O}(1)$, where
$\lambda_{j,k}^{\nu} = \lambda_{k}^\nu (h(j-\theta_2),\sqrt{\eps})$ is a smooth function of $\xi_2= h(j-\theta_2)\in {\rm neigh}(0,\real)$ and
$\sqrt{\eps}\in {\rm neigh}(0,\overline{\real_+})$, and
\begin{equation}
\label{st.27}
\lambda _k^0(\xi_2,0)=e^{i\pi /4}(\partial _{\xi _1}^2p(f(\xi _2),\xi
_2))^{\frac{1}{2}}(\partial _{x_1}^2\langle q\rangle_2(x_1(\xi_2), f(\xi_2),\xi_2))^{\frac{1}{2}}\left(k+\frac{1}{2}\right).
\end{equation}
Here we have written $\theta_2 = k_0(\alpha_2)/4 + S_2/2\pi h$, where $k_0(\alpha_2)$ and $S_2$ are the Maslov index and the classical action, respectively,
of the fundamental cycle in $\Lambda_0$, given by a closed $H_p$--trajectory of minimal period.
\end{theo}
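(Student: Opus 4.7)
The overall strategy is (i) to localize the spectral problem to a neighborhood of $\Lambda_0$, (ii) to perform a secular normal form reducing $P_\eps$ to a family of one-dimensional non-selfadjoint operators indexed by an $x_2$-Fourier mode, and (iii) to analyze each reduced operator as a complex harmonic oscillator near its joint minimum.

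First, I would show that an eigenfunction of $P_\eps$ corresponding to a $\lambda$ in the window (\ref{st.25.1}) must be microlocally concentrated in any prescribed neighborhood of $\Lambda_0$. This is a consequence of (\ref{st.19}) combined with (\ref{st.16}): on any $\Lambda\ne \Lambda_0$ the level $\Im \lambda/\eps$ would have to sit below $\inf Q_\infty(\Lambda)$, which is strictly above $\inf Q_\infty(\Lambda_0)$. One makes this quantitative using FBI transforms and exponentially weighted \emph{a priori} estimates, as in \cite{HiSj08b,HiSjVu07}, which allow us to replace $P_\eps$ by its restriction to a small tubular neighborhood of $\Lambda_0$ up to errors $\mathcal{O}(h^\infty)$ on the relevant spectral window. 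Using analytic action-angle coordinates $(x,\xi)$ centered on $\Lambda_0$, I would then conjugate $P_\eps$ by an analytic $h$-Fourier integral operator to a Birkhoff normal form whose Weyl symbol equals
\begin{equation*}
p(\xi) + i\eps\,\langle q\rangle_2(x_1,\xi) + r(x,\xi,\eps;h),
\end{equation*}
with $r = \mathcal{O}(\eps^2) + \mathcal{O}((\xi_1 - f(\xi_2))^N \eps)$ for large $N$, plus an $\mathcal{O}(h\eps)$ symbol that is flat in $(x_1,x_2)$ thanks to (\ref{st.24}). The averaging is secular rather than KAM: on $\Lambda_0$ the flow is periodic of primitive period $T_0$, and the average over closed orbits is precisely $\langle q\rangle_2(x_1,\xi)|_{\xi=0}$, so one does not incur small-divisor losses in the $x_2$-direction. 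Since the principal part of the normal form commutes with $hD_{x_2}$, we decompose on the Fourier side by setting $\xi_2 = h(j-\theta_2)$, with the shift $\theta_2 = k_0(\alpha_2)/4 + S_2/2\pi h$ coming from the Bohr--Sommerfeld condition along the second fundamental cycle of $\Lambda_0$.

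This decomposition yields, for each such $\xi_2$, an effective one-dimensional non-selfadjoint operator on $\T_{x_1}$ with Weyl symbol
\begin{equation*}
p(\xi_1,\xi_2) + i\eps\,\langle q\rangle_2(x_1,\xi_1,\xi_2) + \text{(lower order)},
\end{equation*}
whose real part has a non-degenerate minimum in $\xi_1$ at $\xi_1 = f(\xi_2)$ by (\ref{st.21}), and whose imaginary part has a non-degenerate minimum in $x_1$ at $x_1 = x_1(\xi_2)$ by (\ref{st.23}). Taylor expanding to second order around this joint minimum and rescaling by the effective semiclassical parameter $\widetilde h = h/\sqrt\eps$ (so that $x_1-x_1(\xi_2)$ and $\xi_1-f(\xi_2)$ both become $\mathcal{O}(\widetilde h^{1/2})$) produces a $\widetilde h$-Weyl quantization of a complex quadratic form whose real and imaginary parts are both positive definite. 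Such a form is elliptic in the sense of Melin--Sj\"ostrand, so its spectrum is a ladder
\begin{equation*}
\sqrt{\eps}\,h\, e^{i\pi/4}\sqrt{\alpha(\xi_2)\beta(\xi_2)}\,(k+\tfrac12), \quad k\in \nat,
\end{equation*}
with $\alpha = \partial^2_{\xi_1}p$, $\beta = \partial^2_{x_1}\langle q\rangle_2$, matching (\ref{st.27}). Erecting a Grushin problem around the oscillator eigenprojections and solving it iteratively in powers of $\widetilde h$ then gives the full expansion $\lambda_{j,k}^\nu(\xi_2,\sqrt\eps)$ together with simplicity of the individual eigenvalues.

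The main obstacle is quantitative control of the remainders: the normal-form error $r$, the cubic and higher Taylor corrections to the quadratic approximation, and the non-resonant averaging losses coming from nearby rational tori all have to be dominated by the harmonic oscillator spacing $\sqrt{\eps}\,h$ after the $\widetilde h$-rescaling. Balancing these competing error sources against one another is what fixes the admissible window (\ref{st.25}). A secondary difficulty is the non-selfadjoint character of the reduced operator, which requires an exponential weight / complex scaling adapted to the holomorphic extension of the symbol in order to convert the Grushin problem into a uniformly elliptic one; as observed in the introduction, this is workable precisely because we are working at the edge of the spectral band, where pseudospectral sensitivity is mild.
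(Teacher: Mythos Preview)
Your architecture matches the paper's: global weight to localize near $\Lambda_0$, secular normal form to kill the $x_2$-dependence, Fourier decomposition in $x_2$, and a harmonic-oscillator Grushin problem for each mode. But there is a genuine gap at the junction between (ii) and (iii).

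You write ``Since the principal part of the normal form commutes with $hD_{x_2}$, we decompose on the Fourier side'', and then proceed as if this were an exact direct sum. It is not: the secular normal form eliminates $x_2$ only modulo a remainder $R(x,hD_x,\eps;h) = {\cal O}(\eps^{N+1} + (\xi_1 - f(\xi_2))^N + h^{N+1})$ that still depends on $x_2$ and therefore couples different Fourier modes. To absorb $R$ when building the global resolvent you must control $(P(x_1,hD_{x_1},\xi_2,\eps;h) - z)^{-1}$ not only for the ``correct'' values $\xi_2 = h(j-\theta_2)$ with $|{\rm Re}\,z - p(f(\xi_2),\xi_2)| \leq C\sqrt{\eps}\,h$ (where your harmonic approximation applies), but for \emph{all} $\xi_2 = {\cal O}(\eps^{2\delta})$ appearing in the decomposition, including those for which $w = z - p(f(\xi_2),\xi_2)$ satisfies $h/\sqrt{\eps} \lesssim {\rm Re}\,w \lesssim \eps^{2\delta}$. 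In that regime the one-dimensional operator $g(hD)(hD)^2 + i\eps\widetilde{q} - w$ is highly non-normal and sits well inside its pseudospectrum; the harmonic approximation says nothing, and naive estimates give resolvent norms that blow up far too fast to dominate $R$.

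The paper handles this with subelliptic resolvent bounds of the form $\|(L_\eps - w)^{-1}\| = {\cal O}(h^{-2/3}\eps^{-1/3}({\rm Re}\,w)^{-1/3})$, proved in the Appendix by the method of bounded exponential weights in the spirit of H\'erau--Sj\"ostrand--Stolk. It is the competition between these polynomial resolvent bounds and the size of the normal-form remainder that produces the window $h^{1/(1-\delta)} \ll \eps \ll h^{6/(5+12\delta)}$ --- not, as you suggest, cubic Taylor errors or ``averaging losses coming from nearby rational tori''. Your final paragraph mentions exponential weights, but only in connection with a single Grushin problem; what is missing is this separate pseudospectral analysis that establishes spectral localization across the entire $\xi_2$-family before any Grushin problem is set up.
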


\medskip
\noindent
{\it Remark}. Choosing $\delta \in (1/18,1/9)$ in Theorem 2.1 to be close to $1/9$, we see from (\ref{st.25}) that the description of the eigenvalues in
Theorem \ref{st1} in the region (\ref{st.25.1}) is valid in the range
$$
h^{\frac{9}{8}-\eta} \ll \eps \ll h^{\frac{18}{19} + \eta},
$$
when $\eta > 0$ is small. In particular, we are able to reach some cases when $\eps \gg h$, and here
the analyticity assumptions seem essential.

\medskip
\noindent
{\it Remark}. The result of Theorem 2.1 admits a natural extension to the case when ${\rm Re}\, z \in {\rm neigh}(0,\real)$ varies in a sufficiently small but
fixed neighborhood of $0\in \real$. Indeed, let us recall the family of rational Lagrangian tori $\Lambda_E \subset p^{-1}(E)$, $E\in {\rm neigh}(0,\real)$,
introduced in (\ref{st.21.6}). A natural analog of the assumption (\ref{st.19}) is then valid for $\inf Q_{\infty}(\Lambda_E)$, relative to the Lagrangian foliation
in $p^{-1}(E)$, provided that $\abs{E}$ is small enough. It follows therefore from Theorem \ref{st1} that the description (\ref {st.26}) of the spectrum of $P_{\eps}$ remains valid
when
$$
\abs{{\rm Re}\, z - E}\leq \frac{h}{C_0 \sqrt{\eps}},\quad \frac{\Im z}{\eps} \leq {\rm inf}\, Q_{\infty}(\Lambda_E) + C_0\frac{h}{\sqrt{\eps}},
$$
uniformly in $E\in {\rm neigh}(0,\real)$. We conclude therefore that the result of Theorem \ref{st1} extends to the spectral region
$$
\left\{z\in {\bf C};\ |\Re z|<\frac{1}{C},\ \frac{\Im z}{\eps} \leq {\rm inf}\, Q_{\infty}(\Lambda_{{\rm Re}\, z}) +
{\cal O}\left(\frac{h}{\sqrt{\eps}}\right) \right\},
$$
for $C > 1$ large enough.

\medskip
\noindent
The plan of the paper is as follows. Section 3 is devoted to a general outline of the proof. In Section 4 we construct a global compactly supported
weight function $G$, such that the leading symbol of $P_{\eps}$, acting on the weighted space associated to $G$, becomes $\approx p + i\eps\left(q - H_p G\right)$,
with the imaginary part avoiding the value $\eps \inf Q_{\infty}(\Lambda_0)$ on $p^{-1}(0)$, away from the rational torus $\Lambda_0$. This effectively
microlocalizes the spectral problem for $P_{\eps}$ to a small neighborhood of $\Lambda_0$. The quantum normal form construction for $P_{\eps}$ in the rational
region is carried out in Section 5, using the techniques of secular perturbation theory, thereby reducing the analysis to the study of a one-parameter family of
non-selfadjoint operators in dimension one, having double characteristics with elliptic quadratic approximations. In Section 6 we recall the computation of
low-lying eigenvalues for such operators, following~\cite{HeSjSt} and~\cite{HiPrSt}, and extend the results there to the parameter-dependent case.
The final step in the proof of Theorem 2.1 is taken in Section 7, where we carry out a pseudospectral analysis for the family
of the one-dimensional operators in question, controlling the resolvent norms and obtaining the spectral localization. It then becomes possible to complete
the proof by solving a suitable globally well-posed Grushin problem for $P_{\eps}$ in a weighted space, using the ideas and techniques
of~\cite{HiSj1},~\cite{HiSjVu07}. In Section 8 we present the results of numerical computations illustrating Theorem 2.1. The Appendix establishes some
subelliptic resolvent bounds for non-selfadjoint operators of Schr\"odinger type, playing a principal role in the pseudospectral analysis of Section 7 in
the main text. These bounds seem to be of some independent interest, and their proofs are very much based on the techniques developed
in~\cite{HeSjSt},~\cite{HiPrSt}.

\section{Outline of the proof}
\setcounter{equation}{0}
In this section we shall give a general outline of the proof of Theorem 2.1. Some of the techniques come from the previous works~\cite{HiSjVu07},~\cite{HiSj08b},
and the presentation below will naturally focus on the new difficulties of pseudospectral nature, encountered in the analysis in the rational
region. We shall then also describe heuristically some of the essential ideas employed in overcoming those difficulties, referring to Section 7 and to the
Appendix for a detailed rigorous discussion.

\medskip
\noindent
The principal symbol of the operator $P_{\eps}$ in (\ref{st.1}), (\ref{st.6}) is of the form
\begeq
\label{outl1}
p_{\eps} = p + i\eps q + {\cal O}(\eps^2),
\endeq
in a neighborhood of $p^{-1}(0)\cap T^*M$, and thanks to the ellipticity assumptions (\ref{st.5}), (\ref{st.9}), we observe that it suffices to make a microlocal
study in the region where $p$ is small. Recalling the assumption (\ref{st.19}) and replacing $q$ by $q-\inf Q_{\infty}(\Lambda_0)$, in the following discussion 
we shall assume, for notational simplicity only, that $\inf Q_{\infty}(\Lambda_0) = 0$. The first step in the argument is a construction of a global weight 
function $G\in C^{\infty}_0(T^*M)$ such that away from a small neighborhood of the rational Lagrangian torus $\Lambda_0$ in $p^{-1}(0)\cap T^*M$, we have
\begeq
\label{outl2}
q - H_p G \geq \frac{1}{{\cal O}(1)}.
\endeq
Away from $\Lambda_0$, the weight $G$ satisfies
$$
H_p G = q -\langle{q\rangle}_T,
$$
where $\langle{q\rangle}_T$ has been introduced in (\ref{st.13}), and when constructing $G$ in a neighborhood of $\Lambda_0$, we introduce action-angle coordinates
$(x,\xi) \in T^*{\bf T}^2$, so that $\Lambda_0 = \{\xi = 0\} \subset T^*{\bf T}^2$, and
\begeq
\label{outl3}
p_{\eps}(x,\xi) = p(\xi) + i\eps q(x,\xi) + {\cal O}(\eps^2),
\endeq
where the frequencies $\partial_{\xi_1} p(0)$ and $\partial_{\xi_2} p(0)$ are commensurable. After a linear change of variables, we get
$\partial_{\xi_1} p(0) = 0$, and the isoenergetic condition (\ref{st.16.5}) shows that $\partial^2_{\xi_1}p(0) \neq 0$. In the following discussion, in order
to fix the ideas, we shall consider the model case $p(\xi) = \xi_2 + \xi_1^2$, which suffices to illustrate the difficulties.
The weight function $G$ near $\xi = 0$ satisfies the cohomological equation
\begeq
\label{outl4}
H_p G = q -\widetilde{q},
\endeq
modulo ${\cal O}(\xi^{\infty})$, where $\widetilde{q} = \widetilde{q}(x_1,\xi)$ is independent of $x_2$ and is such that
\begeq
\label{outl5}
\widetilde{q}(x_1,0) = \frac{1}{2\pi}\int_0^{2\pi} q(x,0)\,dx_2
\endeq
is the average of $q(x,0)$ in the $x_2$-direction. From (\ref{st.23}) we then know that $\widetilde{q}(x_1,0)\geq 0$ and that 
$x_1 \mapsto \widetilde{q}(x_1,0)$ has a unique minimum which is nondegenerate. The partial Birkhoff normal form construction, utilized in solving (\ref{outl4}) 
may be continued, first at the principal symbol level, and then
on the level of operators, leading to the conclusion that microlocally in the rational region, when acting on an exponentially weighted space, the operator
$P_{\eps}$ is unitarily equivalent to an operator of the form
\begeq
\label{outl6}
P(x_1,hD_x,\eps;h) + R(x,hD_x,\eps;h): L^2({\bf T}^2) \rightarrow L^2({\bf T}^2).
\endeq
We refer to Proposition 7.1 in Section 7 for the precise statement. Here the full symbol of $P(x_1,hD_{x_1},\eps;h)$ is independent of $x_2$ and is given by
\begeq
\label{outl7}
P(x_1,\xi,\eps;h) = p(\xi) + i\eps \widetilde{q}(x_1,\xi) + {\cal O}(\eps^2 + h^2).
\endeq
The contribution $R(x,\xi,\eps;h) = {\cal O}((\eps,\xi,h)^{\infty})$ in (\ref{outl6}) is a remainder, which becomes ${\cal O}(h^{\infty})$ when restricting
the attention to the region $\xi = {\cal O}(\eps^{\delta})$, for a suitable small fixed $\delta > 0$ --- as we shall see, understanding this region suffices
for the description of the eigenvalues in Theorem 2.1. In particular, since $\xi$ becomes small, in the following heuristic discussion, we shall make a 
simplification and assume that $\widetilde{q}$ in (\ref{outl7}) is independent of $\xi$ altogether, depending on $x_1$ only. Let us also suppress the 
error term ${\cal O}(\eps^2 + h^2)$ in (\ref{outl7}), for simplicity. When considering it in Section 7, it will be treated entirely as a perturbation. 

\medskip
\noindent
Taking a Fourier series decomposition in $x_2$, we may view the operator $P$ in (\ref{outl6}) as a one-parameter family of operators
$P(x_1,hD_{x_1},\xi_2,\eps;h) = P(\xi_2)$, acting on $L^2({\bf T})$, such that
\begeq
\label{outl8}
P(\xi_2) = \xi_2 + L_{\eps}, \quad \xi_2 = hj,\quad j\in \z,
\endeq
where 
\begeq
\label{outl9}
L_{\eps} = (hD_{x_1})^2 + i\eps \widetilde{q}(x_1),\quad \widetilde{q}\geq 0, 
\endeq
is a one-dimensional non-selfadjoint Schr\"odinger operator with $\eps \widetilde{q}$ as a potential. We are interested in the spectrum of the family (\ref{outl8}) 
in the region where ${\rm Re}\, z$ is small and $\abs{{\rm Im}\, z} \leq {\cal O}(h\sqrt{\eps})$, and the first observation 
is that the eigenvalues of the operator 
$$
\frac{1}{\eps} L_{\eps} = \left(\widetilde{h}D_{x_1}\right)^2 + i\widetilde{q}(x_1), \quad 
\widetilde{h} = \frac{h}{\sqrt{\eps}},
$$
can be determined asymptotically in any disc $\abs{w} < C\widetilde{h}$, by means of the harmonic approximation, 
provided that $\widetilde{h} \ll 1$. See~\cite{HeSjSt},~\cite{HiPrSt}, and the discussion in Section 6 below. The eigenvalues of $\eps^{-1}L_{\eps}$ in this region 
are of the form 
\begeq
\label{outl10}
\mu_k(\widetilde{h}) \sim \sum_{j=0}^{\infty} \mu_{k,j} \widetilde{h}^{j+1},\quad k\in \nat,
\endeq
where 
\begeq
\label{outl10.1}
\mu_{k,0} = \left(2\partial_{x_1}^2 \widetilde{q}(x_1^{\rm min})\right)^{1/2} e^{i\pi/4} \left(k + \frac{1}{2}\right),
\endeq
are the eigenvalues of the globally elliptic quadratic operator 
$$
D_y^2 + \frac{1}{2} \left(\partial_{x_1}^2 \widetilde{q}(x_1^{\rm min})\right)y^2 
$$
acting on $L^2(\real)$. Here $x_1^{\rm min}\in {\bf T}$ is the unique point such that $\widetilde{q}(x_1^{\rm min}) = 0$. The corresponding 
eigenvalues of $P(\xi_2)$ in (\ref{outl8}) are given by $\xi_2 + \eps \mu_k(\widetilde{h})$, and from~\cite{HeSjSt},~\cite{HiPrSt} we also know that 
\begeq
\label{outl11}
\norm{(P(\xi_2) - z)^{-1}}_{{\cal L}(L^2,L^2)} \leq {\cal O}\left(\frac{1}{\sqrt{\eps} h}\right),
\endeq
provided that $\abs{z-\xi_2}\leq Ch\sqrt{\eps}$ and that $(z-\xi_2)/h\sqrt{\eps}$ avoids the quadratic eigenvalues $\mu_{k,0}$ in (\ref{outl10.1}). 

\bigskip
\noindent
Now (\ref{outl8}) is only an approximate direct sum decomposition, and in order to be able to absorb the error terms there, when constructing the resolvent of $P_{\eps}$
globally, it is of crucial importance to control the resolvent norms of $L_{\eps}$ also near the interval $[Ch\sqrt{\eps}, 1/{\cal O}(1))$. To get such a control,
since the spectral parameter remains close to the boundary of the range of the symbol of $L_{\eps}$, we apply the method of "bounded exponential weights", which 
in effect consists of replacing $L_{\eps}$ by a new operator for which the infimum of the imaginary part is increased in the non-elliptic region for 
${\rm Re}\left(L_{\eps}-\omega\right)$. This method has been carried out in closely related situations in~\cite{DeSjZw},~\cite{HeSjSt},~\cite{HiPrSt}, and we 
apply some of those works in the actual proof in the Appendix. Here we shall merely recall the essential ideas. See also~\cite{LeLe},~\cite{PrSt1}.  

\medskip
\noindent
Let $G(x_1,\xi_1)\in C^{\infty}$ be real-valued and odd in $\xi_1$. Let us consider formally the conjugated operator 
$$
\widetilde{L}_{\eps} = e^{-\eps G(x_1,hD_{x_1})/h}\circ L_{\eps} \circ e^{\eps G(x_1,hD_{x_1})/h},
$$
acting on $L^2$, or equivalently, the operator $L_{\eps}$ acting on the weighted Hilbert space $e^{\eps G(x_1,hD_{x_1})/h}L^2$. We want this space to be equal to 
$L^2$, with its norm 
$$
\norm{e^{-\eps G(x_1,hD_{x_1})/h}u}_{L^2}
$$
uniformly equivalent to the standard $L^2$-norm. This is the case if the weight function $G$ satisfies suitable symbol estimates and has the fundamental property
\begeq
\label{outl12}
\frac{\eps G(x_1,\xi_1)}{h} = {\cal O}(1),
\endeq
uniformly with respect to the various parameters involved. 

\medskip
\noindent
We view $e^{\eps G({x_1},hD_{x_1})/h}$ as a Fourier integral operator with the associated canonical transformation $\exp (i\eps H_G)$, approximately
equal to $({x_1},{\xi_1} )\mapsto ({x_1},{\xi_1} )+i\eps H_G ({x_1},{\xi_1} ) $, since $\eps$ will be small. 
Here $H_G=G'_{\xi_1} \cdot \partial _{x_1}-G'_{x_1}\cdot \partial _{\xi_1} $ is the Hamilton vector field of $G$. By Egorov's theorem we expect
$\widetilde{L}_\eps$ to be an $h$-pseudodifferential operator with the symbol
\[
\begin{split}
\widetilde{L}_\eps ({x_1},{\xi_1} )\approx L_\eps \left(\exp
  \left(i\eps H_G(x_1,\xi _1)\right) \right)
\approx L_\eps\left(\left({x_1},{\xi_1}\right) +i\eps
  H_G(L_\eps)(x_1,\xi _1)\right)
\\
\approx L_\eps ({x_1},{\xi_1})-i\eps H_{{L_\eps}}(G).
\end{split}
\]
Here $L_\eps({x_1},{\xi_1} ) = \xi_1^2+i\eps\widetilde{q}({x_1})$ is the symbol of $L_\eps$ in (\ref{outl9}). With $\ell({x_1},{\xi_1} )={\xi_1}^2$, we get
$$
\widetilde{L}_\eps ({x_1},{\xi_1} )\approx {\xi_1} ^2+i\eps (\widetilde{q}-H_\ell (G))({x_1},{\xi_1} )=:{\xi_1} ^2+i\eps \widehat{q}({x_1},{\xi_1}).
$$
When considering $\widetilde{L}_\eps -\omega $ for ${\rm Re}\, \omega \geq h \sqrt{\eps}$, the most critical region is the one where
${\xi_1} ^2\approx \Re \omega $ and it is here that we want to increase $\inf_{x_1}\widetilde{q}$ as much as possible. Naturally, that will not be enough for 
the complete analysis, but in the following heuristic discussion, we shall restrict the attention to the region where $\xi_1^2=\Re \omega$. Here, we get
\[
\begin{split}
\widehat{q}({x_1},{\xi_1})=&\widetilde{q}({x_1})-2 \xi_1 \partial_{x_1} G({x_1},{\xi_1} )\\
=&\widetilde{q}({x_1})-2\sqrt{\Re\omega }\partial _{x_1} G({x_1},(\Re \omega)^{1/2} ),
\end{split}
\]
where we recall that $G$ is odd in ${\xi_1} $, so that $\widehat{q}$ is even in the same variable. Then 
$$
\partial_{x_1}G({x_1})=\frac{\widetilde{q}({x_1})-\widehat{q}({x_1})}{2\sqrt{\Re \omega}},
$$
omitting ${\xi_1} =\sqrt{\Re \omega }$ in the argument of $G$. We want
\begin{equation}
\label{outl13}
\inf_{x_1} \widehat{q}-\inf_{x_1}\widetilde{q}\asymp \gamma ^2,
\end{equation}
for a suitable small parameter $\gamma$, that we wish to have as large as possible, and to achieve this, we clearly have to modify 
$\widetilde{q}$ in a $\gamma $-neighborhood of $x_1^{\rm min}$. Since we also wish $\abs{G}$ to be as small as possible, we require
$$
\mathrm{supp\,}G\subset [x_1^{\rm min}-\gamma,x_1^{\rm min}+\gamma ],
$$
and it is not hard to see that we can find such a $G$ with
$$
\partial _{x_1}G={\cal O}\left(\frac{\gamma ^2 }{\sqrt{\Re \omega }} \right),\quad G={\cal O}\left(\frac{\gamma ^3 }{\sqrt{\Re \omega }}\right) 
$$
The condition (\ref{outl12}) is fulfilled, provided that
\begeq
\label{outl14}
\frac{\eps \gamma ^3}{h\sqrt{{\rm Re}\, \omega }}={\cal O}(1) \Longleftrightarrow 
\gamma ={\cal O}(1)\frac{h^{\frac{1}{3}}({\rm Re}\,\omega)^{\frac{1}{6}}}{\eps^{\frac{1}{3}}}.
\endeq
Let $C\gg 1$ and let us choose
\begin{equation}
\label{outl15}
\gamma =\frac{1}{C}\min \left( 1,\frac{h^{\frac{1}{3}}({\rm Re}\, \omega )^{\frac{1}{6}}}{\eps^{\frac{1}{3}}} \right). 
\end{equation}
It follows from the heuristic discussion above that in the region where $h\sqrt{\eps}\leq {\rm Re}\,\omega \leq 1/{\cal O}(1)$ we obtain the spectral gain 
\begin{equation}
\label{outl16}
\frac{\eps \gamma ^2}{{\cal O}(1)}\asymp \min \left(\eps,h^{\frac{2}{3}}(\Re \omega )^{\frac{1}{3}}\eps^{\frac{1}{3}} \right) \geq h\sqrt{\eps},
\end{equation}
in the sense that the resolvent $(L_{\eps} - \omega)^{-1}$ is well defined in the region 
$$
h\sqrt{\eps} \le \Re \omega \le \frac{1}{{\cal O}(1)},\ \ \Im \omega \le \frac{1}{C}
\min \left( h^{\frac{2}{3}}(\Re \omega )^{\frac{1}{3}}\eps^{\frac{1}{3}},\eps \right)
$$
and that in such a region, we have 
\begin{equation}
\label{outl18}
\norm{(L_\eps-\omega)^{-1}}_{{\cal L}(L^2,L^2)} \le \frac{{\cal
      O}(1)}{\min \left( h^{\frac{2}{3}}\eps^{\frac{1}{3}}(\Re \omega )^{\frac{1}{3}},\eps \right)}.
\end{equation}
The resolvent estimates such as (\ref{outl18}) are established in the Appendix, using the machinery of bounded exponential weights and relying on the 
techniques of~\cite{HeSjSt},~\cite{HiPrSt} --- see Propositions A.2 and A.4 there, in particular. With the bounds (\ref{outl18}) available, we get 
the corresponding pseudospectral control over the family $P(x_1,hD_{x_1},\xi_2,\eps;h)$ in (\ref{outl8}), in the region where
$\abs{{\rm Re}\,z - \xi_2}\geq C h\sqrt{\eps}$, ${\rm Im}\, z \leq {\cal O}(h\sqrt{\eps})$, and this allows us, eventually, to construct the resolvent of 
$P_{\eps}$ globally in this region. We therefore obtain some crucial spectral localization, making it possible to carry out the spectral analysis of 
$P_{\eps}$ working with one quantum number $\xi_2=hj$ at a time, roughly speaking. A globally well-posed Grushin problem for $P_{\eps}$ is finally built 
from the corresponding one-dimensional Grushin problems for the operator $L_{\eps}$ in (\ref{outl9}), and solving it along the same lines as 
in~\cite{HiSj1},~\cite{HiSjVu07},~\cite{HeSjSt}, we complete the proof of Theorem 2.1. 

\medskip
\noindent
{\it Remark}. Our heuristic arguments seem to indicate that the optimal range for the perturbation parameter $\eps$ could be 
\begeq
\label{outl19}
h^2 \ll \eps \ll h^{2/3},
\endeq
as we need $\widetilde{h} = h/\sqrt{\eps} \ll 1$ and $\eps \ll \widetilde{h}$. Due to many technicalities, we get a smaller range of values 
around $\eps \approx h$, and leave extension to the range (\ref{outl19}) as an open problem for future works.

\section{Secular reduction and the global weight}
\label{sec}
\setcounter{equation}{0}
The purpose of this section is to construct a globally defined compactly supported weight function, which will allow us to microlocalize the spectral problem
for $P_{\eps}$ to a small neighborhood of the rational torus $\Lambda_0$. In doing so, we shall proceed similarly to~\cite{HiSjVu07}, with the essential difference
that when working near the torus, the basic cohomological equation will have quite different properties, compared to the Diophantine analysis of~\cite{HiSjVu07},
and will be treated using the secular perturbation theory, see~\cite{LcLb},~\cite{HiSj08b}.

\medskip
\noindent
Let us keep all the assumptions of Section 2 and consider the operator $P_{\eps}$ with the leading symbol $p_{\eps}$ in (\ref{st.10}), in a neighborhood of
$p^{-1}(0)\cap T^*M$. Let
\begeq
\label{eq3.1}
\kappa_0: {\rm neigh}(\Lambda_0, T^*M) \rightarrow {\rm neigh}(\xi = 0, T^*{\bf T}^2),
\endeq
be a real analytic canonical transformation, given by the action-angle variables, such that the properties (\ref{st.21}) hold. By Taylor expansion and
(\ref{st.21}), we have
\begeq
\label{eq3.2}
p(\xi) = p(f(\xi_2),\xi_2) + g(\xi) \left(\xi_1 - f(\xi_2)\right)^2, \quad g(0) > 0,
\endeq
where $f$ is the analytic function introduced in (\ref{st.21.5}).

\medskip
\noindent
Implementing $\kappa_0$ in (\ref{eq3.1}) by means of a microlocally unitary multi-valued $h$--Fourier integral operator with a real phase, as explained in
Theorem 2.4 in~\cite{HiSj1} and conjugating $P_{\eps}$ by this operator, we obtain a new $h$-pseudodifferential operator, still denoted by $P_{\eps}$, defined
microlocally near $\xi=0$ in $T^*{\bf T}^2$. The full symbol of $P_{\eps}$ is holomorphic in a fixed complex \neigh{} of $\xi=0$, and the leading symbol is given by
\begeq
\label{eq3.3}
p_{\eps}(x,\xi)=p(\xi)+i\eps q(x,\xi)+{\cal O}(\eps^2),
\endeq
with $p(\xi)$ of the form (\ref{eq3.2}). The function $q$ in (\ref{eq3.3}) is real on the real domain. On the operator level, $P_{\eps}$ acts on the space of
microlocally defined Floquet periodic functions on ${\bf T}^2$, $L^2_{\theta}({\bf T}^2)\subset L^2_{{\rm loc}}(\real^2)$, elements $u$ of which satisfy
\begeq
\label{eq3.4}
u(x-\nu)=e^{i\theta\cdot \nu} u(x),\quad \theta=\frac{S}{2\pi h}+\frac{k_0}{4},\quad \nu\in 2\pi \z^2.
\endeq
Here $S=(S_1,S_2)$ is given by the classical actions,
$$
S_j=\int_{\alpha_j}\eta\,dy,\quad j=1,2,
$$
with $\alpha_j$ forming a system of fundamental cycles in
$\Lambda_{0}$, such that
$$
\kappa_0(\alpha_j)=\beta_j,\quad j=1,2, \quad \beta_j=\{x\in {\bf T}^2;\, x_{3-j}=0\}.
$$
The tuple $k_0=(k_0(\alpha_1),k_0(\alpha_2))\in \z^2$ stands for the Maslov indices of the cycles $\alpha_j$, $j=1,2$.

\medskip
\noindent
{\it Remark}. Using (\ref{eq3.2}), we see, using the implicit function theorem, that the energy surface $p(\xi) = E$, for $E\in {\rm neigh}(0,\real)$,
is given by
\begeq
\label{eq3.41}
\xi_2 + \ell(\xi_1,E)=0,
\endeq
where $\ell$ is analytic with $\ell(\xi_1,0) \sim \xi_1^2$, $\ell'_E(0,0)<0$.

\bigskip
\noindent
Working near the zero section $\xi=0$ in $T^*{\bf T}^2$ and following the method of normal forms~\cite{HiSjVu07},~\cite{HiSj08b}, we shall now discuss the
cohomological equation
\begin{equation}
\label{eq3.5}
H_p G=q-\widetilde{q},
\end{equation}
where we want the remainder $\widetilde{q}$ to be simpler than $q$. Here we have
$$
H_p=p'_\xi \cdot \partial_x,
$$
and thus, (\ref{eq3.5}) can be written more explicitly as follows,
$$
\partial _{\xi _2}p(\xi )\partial _{x_2}G+\partial _{\xi _1}p(\xi )\partial _{x_1}G =q-\widetilde{q}.
$$
To simplify, we divide this equation by $\partial _{\xi_2}p$. Writing $u=G$, $v=(\partial _{\xi _2}p)^{-1}q$,
$\widetilde{v}=(\partial _{\xi _2}p)^{-1}\widetilde{q}$, we get
\begin{equation}
\label{eq3.6}
(\partial_{x_2}+a(\xi )\partial _{x_1})u=v-\widetilde{v},
\end{equation}
where $a(\xi)=\partial_{\xi _1}p(\xi )/\partial _{\xi _2}p(\xi)$. To simplify further, we replace the variables $\xi$ by
\begin{equation}
\label{eq3.7}
\eta = (\eta_1,\eta_2)=(\xi_1-f(\xi_2),\xi _2),
\end{equation}
and write, abusing the notation slightly, $u=u(x,\eta)$, $v=v(x,\eta)$, $\widetilde{v}=\widetilde{v}(x,\eta )$. It follows from (\ref{eq3.2}) that the Taylor
expansion of $a$ has the form,
\begin{equation}
\label{eq3.8}
a(\eta )=a_1(\eta_2)\eta_1+a_2(\eta _2)\eta _1^2+\ldots,\ a_1(0)\ne 0,
\end{equation}
and let us Taylor expand $u$, $v$ and $\widetilde{v}$ similarly,
\begin{equation}
\label{eq3.9}
\begin{split}
u(x,\eta )&=\sum_{k=0}^\infty u_k(x,\eta_2)\eta_1^k,\\
v(x,\eta )&=\sum_{k=0}^\infty v_k(x,\eta_2)\eta_1^k,\\
\widetilde{v}(x,\eta)&=\sum_{k=0}^\infty \widetilde{v}_k(x,\eta_2)\eta_1^k.
\end{split}
\end{equation}
Inserting these equations into (\ref{eq3.6}) and identifying the powers of $\eta_1$, we get
\begin{equation}
\label{eq3.10}
\partial_{x_2}u_0=v_0-\widetilde{v}_0,
\end{equation}
\begin{equation}
\label{eq3.11}
\partial _{x_2}u_1+a_1\partial _{x_1}u_0=v_1-\widetilde{v}_1,
\end{equation}
\begin{equation}
\label{eq3.12}
\partial _{x_2}u_2+a_1\partial _{x_1}u_1+a_2\partial _{x_1}u_0=v_2-\widetilde{v}_2,
\end{equation}
and so on. The general equation is of the form
\begin{equation}
\label{eq3.13}
\partial _{x_2}u_k+a_1\partial _{x_1}u_{k-1}+a_2\partial _{x_1}u_{k-2}+ \ldots + a_k\partial _{x_1}u_0=v_k-\widetilde{v}_k.
\end{equation}
The parameter $\eta_2$ plays no essential role here and we sometimes suppress it from the notation. For a function $u$ on the torus ${\bf T}^2$, we
introduce its averages in $x_k$, $k=1,2$, and its total average by
\[\begin{split}
&\langle u\rangle_k(x_{3-k})=\frac{1}{2\pi }\int_0^{2\pi }u(x_1,x_2)dx_k,\\
&\langle \langle u\rangle\rangle =\langle \langle
u\rangle_1\rangle_2=\frac{1}{(2\pi )^2}\iint _{{\bf T}^2}u(x_1,x_2)dx_1dx_2.
\end{split}
\]

\begin{prop}
\label{sec1}
Let $v_0,\, v_1,\ldots \in C^\infty ({\bf T}^2)$ be smooth functions on ${\bf T}^2$. A necessary and sufficient condition on the smooth functions
$\widetilde{v}_0,\, ,\widetilde{v}_1,\ldots \in C^\infty ({\bf T}^2)$ for the existence of $u_0,\, u_1,\ldots \in C^\infty ({\bf T}^2)$ solving
{\rm (\ref{eq3.10})} and {\rm (\ref{eq3.13})} for $k\ge 1$, is that
\begin{equation}
\label{eq3.14}
\begin{split}&\langle \widetilde{v}_0\rangle_2=\langle v_0\rangle_2,\\
&\langle \langle \widetilde{v}_k\rangle\rangle = \langle \langle
v_k\rangle\rangle,\ k\ge 1.
\end{split}
\end{equation}
\end{prop}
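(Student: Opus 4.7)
The plan is to view (\ref{eq3.10}) and (\ref{eq3.13}) as a triangular system of transport equations on ${\bf T}^2$, and to solve it inductively in $k$ using the basic fact that on ${\bf T}^2$ the equation $\partial_{x_2} w = f$ admits a smooth solution $w$ if and only if $\langle f\rangle_2=0$, in which case $w$ is unique up to an additive function of $x_1$ alone. Throughout, the coefficients $a_j=a_j(\eta_2)$ depend only on the parameter $\eta_2$, which is what keeps the averaging arguments clean.

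For necessity, integrating (\ref{eq3.10}) in $x_2$ gives $\langle \widetilde{v}_0\rangle_2=\langle v_0\rangle_2$ at once. For $k\geq 1$, integrating (\ref{eq3.13}) in $x_2$ and using that $a_j$ is independent of $x$ yields
\[
\sum_{j=1}^k a_j\,\partial_{x_1}\langle u_{k-j}\rangle_2 \;=\; \langle v_k\rangle_2-\langle\widetilde{v}_k\rangle_2,
\]
and averaging once more in $x_1$ kills the left-hand side (each $\partial_{x_1}\langle u_{k-j}\rangle_2$ is a derivative of a periodic function), proving $\langle\langle\widetilde{v}_k\rangle\rangle=\langle\langle v_k\rangle\rangle$.

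For sufficiency I would argue by induction on $k$ with the hypothesis that $u_0,\ldots,u_{k-1}$ have been constructed solving (\ref{eq3.10}) and (\ref{eq3.13}) up to order $k-1$, the $x_2$-averages $\langle u_0\rangle_2,\ldots,\langle u_{k-2}\rangle_2$ already pinned down while $\langle u_{k-1}\rangle_2$, a function of $x_1$ alone, remains free. The base case $k=0$ uses $\langle\widetilde{v}_0\rangle_2=\langle v_0\rangle_2$ to solve (\ref{eq3.10}), with $\langle u_0\rangle_2$ free. For the inductive step, the $x_2$-average of (\ref{eq3.13}) turns into
\[
a_1(\eta_2)\,\partial_{x_1}\langle u_{k-1}\rangle_2 \;=\; \langle v_k\rangle_2-\langle\widetilde{v}_k\rangle_2 - \sum_{j=2}^{k} a_j\,\partial_{x_1}\langle u_{k-j}\rangle_2,
\]
whose right-hand side involves only already-fixed quantities. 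Since $a_1(\eta_2)\neq 0$ by (\ref{eq3.8}), this is a first-order ODE on ${\bf T}_{x_1}$ for $\langle u_{k-1}\rangle_2$, solvable iff the $x_1$-average of the right-hand side vanishes; the $\partial_{x_1}$-sum contributes zero, so the condition reduces exactly to $\langle\langle v_k\rangle\rangle=\langle\langle\widetilde{v}_k\rangle\rangle$, which is our hypothesis. Once $\langle u_{k-1}\rangle_2$ is fixed up to a constant of integration, the full $x_2$-average of the right-hand side of (\ref{eq3.13}) vanishes, so $u_k$ itself can be constructed from the transport equation, and $\langle u_k\rangle_2$ is left free for the next step.

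The main obstacle is organizational rather than analytic: one must track precisely which $x_2$-average remains free at each stage of the recursion and use it exactly once, to absorb the leading correction from $a_1\partial_{x_1}u_{k-1}$ in the next cohomological equation. The higher terms $a_j\partial_{x_1}u_{k-j}$ with $j\geq 2$ contribute only $x_1$-derivatives of periodic functions and hence have zero total average, so they never produce a genuine solvability obstruction; all the content is concentrated in the two conditions of (\ref{eq3.14}).
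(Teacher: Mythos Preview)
Your proof is correct and follows essentially the same approach as the paper's. Both arguments prove necessity by averaging, and for sufficiency proceed inductively, using at each step the freedom in the $x_2$-average of $u_{k-1}$ (what the paper writes as an additive function $f_{k-1}(x_1)$) to arrange that the right-hand side of the next equation has vanishing $x_2$-mean; your explicit invocation of $a_1(\eta_2)\neq 0$ from (\ref{eq3.8}) matches the paper's implicit use of the same fact when solving $\langle w_k\rangle_2 = a_1\partial_{x_1} f_{k-1}$.
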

\begin{proof}
The necessity of (\ref{eq3.14}) follows from taking the $x_2$-mean of (\ref{eq3.10}) and the total mean of (\ref{eq3.13}).

\medskip
\noindent
Assume that the first equation in (\ref{eq3.14}) holds so that $\langle v_0-\widetilde{v}_0\rangle_2=0$. Then (\ref{eq3.10}) has a solution
$u_0=u_0^0\in C^\infty ({\bf T}^2)$, given by
\begin{equation}
\label{eq3.15}
u_0^0(x)=\int_0^{x_2} (v_0-\widetilde{v}_0)(x_1,t)dt.
\end{equation}
The general solution of (\ref{eq3.10}) is of the form $u_0^0(x)+f_0(x_1)$, where $f_0(x_1)$ is any smooth periodic function.

\medskip
\noindent
We next consider (\ref{eq3.11}) (i.e. (\ref{eq3.13}) with $k=1$), which we write as
\begin{equation}
\label{eq3.16}
\partial _{x_2}u_1=v_1-\widetilde{v}_1-a_1\partial_{x_1}u_0^0-a_1\partial _{x_1}f_0(x_1).
\end{equation}
Here the total average of $v_1-\widetilde{v}_1-a_1\partial_{x_1}u_0^0$ vanishes,
$$
\langle \langle v_1-\widetilde{v}_1-a_1\partial_{x_1}u_0^0 \rangle_2\rangle_1=0,
$$
and hence we can find a periodic smooth function $f_0(x_1)$, unique up to a constant, such that
$$
\langle v_1-\widetilde{v}_1-a_1\partial_{x_1}u_0^0 \rangle_2 -a_1\partial _{x_1}f_0(x_1)=0.
$$
Equivalently,
$$
\langle v_1-\widetilde{v}_1-a_1\partial_{x_1}u_0^0 -a_1\partial _{x_1}f_0(x_1)\rangle_2 =0,
$$
and we can therefore find a solution $u_1^0\in C^\infty ({\bf T}^2)$ to (\ref{eq3.16}), and hence to (\ref{eq3.11}).

\medskip
\noindent
Assume by induction that we have found $u_0,\, u_1,\ldots\, u_{k-1}$, solving (\ref{eq3.10}) and $(\ref{eq3.13})$, with $k$ there replaced by
$j=1,2,\ldots,k-1$. We notice that the general solution of (\ref{eq3.13}) with $k$ replaced by $k-1$, is of the form $u_{k-1}=u_{k-1}^0+f_{k-1}(x_1)$ for any
smooth periodic function $f_{k-1}$. We rewrite (\ref{eq3.13}) as
\begin{equation}
\label{eq3.17}
\partial _{x_2}u_k=w_k-a_1\partial _{x_1}f_{k-1}(x_1),
\end{equation}
where,
$$
w_k=v_k-\widetilde{v}_k-a_1\partial _{x_1}u_{k-1}^0-a_2\partial_{x_1}u_{k-2}-...-a_k\partial _{x_1}u_0,
$$
and we notice that $\langle \langle w_k\rangle_2\rangle_1=\langle\langle w_k\rangle\rangle=0$. Choose $f_{k-1}$ such that $\langle
w_k\rangle_2=a_1\partial _{x_1}f_{k-1}(x_1)$, or equivalently, so that $\langle w_k-a_1\partial _{x_1}f_{k-1}\rangle_2=0$. Then there is a smooth
periodic solution $u_k=u_k^0$ to (\ref{eq3.17}) and hence to (\ref{eq3.13}).
\end{proof}

\bigskip
\noindent
An application of Proposition \ref{sec1} allows us to conclude that for any fixed $N\in \nat$, there exists an analytic function $G_0$, defined in a fixed
neighborhood of $\xi = 0$, such that
\begeq
\label{eq3.17.1}
H_p G_0 = q - \widetilde{q} + {\cal O}((\xi_1-f(\xi _2))^N),
\endeq
for any analytic periodic function $\widetilde{q}$ which satisfies,
\begin{equation}
\label{eq3.18}
\langle \widetilde{q}(\cdot,\xi )\rangle_2=\langle q(\cdot ,\xi)\rangle_2,\hbox{ when }\xi _1=f(\xi _2)
\end{equation}
and
\begin{equation}
\label{eq3.19}
\langle \langle \widetilde{q}(\cdot ,\xi )\rangle\rangle= \langle
\langle q(\cdot ,\xi )\rangle\rangle,\ \xi \in \mathrm{neigh\,}(0,\real^2).
\end{equation}
The following choice is convenient and will be made in what follows: let $\chi: \real \rightarrow \real$ be real analytic such that $\chi(0)=0$.
We can then take
\begeq
\label{eq3.20}
\widetilde{q}(x_1,\xi)=(1-\chi (\xi _1-f(\xi _2)))\langle q\rangle_2(x_1,\xi)+\chi (\xi _1-f(\xi _2))\langle \langle q(\cdot,\xi)\rangle\rangle,
\endeq
which is independent of $x_2$.

\medskip
\noindent
Let us now restrict the attention to the energy surface $p^{-1}(0)$. According to (\ref{eq3.41}), we have $p(\xi) = 0 \Leftrightarrow \xi_2 + \ell(\xi_1,0) = 0$,
$\ell(\xi_1,0) \sim \xi_1^2$. It follows from (\ref{eq3.20}) that when $p(\xi) = 0$, we may write
$$
\inf_{x_1}\widetilde{q}(x_1,\xi) = \left(1-\psi(\xi_1)\right) k(\xi_1) + \psi(\xi_1) g(\xi_1),
$$
where
$$
k(\xi_1) = \inf_{x_1} \langle{q\rangle}_2(x_1,\xi_1,-\ell(\xi_1,0))  = \langle{q\rangle}_2(x_1(\xi_1,-\ell(\xi_1,0)),\xi_1,-\ell(\xi_1,0)),
$$
$g(\xi_1) = \langle \langle q\rangle\rangle(\xi_1,-\ell(\xi_1,0))$, and $\psi$ is an analytic function such that $\psi(0) = 0$, $\psi'(0) = \chi'(0)$. We
compute next
$$
\partial_{\xi_1} \inf_{x_1}\widetilde{q}(x_1,\xi) = k'(\xi_1) + \psi'(\xi_1)\left(g(\xi_1)-k(\xi_1)\right) + \psi(\xi_1)\left(g'(\xi_1)-k'(\xi_1)\right),
$$
\begin{multline*}
\partial_{\xi_1}^2 \inf_{x_1}\widetilde{q}(x_1,\xi) = k''(\xi_1) + \psi''(\xi_1)\left(g(\xi_1)-k(\xi_1)\right) + 2\psi'(\xi_1)\left(g'(\xi_1)-k'(\xi_1)\right)\\
+ \psi(\xi_1)\left(g''(\xi_1)-k''(\xi_1)\right).
\end{multline*}
Using that
$$
k(0) = \inf_{x_1 \in {\bf T}} \langle{q\rangle}_2(x_1,0) = \inf Q_{\infty}(\Lambda_0) < \langle{q\rangle}(\Lambda_0) =
\langle \langle q\rangle\rangle(0) = g(0),
$$
we see that the derivatives $\chi'(0)$ and $\chi''(0)$ of the analytic function $\chi$ in (\ref{eq3.20}) can be chosen so that when $p(\xi) = 0$, we have
\begeq
\label{eq3.21}
\inf_{x_1\in {\bf T}} \widetilde{q}(x_1,\xi) \geq \inf_{x_1 \in {\bf T}} \langle{q\rangle}_2(x_1,0) + C \xi^2,
\endeq
where the constant $C>0$ is large. In other words, for $\Lambda \in {\rm neigh}(\Lambda_0,J)$, we get
\begeq
\label{eq3.21.1}
\inf_{\Lambda} \widetilde{q} \geq \inf Q_{\infty}(\Lambda_0) + \frac{1}{{\cal O}(1)} {\rm dist}(\Lambda,\Lambda_0)^2.
\endeq

\medskip
\noindent
{\it Remark}. In the preceding discussion, we do not have to restrict ourselves to the energy surface $p^{-1}(0)$. Indeed, introducing the variables
$$
\eta = (\eta_1,\eta_2) = (\xi_1 - f(\xi_2), \xi_2),
$$
as in (\ref{eq3.7}), and repeating the computations above we get
$$
\inf_{x_1\in {\bf T}}\widetilde{q}(x_1,\xi) \geq \inf_{x_1 \in {\bf T}} \langle{q\rangle}_2(x_1,f(\xi_2),\xi_2) + C \left(\xi_1- f(\xi_2)\right)^2,
$$
when $p(\xi) = E$, for $E\in {\rm neigh}(0,\real)$. Introducing the rational Lagrangian tori $\Lambda_E \subset p^{-1}(E)$, defined in (\ref{st.21.6}),
we therefore obtain on $p^{-1}(E)$,
\begeq
\label{eq3.21.01}
\inf_{x_1\in {\bf T}} \widetilde{q}(x_1,\xi) \geq \inf Q_{\infty}(\Lambda_E) + C_1 {\rm dist}(\Lambda,\Lambda_E)^2.
\endeq

\bigskip
\noindent
We shall now construct a suitable global weight function. In doing so, let $G_T$ be an analytic function defined in a neighborhood of $p^{-1}(0)\cap T^*M$, such that
\begeq
\label{eq3.22}
H_p G_T = q - \langle{q\rangle}_T,
\endeq
where
$$
\langle{q\rangle}_T = \frac{1}{T}\int_{-T/2}^{T/2} q\circ \exp(tH_p)\,dt,\quad T>0
$$
has been introduced in (\ref{st.13}). An application of Lemma 2.4 of~\cite{HiSjVu07} together with the assumption (\ref{st.19}) allows us to conclude that outside
an arbitrarily small neighborhood of $\Lambda_0$ in $p^{-1}(0)\cap T^*M$, we have
\begeq
\label{eq3.23}
\inf\left(q - H_p G_T\right) \geq \inf Q_{\infty}(\Lambda_0) + \frac{1}{C_0},
\endeq
provided that $T$ is taken large enough. Here $C_0>0$ is independent of the neighborhood taken. In these considerations, we are allowed to vary the real energy a
little, and we conclude that for any fixed neighborhood $W$ of
\begeq
\label{eq3.23.1}
\bigcup_{\abs{E} \leq E_0} \Lambda_E,\quad 0 < E_0 \ll 1,
\endeq
in $p^{-1}([-E_0,E_0])$ there exists $T$ large enough such that
\begeq
\label{eq3.24}
\inf_{p^{-1}([-E_0,E_0])\backslash W} \left(q  - H_p G_T\right) \geq \inf_{\abs{E}\leq E_0}\, \inf Q_{\infty}(\Lambda_E) + \frac{1}{C_0}.
\endeq
Here $C_0 > 0$ is independent of the neighborhood chosen.

\medskip
\noindent
The global weight function will be obtained by gluing together the functions $G_T:=G_T\circ \kappa_0^{-1}$ and $G_0$, both viewed as analytic functions
defined in a neighborhood of the zero section $\xi = 0$ in $T^*{\bf T}^2$. Let $\psi = \psi(\xi) \in C^{\infty}(\real^2; [0,1])$ be constant on each
invariant torus $\xi = {\rm Const}$, and assume that $\psi = 1$ near the rational region (\ref{eq3.23.1}), and with support in a small neighborhood of that set.
Let us set,
\begeq
\label{eq3.25}
G = \left(1 - \psi\right) G_T + \psi G_0.
\endeq
It follows that
\begeq
\label{eq3.26}
q - H_p G = \psi \left(q-H_p G_0\right) + (1-\psi) \langle{q\rangle}_T.
\endeq
In a neighborhood of the rational region (\ref{eq3.23.1}), we have
$$
q - H_p G = \widetilde{q} + {\cal O}\left((\xi_1 - f(\xi_2))^N\right),
$$
with $\widetilde{q}$ given in (\ref{eq3.20}), while further away from this set, we have
$q - H_p G = \langle{q\rangle}_T$. In order to understand the behavior of $\langle{q\rangle}_T$ near $\xi = 0$ for $T$ large, we write
$$
\langle{q\rangle}_T(x,\xi) = \frac{1}{T}\int_{-T/2}^{T/2} q(x+tp'(\xi),\xi)\,dt,
$$
and expanding $q(\cdot,\xi)$ in a Fourier series, we obtain that
\begeq
\label{eq3.27}
\langle{q\rangle}_T(x,\xi) = \sum_{k = (k_1,k_2) \in {\bf Z}^2} e^{ik\cdot x} \widehat{q}(k,\xi) \widehat{K}(T k\cdot p'(\xi)).
\endeq
Here $\widehat{K}$ is the Fourier transform of the characteristic function $K$ of the interval $[-1/2,1/2]$. Let us decompose,
\begeq
\label{eq3.28}
\langle{q\rangle}_T (x,\xi) = \sum_{k_2\neq 0} \widehat{K}(T p'(\xi)\cdot k)\widehat{q}(k,\xi) e^{ix\cdot k} +
\sum_{k_2=0} \widehat{K}(T p'(\xi)\cdot k)\widehat{q}(k,\xi) e^{ix\cdot k}
= {\rm I}+{\rm II},
\endeq
with the natural definitions of ${\rm I}$ and ${\rm II}$. When estimating ${\rm I}$, we use (\ref{eq3.2}) and notice that when $k_2\neq 0$, we have
$$
\abs{p'(\xi)\cdot k}\geq \abs{p'_{\xi_2} k_2}-{\cal O}(1)\abs{\xi_1-f(\xi_2)}\abs{k_1} \geq 1 - C\abs{\xi_1 - f(\xi_2)}\abs{k},\quad C>0.
$$
Here for notational simplicity we assume that the derivative of the function $\xi_2 \mapsto p(f(\xi_2),\xi_2)$ is $\geq 1$ near $0$. It follows that
$$
\abs{p'(\xi)\cdot k} \geq \frac{1}{2},
$$
provided that $2C\abs{\xi_1-f(\xi_2)}\abs{k}\leq 1$. Let now $0\leq \chi\in C^{\infty}_0((-1,1))$ be such that $\chi=1$ on $[-1/2,1/2]$ and write,
\begin{eqnarray}
\label{eq3.29}
{\rm I}& = & \sum_{k_2\neq 0} \chi(2C\abs{\xi_1-f(\xi_2)}\abs{k})
\widehat{K}(T p'(\xi)\cdot k) \widehat{q}(k,\xi)e^{ix\cdot k} \\ \nonumber
& & + \sum_{k_2\neq 0} (1-\chi(2C\abs{\xi_1-f(\xi_2)}\abs{k}))
\widehat{K}(T p'(\xi)\cdot k) \widehat{q}(k,\xi)e^{ix\cdot k} \\ \nonumber
& = & \sum_{k_2\neq 0} \chi(2C\abs{\xi_1-f(\xi_2)}\abs{k})
{\cal O}\left(\frac{1}{T \abs{p'(\xi)\cdot k}}\right) \widehat{q}(k,\xi) e^{ix\cdot k} \\ \nonumber
& & +\sum_{k_2\neq 0} (1-\chi(2C\abs{\xi_1-f(\xi_2)}\abs{k}))
\widehat{K}(T p'(\xi)\cdot k) \widehat{q}(k,\xi)e^{ix\cdot k}.
\end{eqnarray}
It is now easy to see, using the smoothness of $q$, that
\begeq
\label{eq3.30}
{\rm I}={\cal O}\left(\frac{1}{T} + \abs{\xi_1 - f(\xi_2)}^{\infty}\right),\quad T\geq 1.
\endeq
When considering the contribution coming from ${\rm II}$,  we notice that
\begeq
\label{eq3.31}
{\rm II} = \langle{q\rangle}_2(x_1,\xi) + \sum_{k_2=0, k_1\neq 0} \left(\widehat{K}(T p'_{\xi_1}k_1) -1\right) e^{ix_1 k_1}
\widehat{q}(k,\xi).
\endeq
Here $\abs{p'_{\xi_1}} \sim \abs{\xi_1-f(\xi_2)}$, in view of (\ref{eq3.2}), and we conclude that in the rational region where $\xi_1 = f(\xi_2)$,
$\xi_2 \in {\rm neigh}(0,\real)$, we get
$$
\langle{q\rangle}_T(x,\xi) = \langle{q\rangle}_2(x_1,\xi) + {\cal O}\left(\frac{1}{T}\right).
$$
Away from the rational region $\xi_1 = f(\xi_2)$, we see directly from (\ref{eq3.28}) that ${\rm II}$ converges to the torus average
$\langle\langle{q\rangle\rangle}(\xi)$, as $T\rightarrow \infty$.

\bigskip
\noindent
Combining the equations and estimates (\ref{eq3.17.1}), (\ref{eq3.21.1}), (\ref{eq3.22}), (\ref{eq3.24}), and (\ref{eq3.25}), we may summarize the discussion
above in the following proposition.
\begin{prop}
Let us make the assumption {\rm (\ref{st.19})}. Let $G_0$ be an analytic solution near $\xi=0$ of the equation {\rm (\ref{eq3.5})}, with $\widetilde{q}$ being of
the form {\rm (\ref{eq3.20})}, modulo ${\cal O}((\xi_1 - f(\xi_2))^N)$, for some $N$ fixed large enough. There exists a real-valued function
$G\in C^{\infty}_0(T^*M)$ such that $G = G_0\circ \kappa_0$ in a neighborhood of $\Lambda_0$, and such that away from a small neighborhood of $\Lambda_0$ in
the region $p^{-1}([-E_0,E_0])$, $0 < E_0 \ll 1$, we have
\begeq
\label{eq3.32}
q - H_p G \geq {\inf}\,Q_{\infty}(\Lambda_0) + \frac{1}{C_0},\quad C_0 > 0.
\endeq
When $\Lambda \subset p^{-1}(0)$, $\Lambda \in {\rm neigh}(\Lambda_0,J)$, we have furthermore
$$
\inf_{\Lambda} \left(q - H_p G\right) \geq \inf Q_{\infty}(\Lambda_0) + \frac{1}{C}{\rm dist}(\Lambda,\Lambda_0)^2.
$$
%and this extends to the entire energy slice $p^{-1}([-E_0,E_0])$ in the natural way.
\end{prop}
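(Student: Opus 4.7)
The plan is to prove the proposition by assembling the ingredients already developed in the preceding discussion. I would first choose $G_T$ (redefined as $G_T \circ \kappa_0^{-1}$) and $G_0$ as analytic functions on a neighborhood of $\xi = 0$ in $T^*\mathbf{T}^2$ satisfying the cohomological equations (\ref{eq3.22}) and (\ref{eq3.17.1}) respectively, and then set $G = (1-\psi) G_T + \psi G_0$ as in (\ref{eq3.25}), with $\psi \in C^\infty(\real^2;[0,1])$ a function of $\xi$ alone, equal to $1$ near the rational family (\ref{eq3.23.1}) and supported in a slightly larger neighborhood of it. Extending this to a real-valued compactly supported function on $T^*M$ via $\kappa_0$ and a cutoff in the ellipticity region of $p$ at infinity completes the construction; modifying $G$ outside a compact set in the elliptic region is harmless for what follows.

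Next I would verify the two lower bounds for $q - H_p G$ by using the identity $q - H_p G = \psi(q - H_p G_0) + (1-\psi)\langle q\rangle_T$ from (\ref{eq3.26}). On the set where $\psi = 1$, i.e. in a neighborhood of $\Lambda_0$, equation (\ref{eq3.17.1}) gives $q - H_p G_0 = \widetilde{q} + \mathcal{O}((\xi_1 - f(\xi_2))^N)$ with $\widetilde{q}$ defined by (\ref{eq3.20}), and the quadratic lower bound (\ref{eq3.21.1}) together with the fact that $\mathrm{dist}(\Lambda,\Lambda_0)^N = o(\mathrm{dist}(\Lambda,\Lambda_0)^2)$ for $N$ large yields the second assertion of the proposition for $\Lambda$ close to $\Lambda_0$. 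On the set where $\psi = 0$, we have $q - H_p G = \langle q\rangle_T$, and the bound (\ref{eq3.24}), derived from Lemma 2.4 of~\cite{HiSjVu07} and the separation hypothesis (\ref{st.19}), gives $q - H_p G \geq \inf Q_\infty(\Lambda_0) + 1/C_0$ there, provided $T$ is chosen sufficiently large relative to the size of the support of $1-\psi$ inside $p^{-1}([-E_0,E_0])$.

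The delicate point is the transition annulus where $\psi$ is strictly between $0$ and $1$. There $q - H_p G$ is a convex combination of $\widetilde{q} + \mathcal{O}((\xi_1-f(\xi_2))^N)$ and $\langle q\rangle_T$, and one needs both quantities to stay above $\inf Q_\infty(\Lambda_0)+1/C_0$ on the same region. The bound on $\widetilde{q}$ provides this because $\mathrm{dist}(\Lambda,\Lambda_0)$ is bounded from below on the transition set, while the bound on $\langle q\rangle_T$ requires the transition set to be at positive distance from $\Lambda_0$; both are compatible if the support of $1-\psi$ is chosen to include an annular neighborhood of $\Lambda_0$ of fixed positive inner radius. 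The analysis (\ref{eq3.27})--(\ref{eq3.31}) shows that $\langle q\rangle_T$ is close to $\langle q\rangle_2$ in the rational region and to $\langle\langle q\rangle\rangle$ away from it, up to $\mathcal{O}(1/T)$, so nothing is lost in taking $T$ large.

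The main obstacle is the consistent choice of the scales: the radius of the support of $\psi$, the inner radius of the annulus where $1-\psi$ is supported, the exponent $N$ in (\ref{eq3.17.1}), and the averaging time $T$ in (\ref{eq3.24}) must be chosen in this order so that the quadratic estimate (\ref{eq3.21.1}) dominates the remainder in the inner region, and so that (\ref{eq3.24}) holds uniformly on the outer region. The continuity property (\ref{st.11.1}) and the uniformity in (\ref{eq3.24}) make this choice possible, after which the two estimates of the proposition follow by straightforward combination.
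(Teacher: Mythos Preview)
Your proposal is correct and follows essentially the same approach as the paper: you build $G$ as the patched function $(1-\psi)G_T + \psi G_0$ from (\ref{eq3.25}), exploit the identity (\ref{eq3.26}) (which relies on $\psi$ being $H_p$-invariant), and then combine the quadratic lower bound (\ref{eq3.21.1}) near $\Lambda_0$ with the averaging bound (\ref{eq3.24}) away from it. Your explicit treatment of the transition annulus as a convex combination, and your remark on the order in which the scales (support of $\psi$, $N$, $T$) must be chosen, make explicit what the paper leaves implicit in its summary sentence before the proposition.
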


\bigskip
\noindent
Associated to the weight function $G$ defined in Proposition 4.2, we shall now introduce a suitable small but globally defined deformation of the real phase
space $T^*M$ into the complex domain. When doing so, let $\widetilde{G}\in C^{\infty}_0(T^*\widetilde{M})$ be an almost holomorphic extension of $G$, and let us set
\begeq
\label{eq3.32.1}
\Lambda_{\eps G} = \exp(\eps H_{{\rm Re}\, \widetilde{G}}^{{\rm Im}\, \sigma})(T^*M) \subset T^*\widetilde{M}.
\endeq
Here $\sigma$ is the complex symplectic (2,0)--form on $T^*\widetilde{M}$ and $H_{{\rm Re}\, \widetilde{G}}^{{\rm Im}\, \sigma}$ is the Hamilton vector field
of ${\rm Re}\, \widetilde{G}$ computed with respect to the real symplectic form ${\rm Im}\, \sigma$ on $T^*\widetilde{M}$. It follows that the manifold
$\Lambda_{\eps G}$ is I-Lagrangian and being a small deformation of $T^*M$, it is also R-symplectic, i.e. an IR-manifold. From~\cite{MeSj1} and~\cite{Sj82},
we recall the general relation
$$
\widehat{i\eps H_{\widetilde{G}}} = \eps H_{{\rm Re}\, \widetilde{G}}^{{\rm Im}\, \sigma},
$$
valid to infinite order along the real domain $T^*M$. Here $\widehat{i\eps H_{\widetilde{G}}}$ stands for the real vector field in $T^*\widetilde{M}$,
naturally associated to the complex (1,0) vector field,
$$
i\eps H_{\widetilde{G}} = i\eps \sum_{j=1}^2
\left(\frac{\partial \widetilde{G}}{\partial \xi_j}\frac{\partial}{\partial x_j} - \frac{\partial \widetilde{G}}{\partial x_j}\frac{\partial}{\partial \xi_j}\right).
$$
It follows that in the region where $G$ is analytic, including a sufficiently small but fixed neighborhood
of $\Lambda_0$, we have
$$
\Lambda_{\eps G} = \exp(i\eps H_G)(T^*M),
$$
where we write $G$ also for the holomorphic extension and recall that $\exp(i\eps H_G)$ is a holomorphic canonical transformation.

\medskip
\noindent
Associated to the IR-manifold $\Lambda_{\eps G}$ is the micro\-locally exponentially weigh\-ted Hil\-bert spa\-ce $H(\Lambda_{\eps G})$, defined using the
FBI--Bargmann approach, by modifying the exponential weight on the FBI transform side. We refer to~\cite{MeSj2},~\cite{HiSj12} for the detailed defi\-nition of
the space $H(\Lambda_{\eps G})$ in the case when $M = \real^2$, and to~\cite{Sj96} and the Appendix of~\cite{HiSj1} for the case when $M$ is compact.
Following~\cite{MeSj1},~\cite{MeSj2},~\cite{Sj96}, let us introduce a microlocally unitary $h$--Fourier integral operator
\begeq
\label{eq3.32.2}
{U}_G: L^2(M) \rightarrow H(\Lambda_{\eps G}),
\endeq
defined microlocally near $p^{-1}(0)\cap T^*M$ and associated to a suitable canonical transformation
$$
\kappa_G: {\rm neigh}(p^{-1}(0), T^*M) \rightarrow {\rm neigh}(p^{-1}(0), \Lambda_{\eps G}),
$$
such that $\kappa_G = \exp(i\eps H_G)$ near $\Lambda_0$. It follows that the operator
\begeq
\label{eq3.32.3}
P_{\eps}: H(\Lambda_{\eps G}) \rightarrow H(\Lambda_{\eps G})
\endeq
is microlocally near $p^{-1}(0)$ unitarily equivalent to the conjugated operator
$$
{U}_G^{-1} P_{\eps} {U}_G: L^2 \rightarrow L^2,
$$
with the leading symbol
$$
p_{\eps}|_{\Lambda_{\eps G}} \simeq p + i\eps \left(q-H_p G\right) + {\cal O}(\eps^2).
$$
Letting
$$
U_0: L^2(M) \rightarrow L^2_{\theta}({\bf T}^2)
$$
be the semiclassical microlocally unitary Fourier integral operator with a real phase associated to the canonical transformation $\kappa_0$ in (\ref{eq3.1})
and using the operator $U_0 U_G^{-1}$ associated to the canonical transformation
$$
\kappa_0 \circ \kappa_G^{-1}: {\rm neigh}(\exp(i\eps H_G)(\Lambda_0), \Lambda_{\eps G}) \rightarrow {\rm neigh}(\xi = 0, T^*{\bf T}^2),
$$
we get that microlocally near the Lagrangian torus $\exp(i\eps H_G)(\Lambda_0) \subset \Lambda_{\eps G}$, the operator
in (\ref{eq3.32.3}) is unitarily equivalent to an operator $\widetilde{P}_{\eps}$, acting on $L^2_{\theta}({\bf T}^2)$, defined microlocally near
$\xi = 0$ in $T^*{\bf T}^2$, given by
\begeq
\label{eq3.33}
\widetilde{P}_{\eps} \sim \sum_{\nu = 0}^{\infty} h^{\nu} \widetilde{p}_{\nu}(x,\xi,\eps).
\endeq
Here $\widetilde{p}_{\nu}$ are holomorphic functions in a fixed complex neighborhood of $\xi = 0$, smooth in $\eps \in {\rm neigh}(0,\real)$, and
\begeq
\label{eq3.34}
\widetilde{p}_0 = p(\xi) + i\eps \widetilde{q}(x_1,\xi) + {\cal O}(\eps^2) + \eps {\cal O}((\xi_1-f(\xi_2))^N),
\endeq
with $\widetilde{q}(x_1,\xi)$ independent of $x_2$ and of the form (\ref{eq3.20}). Furthermore, the assumption (\ref{st.24}) implies that
$$
\widetilde{p}_1 (x,\xi,\eps) = {\cal O}(\eps).
$$
We may illustrate the microlocal unitary equivalence above by the following commutative diagram,
\begeq
\label{eq3.35}
\begin{tikzcd}
P_{\eps}: H(\Lambda_{\eps G}) \arrow{r} \arrow[swap]{d}{U_0 U_G^{-1}} & H(\Lambda_{\eps G}) \arrow{d}{U_0 U_G^{-1}} \\
\widetilde{P}_{\eps}: L^2_{\theta}({\bf T}^2) \arrow{r}  & L^2_{\theta}({\bf T}^2)
\end{tikzcd}
\endeq

\medskip
\noindent
In what follows, we shall drop the tildes from the notation in (\ref{eq3.33}) and write simply $P_{\eps}$ and $p_{\nu}$, $\nu \geq 0$.

\section{Quantum normal forms near rational tori}
\label{qnf}
\setcounter{equation}{0}
In this section, we shall be concerned with a classical $h$--pseudodifferential operator $P_{\eps}(x,hD_x;h)$, defined microlocally near $\xi = 0$ in $T^*{\bf T}^2$,
given by the expansion (\ref{eq3.33}), with the leading symbol of the form (\ref{eq3.34}). Our purpose here is to obtain a normal secular reduction of $P_{\eps}$,
also on the level of lower order symbols, and this will be accomplished in a way very similar to~\cite{HiSjVu07},~\cite{HiSj08b}.

\medskip
\noindent
Let us first discuss the normal form construction at the level of principal symbols. In doing so, we let $\widetilde{q}_0 := \widetilde{q}$ in (\ref{eq3.34}), and
write
\begeq
\label{eq4.1}
p_0(x,\xi,\eps) = p(\xi) + i\eps \widetilde{q}_0(x_1,\xi) + i\eps^2 q_1(x,\xi) + {\cal O}\left(\eps^3 + \eps (\xi_1-f(\xi_2))^N\right).
\endeq
Arguing as in Section 3, we can construct an analytic function $G_1$, defined near $\xi = 0$, such that modulo ${\cal O}((\xi_1 - f(\xi_2))^N)$, we have
$$
H_p G_1 = q_1 - \widetilde{q}_1,
$$
where $\widetilde{q}_1$ is any analytic function satisfying (\ref{eq3.18}), (\ref{eq3.19}), with $q$ replaced there by $q_1$. It follows that
$$
p_0(\exp(i\eps^2 H_{G_1})(x,\xi)) = p(\xi) + i\eps \widetilde{q}_0(x_1,\xi) + i\eps^2 \widetilde{q}_1(x_1,\xi) + {\cal O}(\eps^3 + \eps (\xi_1-f(\xi_2))^N).
$$
Continuing this procedure, we get the following result.

\begin{prop}
\label{prop4.1}
Let $p_0(x,\xi,\eps) = p(\xi) + i\eps \widetilde{q}_0(x_1,\xi) + {\cal O}(\eps^2) + \eps {\cal O}((\xi_1-f(\xi_2))^N)$ be analytic defined near $\xi = 0$, depending
smoothly on $\eps \in {\rm neigh}(0,\real)$. Here $N\geq 2$ is arbitrarily large but fixed. Assume that
$$
p(\xi) = p(f(\xi_2),\xi_2) + g(\xi) (\xi_1 - f(\xi_2))^2,\quad g(0)>0,\quad f(0) = 0,
$$
where $p(f(\xi_2),\xi_2) = \alpha \xi_2 + {\cal O}(\xi_2^2)$, $\alpha>0$. There exists a holomorphic canonical transformation $\kappa_{\eps}^{(N)}$ of the form
\begeq
\label{eq4.2}
\kappa_{\eps}^{(N)} = \exp(i\eps^2 H_{G_1})\circ\cdots \circ \exp(i\eps^{N}H_{G_{N-1}}),
\endeq
with $G_j$ analytic near $\xi = 0$, $1\leq j \leq N-1$, such that modulo an error term of the form ${\cal O}(\eps^{N+1} + \eps (\xi_1-f(\xi_2))^N)$, we have
$$
p_0(\kappa_{\eps}^{(N)}(x,\xi)) \equiv p(\xi) + i\eps \widetilde{q}_0(x_1,\xi) + i\eps^2 (\widetilde{q}_1(x_1,\xi) + \ldots\, + \eps^{N-2}\widetilde{q}_{N-1})
$$
is independent of $x_2$. Here, as discussed before, $\widetilde{q}_0$ is any analytic function satisfying {\rm (\ref{eq3.18})}, {\rm (\ref{eq3.19})}, and
inductively $\widetilde{q}_k$ is any analytic function satisfying {\rm (\ref{eq3.18})}, {\rm (\ref{eq3.19})}, with $q$ there replaced by a certain function $q_k$ that depends on the
previously chosen $\widetilde{q}_0,\ldots \widetilde{q}_{k-1}$.
\end{prop}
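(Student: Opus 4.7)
\medskip
\noindent
I would proceed by induction on $k = 1, 2, \ldots, N-1$. At step $k$, I assume that $G_1, \ldots, G_{k-1}$ and $\widetilde{q}_1, \ldots, \widetilde{q}_{k-1}$ have already been produced, the latter being analytic near $\xi = 0$ and independent of $x_2$, such that the pullback of $p_0$ by $\exp(i\eps^2 H_{G_1}) \circ \cdots \circ \exp(i\eps^k H_{G_{k-1}})$ takes the form
$$
P^{(k-1)}(x,\xi,\eps) := p(\xi) + i\sum_{j=0}^{k-1} \eps^{j+1}\widetilde{q}_j(x_1,\xi) + i\eps^{k+1} q_k(x,\xi) + {\cal O}(\eps^{k+2} + \eps (\xi_1 - f(\xi_2))^N),
$$
for a certain analytic $q_k$ which depends on the previously chosen $\widetilde{q}_0, \ldots, \widetilde{q}_{k-1}$. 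The base case $k = 1$ is precisely the hypothesis on $p_0$, $q_1$ being extracted from the given ${\cal O}(\eps^2)$ term.

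\medskip
\noindent
For the inductive step I apply Taylor expansion along the Hamiltonian flow. In the paper's convention $H_G f = \{G, f\}$, so
$$
P^{(k-1)} \circ \exp(i\eps^{k+1} H_{G_k}) = P^{(k-1)} - i\eps^{k+1}\{P^{(k-1)}, G_k\} + {\cal O}(\eps^{2(k+1)}).
$$
Only the $p(\xi)$ part of $P^{(k-1)}$ contributes to the Poisson bracket at order $\eps^{k+1}$, giving $\{p, G_k\} = H_p G_k$; brackets of $G_k$ with the previously normalized pieces $\widetilde{q}_j$ enter only at orders $\eps^{k+2+j}$ and so feed into the ${\cal O}(\eps^{k+2})$ remainder, contributing to the redefinition of $q_{k+1}$ at the next step. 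Matching the $\eps^{k+1}$ coefficient to $i\widetilde{q}_k$ leads to the cohomological equation
$$
H_p G_k = q_k - \widetilde{q}_k \ \ \text{modulo} \ \ {\cal O}((\xi_1 - f(\xi_2))^N),
$$
which is precisely (\ref{eq3.5}) of Section \ref{sec}. Choosing $\widetilde{q}_k$ of the form (\ref{eq3.20}) with $q$ replaced by $q_k$ guarantees the compatibility conditions (\ref{eq3.18}), (\ref{eq3.19}); the coefficient-by-coefficient Taylor procedure in $\eta_1 = \xi_1 - f(\xi_2)$ described in Section \ref{sec}, together with Proposition~\ref{sec1} for the flat cohomological equations on ${\bf T}^2$, then produces an analytic $G_k$ on a fixed complex neighborhood of $\xi = 0$.

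\medskip
\noindent
The main technical point is the bookkeeping of the compound error ${\cal O}(\eps^{k+2} + \eps(\xi_1 - f(\xi_2))^N)$ under composition of the symplectomorphisms. The residual $H_p G_k - (q_k - \widetilde{q}_k) = {\cal O}((\xi_1 - f(\xi_2))^N)$, multiplied by $\eps^{k+1}$, is absorbed into $\eps \cdot {\cal O}((\xi_1 - f(\xi_2))^N)$ for $k \geq 0$, so the second kind of error is preserved. The higher-order Taylor terms from $\exp(i\eps^{k+1} H_{G_k})$ are of order $\eps^{2(k+1)} \geq \eps^{k+2}$, and the pullback of the lower-order pieces under the new flow generates only ${\cal O}(\eps^{k+2})$ corrections, which modify $q_{k+1}$ without disturbing the already-normalized terms. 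Since $N$ is fixed and only $N-1$ steps are performed, uniform analytic bounds on $G_1, \ldots, G_{N-1}$ in a common complex neighborhood of $\xi = 0$ are automatic; no small-divisor issue arises because the cohomological equation solved here is the rational one of Section \ref{sec} and not a Diophantine one. After the $N-1$ iterations, composing the flows as in (\ref{eq4.2}) produces the desired $\kappa_\eps^{(N)}$ and completes the proof.
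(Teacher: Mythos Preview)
Your proof is correct and follows essentially the same approach as the paper: the paper carries out the first step explicitly---writing $p_0 = p(\xi) + i\eps\widetilde{q}_0 + i\eps^2 q_1 + {\cal O}(\eps^3 + \eps(\xi_1-f(\xi_2))^N)$, solving $H_p G_1 = q_1 - \widetilde{q}_1$ modulo ${\cal O}((\xi_1-f(\xi_2))^N)$ via the machinery of Section~\ref{sec}, and applying $\exp(i\eps^2 H_{G_1})$---and then simply says ``Continuing this procedure, we get the following result.'' Your inductive write-up makes the bookkeeping of the compound error ${\cal O}(\eps^{k+2} + \eps(\xi_1-f(\xi_2))^N)$ explicit, which the paper leaves to the reader, but the substance is identical.
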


\bigskip
\noindent
As will be discussed in Section 7, the complex canonical transformation $\kappa_{\eps}^{(N)}$ in (\ref{eq4.2}) can be quantized by means of an elliptic
classical $h$-Fourier integral operator $U_{\eps}$ in the complex domain, depending smoothly on $\eps \in {\rm neigh}(0,\real)$, introduced rigorously on the
FBI transform side. In this section, we shall proceed formally, and an application of Egorov's theorem allows us to conclude that the operator
$$
\widetilde{P}_{\eps}(x,hD_x;h) = U_{\eps}^{-1} P_{\eps}(x,hD_x;h) U_{\eps}
$$
is an $h$--pseudodifferential operator, defined microlocally near $\xi = 0$, whose symbol has a complete asymptotic expansion
\begeq
\label{eq4.21}
\widetilde{P}_{\eps}(x,\xi;h) \sim \widetilde{p}_0 + h\widetilde{p}_1 + \ldots ,
\endeq
with all $\widetilde{p}_j = \widetilde{p}_j(x,\xi,\eps)$ being smooth functions of $\eps \in {\rm neigh}(0,\real)$ with values in the space of holomorphic
functions in a fixed complex neighborhood of $\xi = 0$, such that
\begeq
\label{eq4.22}
\widetilde{p}_0(x,\xi,\eps) = p(\xi) + i\eps \widetilde{q}_0(x_1,\xi) + {\cal O}(\eps^2) + {\cal O}(\eps^{N+1} + \eps (\xi_1 - f(\xi_2))^{N}).
\endeq
Here the ${\cal O}(\eps^2)$--term is independent of $x_2$ and has the properties described in Proposition \ref{prop4.1}. Furthermore, we still have
$\widetilde{p}_1(x,\xi,\eps) = {\cal O}(\eps)$.

\bigskip
\noindent
We shall now simplify the lower order terms $\widetilde{p}_j$, $j\geq 1$, in (\ref{eq4.21}). To that end, let $A_\eps (x,hD;h)$ be a classical analytic elliptic
$h$--pseudodifferential operator of order $0$, with symbol
\begin{equation}
\label{eq4.3}
A_\eps (x,\xi ;h)\sim a_0 (x,\xi,\eps)+h a_1(x,\xi,\eps)+\ldots ,
\end{equation}
depending smoothly on $\eps \in \mathrm{neigh\,}(0,{\bf R})$. Then
$$
A_\eps ^{-1}U_\eps ^{-1}P_\eps U_\eps A_\eps = A_{\eps}^{-1} \widetilde{P}_{\eps} A_{\eps} =:\widehat{P}_{\eps}(x,hD_x;h),
$$
where
\begin{equation}
\label{eq4.4}
\widehat{P}_\eps (x,\xi ;h)\sim \widetilde{p}_0(x,\xi,\eps) + h\widehat{p}_{1}(x,\xi,\eps)+h^2 \widehat{p}_{2}(x,\xi,\eps)+h^3 \widehat{p}_{3}(x,\xi,\eps)+\ldots ,
\end{equation}
with
\begin{equation}
\label{eq4.5}
\widehat{p}_{1}=\widetilde{p}_{1}+\frac{1}{i}a_0^{-1} H_{\widetilde{p}_0} a_0 = \widetilde{p}_{1}+\frac{1}{i}H_{\widetilde{p}_0}b_0,
\end{equation}
if $b_0 =\ln a_0 $, well-defined up to a constant. Thus, looking for $b_0$ in terms of a formal power series in $\eps$ and choosing the terms there suitably, we
can arrange so that
$$
\widehat{p}_{1}(x,\xi,\eps) = \widehat{p}_{1,0}(x_1,\xi) + \eps \widehat{p}_{1,1}(x_1,\xi) + \ldots\, +{\cal O}(\eps^{N+1} + (\xi _1-f(\xi _2))^{N}),
$$
where $\widehat{p}_{1,0}$ is any analytic function satisfying (\ref{eq3.18}), (\ref{eq3.19}) with $q$ replaced by $\widetilde{p}_{1,\eps = 0}$,
and inductively, $\widehat{p}_{1,k}$ is any analytic function satisfying (\ref{eq3.18}), (\ref{eq3.19}), with $q$ replaced by a function depending on the
previously chosen $\widehat{p}_{1,0},...,\widehat{p}_{1,k-1}$.

\bigskip
\noindent
Iterating this procedure, by choosing also the lower order terms in the expansion of $A_\eps$, we get the following result, giving a quantum secular normal form
construction.
\begin{prop}
\label{prop4.2}
Let
$$
P_{\eps} \sim p_0 + hp_1 + \ldots\, \quad (x,\xi)\in {\rm neigh}(\xi = 0, T^*{\bf T}^2),
$$
be such that $p_0(x,\xi,\eps)$ has the properties stated in Proposition {\rm \ref{prop4.1}}. Let $U_\eps$ be an elliptic classical analytic $h$--Fourier
integral operator of order 0, associated to the canonical transformation $\kappa_\eps^{(N)}$ in Proposition {\rm \ref{prop4.1}}. Then we can construct an
elliptic classical analytic $h$--pseudodifferential operator of order 0 with symbol as in {\rm (\ref{eq4.3})}, such that
\begin{equation}
\label{sec.31}
A_\eps^{-1}U_\eps^{-1}P_\eps(x,hD;h)U_\eps A_\eps =\widehat{P}_\eps(x,hD_x;h),
\end{equation}
where $\widehat{P}_{\eps}(x,hD_x;h)$ is of the form
\begin{equation}
\label{sec.32}
\widehat{P}_\eps(x,\xi;h)\sim \widetilde{p}_{0}(x,\xi,\eps)+h \widehat{p}_{1}(x,\xi,\eps)+h^2 \widehat{p}_{2}(x,\xi,\eps)+\ldots .
\end{equation}
Here the leading term $\widetilde{p}_0$ is as in {\rm (\ref{eq4.22})},
$$
\widehat{p}_{0}(x,\xi,\eps) = p(\xi )+i\eps \widetilde{q}_0(x_1,\xi) + {\cal O}(\eps^2) + {\cal O}(\eps^{N+1} + \eps(\xi_1 - f(\xi_2))^N),
$$
with the ${\cal O}(\eps^2)$--term being independent of $x_2$. For $1 \leq k\leq N$, we have
\begin{equation}
\label{sec.33}
\widehat{p}_{k}(x,\xi,\eps) = \widehat{p}_{k,0}(x_1,\xi) + \eps \widehat{p}_{k,1}(x_1,\xi) + \ldots\, +{\cal O}((\eps^{N+1} + (\xi _1-f(\xi _2))^{N}),
\endeq
where $\widehat{p}_{k,\ell}$ is any function satisfying {\rm (\ref{eq3.18})}, {\rm (\ref{eq3.19})}, with $q$ replaced by
$\widetilde{p}_{k,\ell}$; a function that depends on $\widehat{p}_{\widetilde{k},\widetilde{\ell}}$, for all $(\widetilde{k},\widetilde{\ell})$ such that either
$\widetilde{k}<k$, or $\widetilde{k}=k$ and $\widetilde{\ell}<\ell$. In particular, we can choose $\widehat{p}_{k}$ independent of $x_2$ modulo
${\cal O}(\eps^{N+1} + (\xi _1-f(\xi _2))^N)$. We also have
$$
\widehat{p}_1(x,\xi,\eps) = {\cal O}(\eps).
$$
\end{prop}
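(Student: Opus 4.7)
The plan is to prove Proposition \ref{prop4.2} by combining Egorov's theorem with a double inductive argument: the outer induction is on the order $k$ in $h$ of the symbols to be normalized, and the inner induction is on the order $j$ in $\eps$ within each fixed $h^k$--level. First, I would appeal to Proposition \ref{prop4.1} to quantize the canonical transformation $\kappa_\eps^{(N)}$ by an elliptic classical analytic $h$--Fourier integral operator $U_\eps$ (the actual construction being carried out on the FBI transform side, as indicated in Section 7). Egorov's theorem then yields $\widetilde{P}_\eps := U_\eps^{-1} P_\eps U_\eps$ with a full symbol expansion whose principal part is $p_0\circ \kappa_\eps^{(N)}$, already in the form \eqref{eq4.22}, and whose subprincipal part still satisfies $\widetilde{p}_1 = {\cal O}(\eps)$ since $\kappa_\eps^{(N)}|_{\eps=0} = \mathrm{id}$ and $p_1|_{\eps=0}=0$.

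Next, I would look for the conjugating pseudodifferential operator $A_\eps$ in the form \eqref{eq4.3}, and compute the symbol of $A_\eps^{-1}\widetilde{P}_\eps A_\eps$ by the semiclassical Moyal expansion. Writing $b_0 = \ln a_0$, the coefficient of $h^1$ is given by \eqref{eq4.5}, while for $k\ge 2$ one obtains
\begin{equation*}
\widehat{p}_{k} \;=\; \widetilde{p}_{k} \;+\; \tfrac{1}{i}H_{\widetilde{p}_0} b_{k-1} \;+\; R_{k}\bigl(\widetilde{p}_0,\dots,\widetilde{p}_{k-1};\, b_0,\dots,b_{k-2}\bigr),
\end{equation*}
where $R_k$ is an explicit polynomial expression in the lower order symbols, and $b_j$ corresponds to the contribution of $a_j$ in $A_\eps$ (obtained by solving a triangular system relating $\ln A_\eps$ to its symbolic expansion). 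The outer induction assumes that $\widehat{p}_1,\ldots,\widehat{p}_{k-1}$ have already been brought to the normal form \eqref{sec.33}, and we must solve for $b_{k-1}$ so that $\widehat{p}_k$ is likewise of that form.

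For the inner induction, I would expand $b_{k-1}$ and $\widehat{p}_k$ in formal Taylor series in $\eps$,
\begin{equation*}
b_{k-1}(x,\xi,\eps) \;=\; \sum_{j=0}^{N} \eps^{j}\, b_{k-1,j}(x,\xi) \,+\, {\cal O}(\eps^{N+1}), \qquad \widehat{p}_{k} \;=\; \sum_{j=0}^{N} \eps^{j}\,\widehat{p}_{k,j} \,+\,{\cal O}(\eps^{N+1}).
\end{equation*}
Since $H_{\widetilde{p}_0} = H_p + i\eps H_{\widetilde{q}_0} + {\cal O}(\eps^2)$, identifying powers of $\eps$ in the equation for $\widehat{p}_k$ produces, at order $\eps^j$, a cohomological equation of the form
\begin{equation*}
H_p\, b_{k-1,j} \;=\; i\bigl(\widetilde{p}_{k,j} \,-\, \widehat{p}_{k,j}\bigr) \;+\; F_{k,j},
\end{equation*}
where $F_{k,j}$ depends only on previously determined quantities $b_{k-1,j'}$, $\widetilde{q}_{j''}$, $\widehat{p}_{\widetilde{k},\widetilde{\ell}}$ with $(\widetilde{k},\widetilde{\ell})$ preceding $(k,j)$. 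This is exactly of the form \eqref{eq3.5} treated in Proposition \ref{sec1}, and that proposition, together with the construction following \eqref{eq3.17.1}, guarantees that we can choose $b_{k-1,j}$ analytic near $\xi = 0$, together with any $\widehat{p}_{k,j}$ independent of $x_2$ satisfying the matching conditions \eqref{eq3.18}, \eqref{eq3.19} applied to the right hand side $i\,\widetilde{p}_{k,j} + iF_{k,j}$; the equation then holds modulo ${\cal O}((\xi_1 - f(\xi_2))^N)$.

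The principal obstacle, and the only subtle point, is a bookkeeping one: one must check that at each inductive step the unsolvable residue remains within the error class ${\cal O}\bigl(\eps^{N+1} + (\xi_1 - f(\xi_2))^N\bigr)$. This works because (i) Proposition \ref{sec1} provides exact smooth solutions modulo the $(\xi_1 - f(\xi_2))^N$ remainder, (ii) truncating the $\eps$--expansion at order $N$ produces the $\eps^{N+1}$ remainder, and (iii) the corrections from $H_{\widetilde{q}_0}$ and higher $H_{\widetilde{q}_k}$ in $H_{\widetilde{p}_0}$ act only within the $\eps$--hierarchy, raising the power of $\eps$ by at least one, so that they can always be absorbed in the next inner step. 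The inherited property $\widetilde{p}_1 = {\cal O}(\eps)$ persists through the pseudodifferential conjugation since the leading $\eps^0$--contribution to $\widehat{p}_1$ reduces to $\widetilde{p}_{1}|_{\eps=0} + \tfrac{1}{i} H_p b_{0,0}|_{\eps=0} = 0$, which can be solved trivially by taking $b_{0,0}|_{\eps=0} = 0$. Finally, Borel summation of the formal symbols $a_k(x,\xi,\eps)$ in $h$ produces the desired classical analytic symbol $A_\eps$, and \eqref{sec.31}--\eqref{sec.33} follow.
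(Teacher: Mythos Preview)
Your proposal is correct and follows essentially the same route as the paper: first conjugate by the Fourier integral operator $U_\eps$ quantizing $\kappa_\eps^{(N)}$ and invoke Egorov's theorem to obtain $\widetilde{P}_\eps$ with principal symbol already in normal form, then conjugate by a pseudodifferential operator $A_\eps$ whose symbol is determined order by order in $h$ and, within each $h$--level, order by order in $\eps$ by solving the cohomological equation via Proposition~\ref{sec1}. The paper presents the argument more tersely (carrying out the $h^1$ level explicitly in \eqref{eq4.5} and then saying ``Iterating this procedure, by choosing also the lower order terms in the expansion of $A_\eps$''), whereas you have spelled out the double induction and the error bookkeeping, but the underlying strategy is identical.
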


\section{Harmonic appro\-xima\-tion for non-self\-ad\-jo\-int o\-pe\-ra\-tors}
\setcounter{equation}{0}
In the previous section, we have seen how to eliminate the $x_2$-dependence in the complete symbol of our operator, by means of successive averaging procedures,
when working in a small neighborhood of the rational torus $\Lambda_0 = \{\xi = 0\} \subset T^*{\bf T}^2$. Following Proposition \ref{prop4.2} and neglecting
the remainder terms there, we shall now consider an operator of the form
\begeq
\label{eq5.1}
P_{\eps} = P_{\eps}(x_1,hD_x;h),
\endeq
defined microlocally near $\xi = 0$ in $T^*{\bf T}^2$ and acting on $L^2_{\theta}({\bf T}^2)$, with a complete symbol independent of $x_2$. We assume that the
leading symbol of $P_{\eps}$ is of the form
\begeq
\label{eq5.2}
p_0(x_1,\xi,\eps) = p(\xi) + i\eps \widetilde{q}(x_1,\xi) + {\cal O}(\eps^2),\quad p(\xi) = p(f(\xi_2),\xi_2) + g(\xi)\left(\xi_1 - f(\xi_2)\right)^2,
\endeq
with $\widetilde{q}$ given in (\ref{eq3.20}), and let us recall the assumption (\ref{st.23}) implying that the function
${\bf T}\ni x_1 \mapsto \widetilde{q}(x_1,f(\xi_2),\xi_2)$ has a unique minimum which is non-degenerate, for $\xi_2 \in {\rm neigh}(0,\real)$.
When discussing the spectral analysis of $P_{\eps}$, it is natural, in view of its independence of $x_2$, to take a Fourier series expansion in $x_2$, thereby
reducing the problem, at least formally, to a direct sum of one-dimensional operators
$$
P_{\eps}(x_1,hD_{x_1}, h(j-\theta_2);h),\quad j\in \z, \quad \theta_2 = \frac{k_0(\alpha_2)}{4} + \frac{S_2}{2\pi h},
$$
considered for those $j$ for which $h(j-\theta_2)\in {\rm neigh}(0,\real)$.

\bigskip
\noindent
In what follows, we shall write $\xi_2 = h(j-\theta_2) \in {\rm neigh}(0,\real)$, and concentrate our attention on the one-dimensional operator
\begeq
\label{eq5.3}
P_{\eps}(x_1,hD_{x_1},\xi_2;h),
\endeq
acting on $L^2_{\theta_1}({\bf T})$. Modifying the Floquet conditions on ${\bf T}$, we may replace (\ref{eq5.3}) by the conjugated operator
$$
e^{-if(\xi_2)x_1/h}P_{\eps}(x_1,hD_{x_1},\xi_2;h)e^{if(\xi_2)x_1/h} = P_{\eps}(x_1,f(\xi_2) + hD_{x_1},\xi_2;h).
$$
The full symbol of $P_{\eps}(x_1,hD_{x_1}+f(\xi_2),\xi_2;h)$ is of the form
\begeq
\label{eq5.4}
P_{\eps}(x_1,\xi_1+f(\xi_2),\xi_2;h) = \sum_{j=0}^{\infty} h^j p_{j,\eps}(x_1,\xi).
\endeq
Here
\begeq
\label{eq5.5}
p_{0,\eps}(x_1,\xi)= p(\xi_1+f(\xi_2),\xi_2) + i\eps \widetilde{q}(x_1,\xi_1+f(\xi_2),\xi_2) + {\cal O}(\eps^2),
\endeq
and from Proposition \ref{prop4.2} we recall that
\begeq
\label{eq5.51}
p_{1,\eps}(x_1,\xi) = {\cal O}(\eps).
\endeq
We can then write $p_{1,\eps} = \eps q_{1,\eps}$, $q_{1,\eps} = {\cal O}(1)$.

\bigskip
\noindent
Let us set
\begeq
\label{eq5.6}
\widetilde{h} = \frac{h}{\sqrt{\eps}},
\endeq
and assume that
\begeq
\label{eq5.61}
\widetilde{h} \ll 1.
\endeq
We have
\begin{multline*}
{P}_{\eps}(x_1,\xi_1+f(\xi_2),\xi_2;h) \\
= p(\xi_1+f(\xi_2),\xi_2) + i\eps \widetilde{q}(x_1,\xi_1+f(\xi_2),\xi_2) + {\cal O}(\eps^2)+ h\eps q_{1,\eps}(x_1,\xi) +
\sum_{j=2}^{\infty} h^j p_{j,\eps} \\
= p(\xi_1+f(\xi_2),\xi_2)
+ \eps\left(i\widetilde{q}(x_1,\xi_1+f(\xi_2),\xi_2) + {\cal O}(\eps) + \widetilde{h}\eps^{1/2} q_{1,\eps} + \sum_{j=2}^{\infty} \widetilde{h}^j
\eps^{\frac{j}{2}-1} p_{j,\eps}\right).
\end{multline*}
Here, according to (\ref{eq5.2}),
$$
p(\xi_1+f(\xi_2),\xi_2) = p(f(\xi_2),\xi_2) + g(\xi_1+f(\xi_2),\xi_2) \xi_1^2, \quad g(0)>0.
$$
It follows that on the operator level, we have
\begin{multline}
\label{eq5.7}
{P}_{\eps}(x_1,f(\xi_2) + hD_{x_1},\xi_2;h) = p(f(\xi_2),\xi_2) + g(f(\xi_2)+hD_{x_1},\xi_2) (hD_{x_1})^2 \\
+ i\eps \left(\widetilde{q}(x_1,f(\xi_2) + hD_{x_1},\xi_2) + {\cal O}(\eps + \widetilde{h}\eps^{1/2} + \widetilde{h}^2)\right).
\end{multline}
We shall be interested in computing eigenvalues of the one-dimensional operator $P_{\eps}(x_1,f(\xi_2) + hD_{x_1},\xi_2;h)$ in the region
$$
\abs{{\rm Re}\,z - p(f(\xi_2),\xi_2)} \leq {\cal O}(\eps \widetilde{h}),
$$
and directly from (\ref{eq5.7}), using cut-offs of the form $\chi(\xi_1/\sqrt{\eps})$, we see that the corresponding eigenfunctions are microlocally
concentrated to the region where $\xi_1 = {\cal O}(\sqrt{\eps})$, provided that the smallness condition (\ref{eq5.61}) is strengthened to the following one,
\begeq
\label{eq5.701}
\frac{h}{\sqrt{\eps}} \leq h^{\eta}, \quad \eta > 0.
\endeq
It will then be convenient to perform a rescaling of the cotangent variable, corresponding to a suitable change of the semiclassical parameter. Let us write
$$
hD_{x_1} = \sqrt{\eps}\widetilde{h}D_{x_1},
$$
and if $\xi_1$, $\widetilde{\xi}_1$ denote the cotangent variables corresponding to $hD_{x_1}$ and $\widetilde{h}D_{x_1}$, respectively, we have
$$
\xi_1 = \sqrt{\eps}\widetilde{\xi}_1.
$$
It follows that
\begeq
\label{eq5.71}
\frac{1}{\eps} {P}_{\eps}(x_1,f(\xi_2) + hD_{x_1},\xi_2;h)
\endeq
can be viewed as an $\widetilde{h}$--pseudodifferential operator of the form
\begin{multline}
\label{eq5.8}
\frac{1}{\eps} P_{\eps}(x_1,f(\xi_2) + hD_{x_1},\xi_2;h) =
\frac{p(f(\xi_2),\xi_2)}{\eps} \\ + g(f(\xi_2) + \sqrt{\eps}\widetilde{h}D_{x_1},\xi_2) \left(\widetilde{h}D_{x_1}\right)^2
+ i\widetilde{q}(x_1,f(\xi_2) + \sqrt{\eps}\widetilde{h}D_{x_1},\xi_2) + {\cal O}(\eps) \\
+ \widetilde{h} {\cal O}(\sqrt{\eps}) + {\cal O}(\widetilde{h}^2).
\end{multline}
Ignoring the constant term $p(f(\xi_2),\xi_2)/\eps$ in the right hand side, we recognize here essentially a one-dimensional Schr\"odinger operator with a
purely imaginary potential, and to be precise, we can write
$$
\frac{1}{\eps} P_{\eps}(x_1,f(\xi_2) + hD_{x_1},\xi_2;h) = \frac{p(f(\xi_2),\xi_2)}{\eps} + A(x_1,\widetilde{h}D_{x_1},\xi_2, \sqrt{\eps};\widetilde{h}),
$$
where $A(x_1,\widetilde{h}D_{x_1},\xi_2, \sqrt{\eps};\widetilde{h})$ is a well-behaved $\widetilde{h}$--pseudodifferential operator, depending smoothly on
$\xi_2\in {\rm neigh}(0,\real)$ and $\sqrt{\eps}\geq 0$, with the leading symbol
\begeq
\label{eq5.81}
g(f(\xi_2) + \sqrt{\eps}\xi_1,\xi_2)\xi_1^2 + i\widetilde{q}(x_1,f(\xi_2) + \sqrt{\eps}\xi_1,\xi_2) + {\cal O}(\eps),
\endeq
and with a subprincipal symbol which is ${\cal O}(\sqrt{\eps})$. Here we have dropped the tilde from the notation for the cotangent variable corresponding to
$\widetilde{h}D_{x_1}$, and let us also recall that the operator (\ref{eq5.8}) is to be considered microlocally in the region where $\xi_1 = {\cal O}(1)$.
The function $g$ in (\ref{eq5.81}) satisfies $g>0$.

\medskip
\noindent
{\it Remark}. If (\ref{eq5.51}) is no longer assumed, we can write, assuming that $h/\eps \ll 1$,
\begin{multline*}
{P}_{\eps}(x_1,\xi_1+f(\xi_2),\xi_2;h) = p(\xi_1+f(\xi_2),\xi_2) + i\eps \widetilde{q}(x_1,\xi_1+f(\xi_2),\xi_2) \\
+ {\cal O}(\eps^2)+ h p_{1,\eps}(x_1,\xi) +
\sum_{j=2}^{\infty} h^j p_{j,\eps}(x_1,\xi) \\
= p(\xi_1+f(\xi_2),\xi_2)
+ i\eps\left(\widetilde{q}(x_1,\xi_1+f(\xi_2),\xi_2) + {\cal O}\left(\eps + \frac{h}{\eps}\right) + \sum_{j=2}^{\infty} \widetilde{h}^j \eps^{j/2-1}
p_{j,\eps}\right),
\end{multline*}
and we can then view $h/\eps$ as an additional small parameter. As will be seen in Section 6, some pseudospectral considerations will force us to assume that
$\eps/h$ should not be too large, and for that reason, in this work we make the assumption (\ref{st.24}), leading to (\ref{eq5.51}).

\bigskip
\noindent
The discussion pursued in this section so far indicates that the spectral analysis of the original operator $P_{\eps}$ should reduce to
that for a family of $\widetilde{h}$--pseudodifferential operators on ${\bf T}$, with leading symbols of the form (\ref{eq5.81}). Letting $\eps = 0$
in (\ref{eq5.81}) for a while and suppressing the parameter $\xi_2$ altogether, we shall now pause to make a digression, in order to recall
semiclassical asymptotics for the low lying eigenvalues of non-selfadjoint $h$--pseudodifferential operators with double characteristics. In doing so, we shall
follow the analysis of~\cite{HiPrSt}, which in turn follows~\cite{HeSjSt} closely. Let us also remark that in the present one-dimensional case, the quadratic
approximations along the double characteristics are elliptic and consequently, our discussion is considerably simplified, when compared
with~\cite{HiPrSt},~\cite{HeSjSt}.

\bigskip
\noindent
Let $P_0(x,hD_x;h): C^{\infty}({\bf T}) \rightarrow C^{\infty}({\bf T})$ be such that $P_0(x,hD_x;h) \in {\rm Op}_h^w(S(\langle{\xi\rangle}^2))$, and assume that
the semiclassical leading symbol of $P_0$ is of the form
\begeq
\label{eq5.91}
p_0(x,\xi) = \xi^2 + i V(x),
\endeq
where $V\in C^{\infty}({\bf T};\real)$. Assume also, for simplicity, that the subprincipal symbol of $P_0$ vanishes.
Let us assume that if $a = {\rm min}\, V$ then $V^{-1}(a) = \{x_0\}$ with $V''(x_0)>0$. We are interested in the eigenvalues of $P_0$ in an open disc
$\{z\in \comp; \abs{z - ia} < Ch\}$, for some $C>0$ fixed and all $h>0$ small enough, and to that end we consider the operator
\begeq
\label{eq5.10}
P(x,hD_x;h) = (1-i)(P_0(x,hD_x;h) - ia),
\endeq
whose leading symbol $p(x,\xi) = (1-i)(p_0(x,\xi) -ia)$ is such that
$$
{\rm Re}\, p(x,\xi) = \xi^2 + V(x) -a \geq 0
$$
is elliptic for large $\xi$ and vanishes precisely at the point $(x_0,0)\in T^*{\bf T}$. In a neighborhood of $(x_0,0)$ we have
\begeq
\label{eq5.11}
p(x+x_0,\xi) = q(x,\xi) + {\cal O}((x,\xi)^3), \quad (x,\xi)\rightarrow (0,0),
\endeq
where $q$ is a quadratic form, such that $\Re q > 0$. When determining the eigenvalues of $P(x,hD_x;h)$ in an ${\cal O}(h)$--neighborhood of $0$,
naturally only the behavior of the operator in a small neighborhood of $(x_0,0)$ matters, and by composing $p$ with an inverse of the translation
\begeq
\label{eq5.12}
\kappa: {\rm neigh}((x_0,0),T^*{\bf T}) \rightarrow {\rm neigh}((0,0),T^*\real), \quad \kappa((x_0,0)) = (0,0),
\endeq
we obtain an $h$--pseudodifferential operator
\begeq
\label{eq5.15}
P(x,\xi; h) \sim \sum_{j=0}^{\infty} h^j p_j(x,\xi),\quad p_1 = 0,
\endeq
defined microlocally near $(0,0)\in T^*\real$, such that the leading symbol $p_0 = p$ satisfies
\begeq
\label{eq5.16}
p(x,\xi) = q(x,\xi) + {\cal O}((x,\xi)^3),
\endeq
where $q$ is quadratic with
\begeq
\label{eq5.16.1}
{\rm Re}\, q>0.
\endeq
We extend $P(x,\xi;h)$ to be globally defined on $\real^2$ as an element of the symbol class $S(1)$, such that
\begeq
\label{eq5.17}
{\rm Re}\, p(x,\xi) \geq 0, \quad \left({\rm Re}\, p\right)^{-1}(0) = \{(0,0)\},
\endeq
and such that
\begeq
\label{eq5.18}
{\rm Re}\, p(x,\xi) \geq \frac{1}{C},\quad \abs{(x,\xi)} \geq C > 0.
\endeq
An application of Theorem 1.1 of~\cite{HiPrSt} allows us to conclude that the following result holds, which we state directly for the operator $P_0(x,hD_x;h)$.
See also~\cite{Hi04} for related results in the analytic case.

\begin{theo}
\label{theo5.1}
Let the operator $P_0(x,hD_x;h): C^{\infty}({\bf T})\rightarrow C^{\infty}({\bf T})$ have the principal symbol of the form {\rm (\ref{eq5.91})}, a vanishing
subprincipal symbol, and let us assume that if $a = {\rm min}\,V$ then $V^{-1}(a) = \{x_0\}$ with $b:= V''(x_0)>0$. Let $C>0$. Then there exists $h_0>0$ such that
for all $0< h \leq h_0$, the spectrum of the operator $P_0(x,hD_x;h)$ in the open disc in the complex plane $D(ia,Ch)$ is given by the simple eigenvalues of
the form,
\begeq
\label{eq5.19}
z_{k} \sim ia + h \left(\lambda_{k,0} + h \lambda_{k,1} + h^{2} \lambda_{k,2} +\ldots\right).
\endeq
Here $\lambda_{k,0}$ are the eigenvalues in $D(0,C)$ of the elliptic quadratic operator
$$
q^w(x,D_x) = D_x^2 + i\frac{b}{2} x^2,
$$
acting on $L^2(\real)$, which are given by
$$
\lambda_{k,0} = \left(\frac{b}{2}\right)^{1/2}e^{i\pi/4}(2k+1),\quad k\in \nat,\quad k = {\cal O}(1).
$$
\end{theo}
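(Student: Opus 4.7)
\medskip
\noindent
\textbf{Proof sketch.} The plan is to reduce Theorem \ref{theo5.1} to an application of Theorem 1.1 of~\cite{HiPrSt}, through an explicit conjugation, a microlocalization near $(x_0,0)\in T^*{\bf T}$, and a direct spectral computation for the quadratic model. First I conjugate the problem by the rotation $(1-i)$ and the shift $ia$, considering $P(x,hD_x;h) = (1-i)(P_0(x,hD_x;h) - ia)$, whose leading symbol $p(x,\xi) = (1-i)(\xi^2 + i(V(x)-a))$ has $\Re p = \xi^2 + V(x) - a \geq 0$, vanishing precisely at the single point $(x_0,0)$. The eigenvalues of $P_0$ in $D(ia, Ch)$ correspond bijectively, with the same multiplicities, to those of $P$ in $D(0, C'h)$ for a suitable $C' > 0$.

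Next I microlocalize near $(x_0,0)$. Away from this point $\Re p$ is bounded from below by $1/\mathcal{O}(1)$ on compact sets, while the order function $\langle{\xi}\rangle^2$ controls the region of large $\xi$; sharp G\aa rding and cutoff arguments show that the eigenfunctions of $P$ with eigenvalues in $D(0, C'h)$ are microlocally concentrated near $(x_0,0)$. Composing with the translation $\kappa$ in (\ref{eq5.12}), the problem becomes local on $T^*\real$. I then extend the resulting symbol globally to $\real^2$ so that (\ref{eq5.17}) and (\ref{eq5.18}) hold, and Taylor expand at the origin, obtaining the quadratic part
$$
q(x,\xi) = (1-i)\left(\xi^2 + i\frac{b}{2}x^2\right),
$$
with $\Re q > 0$ away from the origin, so that $q^w(x,D_x)$ on $L^2(\real)$ is globally elliptic with discrete spectrum.

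With these reductions in place I invoke Theorem 1.1 of~\cite{HiPrSt}: for a symbol in the class (\ref{eq5.15}) satisfying (\ref{eq5.17}) and (\ref{eq5.18}) that theorem yields the complete asymptotic expansion of the eigenvalues in any fixed disc $D(0,Ch)$ in terms of the eigenvalues of the quadratic model $q^w$, together with $h$-corrections coming from the cubic and higher-order Taylor coefficients of $p$ at $(x_0,0)$, determined inductively by a quantum Birkhoff normal form. The eigenvalues of the model $D_x^2 + i(b/2)x^2$ on $L^2(\real)$ are computed by a metaplectic (complex rotation) conjugation reducing it to a multiple of the standard harmonic oscillator, yielding $(b/2)^{1/2} e^{i\pi/4}(2k+1)$, $k\in \nat$. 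Undoing the rotation by $(1-i)$ and the shift by $ia$ then produces the claimed expansion (\ref{eq5.19}) for $P_0$.

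The principal difficulty is the non-selfadjoint, pseudospectral nature of the operator: eigenvalues cannot be extracted from spectral projections applied to approximate eigenfunctions without quantitative control of the resolvent. This is addressed, following~\cite{HeSjSt} and~\cite{HiPrSt}, by setting up a globally well-posed Grushin problem whose auxiliary finite-dimensional spaces are spanned by Hermite-type quasimodes built from the eigenfunctions of $q^w$, cut off near $(x_0,0)$. The required resolvent bounds $\norm{(P-z)^{-1}} \leq \mathcal{O}(h^{-1})$ outside the expected eigenvalue cluster follow from subelliptic estimates for $\Re p \geq 0$ together with the positivity of the transversal Hessian $b>0$. The eigenvalues of $P$ in $D(0,C'h)$ are then identified with the zeros of the effective Hamiltonian of the Grushin problem, which admits a classical asymptotic expansion in integer powers of $h$.
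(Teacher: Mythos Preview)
Your proof sketch is correct and follows essentially the same route as the paper: the paper also passes to $P=(1-i)(P_0-ia)$, microlocalizes near $(x_0,0)$, translates to the origin in $T^*\real$, extends so that (\ref{eq5.17})--(\ref{eq5.18}) hold, and then invokes Theorem~1.1 of~\cite{HiPrSt}, whose proof is recalled via the Grushin problem on the FBI side with interior/exterior estimates exactly as you indicate. The only cosmetic difference is that the paper keeps the $(1-i)$ factor inside the quadratic model $q$ of (\ref{eq5.16})--(\ref{eq5.16.1}) and then strips it off when stating the eigenvalues of $q^w=D_x^2+i(b/2)x^2$ in the theorem, whereas you make this undoing of the rotation explicit at the end.
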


\medskip
\noindent
{\it Remark}. Theorem \ref{theo5.1} continues to be valid when the operator $P_0(x,hD_x;h)$ acts on an $L^2$--space of Floquet periodic functions on ${\bf T}$
and indeed, the eigenvalues described in this result do not depend on the Floquet conditions, modulo ${\cal O}(h^{\infty})$.

\bigskip
\noindent
Coming back to the operator in (\ref{eq5.8}), with the leading symbol (\ref{eq5.81}), we shall next have to extend the result of Theorem \ref{theo5.1} to the parameter dependent case, and to this end it will be convenient to
recall briefly the main steps in the proof of Theorem \ref{theo5.1}. Let $P = P(x,hD_x;h)$ be an $h$--pseudo\-dif\-ferential ope\-rator on $\real$ satis\-fying
(\ref{eq5.15}) -- (\ref{eq5.18}). Following~\cite{HiPrSt}, let us recall that the proof of Theorem \ref{theo5.1} proceeds by
constructing a well-posed Grushin problem for the operator $P$, of the form
\begeq
\label{eq5.20}
(P-hz)u + R_- u_- = v,\quad R_+ u = v, \quad z\in {\rm neigh}(\lambda_0,\comp),
\endeq
in the space $L^2(\real)\times \comp$. Here $\lambda_0$ is an eigenvalue of $q^w(x,D_x)$ such that $\abs{\lambda_0} < C$. The operators
$R_-: \comp \rightarrow L^2$ and $R_+: L^2 \rightarrow \comp$ are defined as follows,
\begeq
\label{eq5.20.01}
R_- u_- = u_- e,\quad R_+u = (u,f)_{L^2},
\endeq
where $e$ is an eigenfunction of $q^w(x,hD_x)$ corresponding to the eigenvalue $h\lambda_0$, and $f$ is an eigenfunction of the adjoint operator
$\overline{q}(x,hD_x)$, corresponding to the eigenvalue $h\overline{\lambda}_0$.

\medskip
\noindent
The verification of the well-posedness of (\ref{eq5.20}) consists of two steps, both carried out after a metaplectic FBI transform,
\begeq
\label{eq5.20.1}
T: L^2(\real) \rightarrow H_{\Phi_0}(\comp).
\endeq
Here
$$
H_{\Phi_0}(\comp) = {\rm Hol}(\comp) \cap L^2(\comp; e^{-2\Phi_0/h}L(dx)),
$$
and $\Phi_0$ is a suitable strictly subharmonic quadratic form. In the first step, we concentrate on the region $\abs{x}\leq h^{\rho}$, $x\in \comp$,
for some $1/3 < \rho < 1/2$. Arguing as in~\cite{HiPrSt}, we obtain the following a priori estimate for the problem (\ref{eq5.20}), based on the quadratic
approximation of $P$ near the origin --- see formula (3.25) in~\cite{HiPrSt},
\begin{multline}
\label{eq5.21}
\norm{(h+\abs{x}^2)^{1/2}\chi_0\left(\frac{x}{h^{\rho}}\right) u} + h^{-1/2}\abs{u_-} \\
\leq C\norm{(h+\abs{x}^2)^{-1/2}\chi_0\left(\frac{x}{h^{\rho}}\right) v} +
C\norm{(h+\abs{x}^2)^{-1/2}\chi_0\left(\frac{x}{h^{\rho}}\right)(P-Q)u} \\
+ {\cal O}(h^{1/2})\abs{v_+} + C\sqrt{\frac{h}{h^{2\rho}}} \norm{(h+\abs{x}^2)^{1/2}1_K\left(\frac{x}{h^{\rho}}\right)u}.
\end{multline}
Here $u$, $v\in H_{\Phi_0}(\comp)$, the norms are taken in the space $L^2(\comp; e^{-2\Phi_0/h}L(dx))$, and we have also written $P$ for the conjugated operator
$T P T^{-1}$. The function $\chi_0\in C^{\infty}_0(\comp)$ is fixed, with $\chi_0=1$ near $0$, and $K$ is a fixed compact neighborhood of
${\rm supp}(\nabla \chi_0)$, $0\notin K$. Furthermore, $Q = T q^w(x,hD_x) T^{-1}$, and therefore, as explained in~\cite{HiPrSt}, we have
\begeq
\label{eq5.21.1}
\norm{(h+\abs{x}^2)^{-1/2}\chi_0\left(\frac{x}{h^{\rho}}\right) ({P}-{Q})u} = {\cal O}\left(\frac{h^{3\rho}}{h^{1/2}}\right)\norm{u}.
\endeq
Using (\ref{eq5.21}) and (\ref{eq5.21.1}), we obtain
\begin{multline}
\label{eq5.22}
h \norm{\chi_0\left(\frac{x}{h^{\rho}}\right)u}^2 + h^{-1}\abs{u_-}^2 \\
\leq \frac{{\cal O}(1)}{h}\norm{v}^2 + {\cal O}(h^{6\rho-1})\norm{u}^2 + {\cal O}(h)\abs{v_+}^2 +{\cal O}(h)\norm{1_K\left(\frac{x}{h^{\rho}}\right)u}^2.
\end{multline}
Notice that the lower bound $\rho > 1/3$ implies that here $h^{6\rho-1} \ll h$.

\bigskip
\noindent
In the second step of the proof, we consider the exterior region, $\abs{x}\geq h^{\rho}$, and here we use that
$$
{\rm Re}\, p\left(x,\frac{2}{i}\frac{\partial \Phi_0}{\partial x}(x)\right) \geq \frac{h^{2\rho}}{C}, \quad C>0.
$$
Exploiting the sharp G\aa{}rding inequality in the form of a quantization-multiplication formula, as explained in~\cite{HiPrSt}, see also~\cite{Viola12},
we obtain the following exterior a priori estimate for the problem (\ref{eq5.20}),
\begin{multline}
\label{eq5.23}
h^{2\rho} \int \chi\left(\frac{x}{h^{\rho}}\right) \abs{u(x)}^2
e^{-2\Phi_0(x)/h}\, L(d{x}) \leq {\cal O}(1) \norm{v}\, \norm{u}\\
+ {\cal O}(h^{\infty}) \abs{u_-}\,\norm{u}
+ {\cal O}(h)\norm{u}^2.
\end{multline}
Here $\chi\in C^{\infty}_b(\comp; [0,1])$ vanishes near $x=0$ and $\chi = 1 $ for large $x$. Assuming that $1/3 < \rho < 1/2$, the bounds (\ref{eq5.22}) and
(\ref{eq5.23}) can be glued together, and we get the a priori estimate
\begeq
\label{eq5.24}
h\norm{u} + \abs{u_-} \leq {\cal O}(1)\left(\norm{v} + h\abs{v_+}\right),
\endeq
and the consequent well-posedness of the Grushin problem (\ref{eq5.20}). Asymptotic expansions for the eigenvalues of $P$ follow exactly as explained
in~\cite{HeSjSt},~\cite{HiPrSt}. In the present one-dimensional situation, the eigenvalues are simple and only integer powers of $h$ occur in the expansions
(\ref{eq5.19}).

\bigskip
\noindent
Turning the attention to the parameter-dependent case, let $P_{\eps} = P_{\eps}(x,hD_x;h)\in {\rm Op}_h^w(S(1))$, $\eps \geq 0$, be an $h$--pseudodifferential
operator depending smoothly on $\sqrt{\eps}$, such that $P_{\eps = 0} = P$ satisfies (\ref{eq5.15}), (\ref{eq5.16}), (\ref{eq5.16.1}), (\ref{eq5.17}),
(\ref{eq5.18}). In particular, the leading symbol $p_{\eps}$ of $P_{\eps}$ satisfies
\begeq
\label{eq5.25}
p_{\eps}(x,\xi) = p(x,\xi) + {\cal O}(\sqrt{\eps})
\endeq
in the sense of symbols in $S(1)$, and the subprincipal symbol of $P_{\eps}$ is ${\cal O}(\sqrt{\eps})$. Assume also that near $(0,0)$, (\ref{eq5.25}) improves to
\begeq
\label{eq5.26}
p_{\eps}(x,\xi) = p(x,\xi) + {\cal O}\left(\sqrt{\eps}\abs{\xi} + \eps\right),
\endeq
see also (\ref{eq5.81}). We would like to conclude that the Grushin problem (\ref{eq5.20}) with $P$ replaced by $P_{\eps}$ remains well-posed, provided that
$\eps>0$ is not too large, and to that end, we shall simply inspect the two steps above.

\medskip
\noindent
In the region $\abs{x}\leq h^{\rho}$, we argue as above, with $P$ replaced by $P_{\eps}$, and using (\ref{eq5.26}), together with the fact that subprincipal
symbol of $P_{\eps}$ is ${\cal O}(\sqrt{\eps})$, we see that we get an additional term in the right hand side of (\ref{eq5.21}) of the form
\begeq
\label{eq5.27}
\norm{(h+\abs{x}^2)^{-1/2}\chi_0\left(\frac{x}{h^{\rho}}\right) ({P}_{\eps}-{P})u} = {\cal O}\left(\frac{\sqrt{\eps} h^{\rho} + \eps}{h^{1/2}}\right)\norm{u}.
\endeq
Here we also assume that we have chosen the FBI transform in (\ref{eq5.20.1}) so that (\ref{eq5.26}) holds on the transform side.
As for the exterior region $\abs{x}\geq h^{\rho}$, replacing $P$ by $P_{\eps}$, we get an additional term in the right hand side of (\ref{eq5.23}), given by
\begeq
\label{eq5.27.1}
{\cal O}(1)\sqrt{\eps}\norm{u}^2.
\endeq
It follows that to absorb the two extra terms (\ref{eq5.27}), (\ref{eq5.27.1}), we need to meet the following conditions,
$$
\frac{\eps^{1/2} h^{\rho} + \eps}{h^{1/2}} \ll h^{1/2},
$$
and
$$
\sqrt{\eps} \ll h^{2\rho}.
$$
The first condition is satisfied provided that
$$
\eps \ll h^{2-2\rho},
$$
since $\rho < 1/2$, and the second one holds when
$$
\eps \ll h^{4\rho}.
$$
We conclude that the Grushin problem (\ref{eq5.20}) remains well-posed when $P$ is replaced by $P_{\eps}$, provided that
\begeq
\label{eq5.28}
\eps \ll h^{4\rho},
\endeq
since $1/3 < \rho < 1/2$. Combining this observation with the standard perturbation theory for eigenvalues of multiplicity one~\cite{Kato}, we obtain
the following result.
\begin{prop}
\label{prop5.2}
Let $P_{\eps}(x,hD_x;h)$, $\eps \geq 0$, be a smooth function of $\sqrt{\eps}$ with values in ${\rm Op}_h^w(S(1))$, such that when $\eps=0$, we have the properties
{\rm (\ref{eq5.15})}--{\rm (\ref{eq5.18})}. Assume that {\rm (\ref{eq5.26})} holds. Then for $\eps \leq h^{\frac{4}{3} + \eta}$, $\eta >0$, the eigenvalues of
$P_{\eps}(x,hD_x;h)$ in the region $\{z\in \comp; \abs{z} < Ch\}$ are given by the simple eigenvalues of the form
$$
z_k \sim h \left(\lambda_{k,0}(\sqrt{\eps}) + h\lambda_{k,1}(\sqrt{\eps}) + \ldots\,\right), \quad k\in \nat, \quad k = {\cal O}(1),
$$
where $\lambda_{k,j}(\sqrt{\eps})$ are smooth functions of $\sqrt{\eps}\geq 0$, $j\geq 0$, with $\lambda_{k,0}(0)$ being the eigenvalues of the quadratic
operator $q^w(x,D_x)$, described explicitly in Theorem {\rm \ref{theo5.1}}. When $z\in \comp$ is such that $\abs{z} < Ch$ and
${\rm dist}(z,{\rm Spec}\,(P_{\eps})) \geq h/{\cal O}(1)$, we have
\begeq
\label{eq5.28.1}
\left(z-P_{\eps}\right)^{-1} = \frac{{\cal O}(1)}{h}: L^2 \rightarrow L^2.
\endeq
\end{prop}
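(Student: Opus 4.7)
The plan is to follow the Grushin problem strategy reviewed in the excerpt for the unperturbed operator $P = P_{\eps=0}$ and to check that the same well-posedness arguments go through, uniformly for $\sqrt{\eps}$ in a neighborhood of $0$, provided the smallness condition on $\eps$ holds. Concretely, for each eigenvalue $\lambda_0$ of the elliptic quadratic operator $q^w(x,D_x)$ with $\abs{\lambda_0}<C$, one sets up the perturbed Grushin problem
\begin{equation*}
(P_{\eps}-hz)u + R_-u_- = v,\quad R_+u = v_+,\quad z\in {\rm neigh}(\lambda_0,{\bf C}),
\end{equation*}
with $R_{\pm}$ exactly as in (\ref{eq5.20.01}), i.e.\ built from eigenfunctions of $q^w(x,D_x)$ and its adjoint, which are independent of $\eps$.

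First, I would transplant everything to the FBI side via $T: L^2({\bf R})\to H_{\Phi_0}({\bf C})$, choosing $T$ so that the conjugated symbol of $P_\eps$ still satisfies (\ref{eq5.26}), and split the phase space into the interior $\abs{x}\le h^\rho$ and the exterior $\abs{x}\ge h^\rho$, with $1/3<\rho<1/2$ to be fixed at the end. In the interior, I would repeat verbatim the derivation of (\ref{eq5.22}), noting that the only modification is the replacement of (\ref{eq5.21.1}) by the perturbed estimate (\ref{eq5.27}); this uses crucially the improved bound (\ref{eq5.26}) and the ${\cal O}(\sqrt{\eps})$ subprincipal symbol. In the exterior, the sharp G\aa rding argument leading to (\ref{eq5.23}) produces an extra contribution of size ${\cal O}(\sqrt{\eps})\norm{u}^2$ as in (\ref{eq5.27.1}), since ${\rm Re}\,p_\eps\bigl(x,(2/i)\partial_x\Phi_0\bigr)$ differs from ${\rm Re}\,p$ by ${\cal O}(\sqrt\eps)$ on the exterior region where $\abs{x}\sim 1$.

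The main technical point, as already flagged in the excerpt, is balancing the two absorption conditions
\begin{equation*}
\frac{\sqrt\eps\, h^\rho+\eps}{h^{1/2}}\ll h^{1/2},\qquad \sqrt\eps \ll h^{2\rho},
\end{equation*}
which require $\eps\ll h^{2-2\rho}$ and $\eps\ll h^{4\rho}$ respectively. Optimizing by taking $\rho$ just above $1/3$ gives the stated range $\eps\le h^{4/3+\eta}$. Once these are verified, the same glueing as in the $\eps=0$ case produces the a priori estimate (\ref{eq5.24}) for $P_\eps-hz$, whence the Grushin problem is well-posed and yields an effective Hamiltonian $E_{-+}(z,\sqrt\eps,h)$ that is smooth in $\sqrt\eps\ge 0$, with complete asymptotic expansion in integer powers of $h$ inherited from the construction in \cite{HeSjSt}, \cite{HiPrSt}.

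Finally, the eigenvalues $z_k$ in $D(0,Ch)$ correspond to the simple zeros of $E_{-+}$, and by the implicit function theorem applied to $E_{-+}(z,\sqrt\eps,h)=0$ around the unperturbed simple eigenvalues $h\lambda_{k,0}(0)$ of Theorem \ref{theo5.1}, I obtain smooth functions $\lambda_{k,j}(\sqrt\eps)$ with $\lambda_{k,j}(0)$ as in that theorem. The resolvent bound (\ref{eq5.28.1}) is a direct consequence of the well-posedness estimate: if ${\rm dist}(z,{\rm Spec}(P_\eps))\ge h/{\cal O}(1)$, then $E_{-+}(z,\sqrt\eps,h)$ is bounded from below by $h/{\cal O}(1)$, and the standard Schur complement formula for the Grushin block matrix expresses $(z-P_\eps)^{-1}$ in terms of $E_{-+}^{-1}$ and the solution operators of the Grushin problem, yielding an ${\cal O}(1/h)$ bound as claimed.
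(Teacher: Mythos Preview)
Your proposal is correct and follows essentially the same approach as the paper: the paper likewise inspects the two steps of the Grushin argument from \cite{HiPrSt}, obtains the additional error terms (\ref{eq5.27}) and (\ref{eq5.27.1}), and arrives at the condition $\eps \ll h^{4\rho}$ with $1/3<\rho<1/2$. The only cosmetic difference is that the paper phrases the smooth $\sqrt{\eps}$--dependence of the eigenvalues as an appeal to Kato perturbation theory for simple eigenvalues, whereas you extract it from the smoothness of $E_{-+}$ via the implicit function theorem; these are equivalent, and your Schur complement argument for (\ref{eq5.28.1}) is the standard one that the paper leaves implicit.
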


\bigskip
\noindent
In our considerations, see (\ref{eq5.81}), when applying Proposition \ref{prop5.2}, we should replace the semiclassical parameter $h$ by
$\widetilde{h} = h/\sqrt{\eps}$, which in view of (\ref{eq5.28}) leads to the condition
\begeq
\label{eq5.29}
\eps \ll \widetilde{h}^{4\rho}, \quad 1/3 < \rho < 1/2,
\endeq
so that
$$
\eps \ll h^{\frac{4\rho}{2\rho +1}}.
$$
When $\rho = 1/3$, the power in the right hand side $=4/5$, and it follows that we have the well-posedness of the Grushin problem provided that
\begeq
\label{eq5.30}
\eps \leq {\cal O}(h^{\frac{4}{5} + \eta}), \quad \eta >0.
\endeq

\medskip
\noindent
{\it Remark.} In the proof of Proposition 6.2 above, the presence of the parameter $\sqrt{\eps}$ was treated by a direct perturbation argument, leading to the upper
bound (\ref{eq5.28}). The purpose of this remark is to outline an alternative approach to the parameter-dependent case, leading to sharper bounds on $\eps$.
While sharpening the result of Proposition 6.2 below would not lead to an improvement in Theorem 2.1, which is the main result of this work, we believe that the
alternative approach sketched below may be of some independent interest. Since its precise realization is likely to demand a greater technical investment, the
argument developed in this remark will be quite brief and we hope to be able to develop it further in a future work.

\medskip
\noindent
Let $P_{\eps}(x,\xi;h)$ be a real analytic function of $\eps \in {\rm neigh}(0,\real)$ with values in the space of bounded holomorphic functions in a tubular
neighborhood of $\real^2$, such that as $h\rightarrow 0^+$,
$$
P_{\eps}(\rho;h) \sim p_{\eps}(\rho) + hp_{1,\eps}(\rho) + \ldots, \quad \rho = (x,\xi).
$$
For $\eps = 0$, let us assume that the leading symbol $p := p_0$ is such that ${\rm Re}\, p\geq 0$ is elliptic at infinity, vanishing precisely at $\rho = 0$.
Assume furthermore that we have,
$$
p(\rho) = q(\rho) + {\cal O}(\rho^3), \quad \rho \rightarrow 0,
$$
where $q$ is quadratic with ${\rm Re}\, q$ positive definite. In particular, $\rho = 0$ is a non-degenerate critical point for $p$ and an application of the
implicit function theorem shows that for $\eps$ small, $p_{\eps}$ has a non-degenerate critical point $\rho(\eps)$ in the complex domain, depending analytically
on $\eps$, with $\rho(\eps) = {\cal O}(\eps)$. Passing to the FBI transform side by means of a metaplectic FBI transform $T$, as in (\ref{eq5.20.1}), let us
continue to write $\rho(\eps) = (x(\eps),\xi(\eps)) = {\cal O}(\eps)$ for the image of the critical point $\rho(\eps)$ under the complex linear canonical
transformation $\kappa_T$ associated to $T$.

\medskip
\noindent
We know from~\cite{Sj82} that $\kappa_T(\real^2) = \Lambda_{\Phi_0} = \{(x,(2/i)\partial_x \Phi_0(x));\, x\in \comp\}$, where $\Phi_0$ is the strictly subharmonic
quadratic form introduced in (\ref{eq5.20.1}). We shall now discuss the problem of constructing a weight function $\Phi_{\eps}\in C^{\infty}(\comp)$ such that
\begeq
\label{eq5.30.1}
\Phi_{\eps} = \Phi_0 + {\cal O}(h), \quad \abs{\nabla^2\left(\Phi_{\eps} - \Phi_0\right)}\ll 1,
\endeq
and with $\rho(\eps) \in \Lambda_{\Phi_{\eps}} = \{(x,(2/i)\partial_x \Phi_{\eps}(x));\, x\in \comp\}$. The function $\Phi_{\eps}$ is then strictly subharmonic and
if we set $H_{\Phi_{\eps}}(\comp)  = {\rm Hol}(\comp) \cap L^2(\comp; e^{-2\Phi_{\eps}/h}L(dx))$, then we have $H_{\Phi_{\eps}} = H_{\Phi_0}$, with
uniformly equivalent norms. To get the complete asymptotic expansions of the eigenvalues of $P_{\eps}$ in $D(p_{\eps}(\rho_{\eps}), Ch)$, as in Proposition 6.2,
one should then work with the operator $P_{\eps}$ acting on the space $H_{\Phi_{\eps}}$. We need
$$
\xi(\eps) = \frac{2}{i} \frac{\partial \Phi_{\eps}}{\partial x}(x(\eps)),
$$
and let us notice that
$$
\xi(\eps) - \frac{2}{i} \frac{\partial \Phi_{0}}{\partial x}(x(\eps)) = {\cal O}(\eps).
$$
With $\partial_x \Phi_{\eps}(x(\eps))$ already determined, we try
\begin{multline*}
\Phi_{\eps}(x) = \Phi_0(x) + 2 {\rm Re}\,\biggl((\partial_x \Phi_{\eps}(x(\eps)) - \partial_x \Phi_0(x(\eps)))\cdot (x - x(\eps))\biggr)
\chi \left(\frac{x - x(\eps)}{h^{\alpha}}\right) \\
= \Phi_0(x) + h^{\alpha} \left(\ell_{\eps} \chi\right)\left(\frac{x - x(\eps)}{h^{\alpha}}\right).
\end{multline*}
Here $\chi \in C^{\infty}_0(\comp)$ is a standard cut-off near $0$ and
$$
\ell_{\eps}(y) = 2{\rm Re}\, \biggl((\partial_x \Phi_{\eps}(x(\eps)) - \partial_x \Phi_0(x(\eps)))\cdot y\biggr)
$$
is linear, such that $\ell_{\eps}={\cal O}(\eps)$ as a linear form. Then
$$
\nabla^k \left(\Phi_{\eps} - \Phi_0\right) = {\cal O}(\eps) h^{\alpha - k\alpha},\quad k\geq 0,
$$
and in view of (\ref{eq5.30.1}), we need $\eps h^{\alpha} \leq {\cal O}(h)$, $\eps/h^{\alpha} \ll 1$. We get the conditions $\eps \leq {\cal O}(h^{1-\alpha})$,
$\eps \ll {\cal O}(h^{\alpha})$, and it follows that the optimal choice of $\alpha$ is given by $\alpha = 1/2$. This leads to the condition
$\eps \ll {\cal O}(\sqrt{h})$. In our applications, we should replace $\eps$ by $\sqrt{\eps}$ and $h$ by $\widetilde{h} = h/\sqrt{\eps}$, leading to the condition
$$
\eps \ll \widetilde{h} = \frac{h}{\sqrt{\eps}},
$$
so that we get
\begeq
\label{eq5.30.2}
0\leq \eps \ll {\cal O}(h^{2/3}),
\endeq
which is sharper than (\ref{eq5.30}). One conjectures therefore that the result of Proposition 6.2 extends to this range of $\eps$ and we hope to
return to this observation in a future paper.

\bigskip
\noindent
We shall finish this section by a formal application of Proposition \ref{prop5.2} to the microlocally defined operator $P_{\eps}(x_1,hD_{x_1},\xi_2;h)$ in
(\ref{eq5.3}), acting on $L^2_{\theta_1}({\bf T})$: assume that $\eps > 0$ is such that
$$
\widetilde{h} = \frac{h}{\sqrt{\eps}} \leq h^{\eta},\quad \eta > 0,
$$
and
that (\ref{eq5.30}) holds. It follows that the eigenvalues of $P_{\eps}(x_1,hD_{x_1},\xi_2;h)$ in the region
$$
\abs{z - p(f( \xi_2),\xi_2)-i\eps \langle q\rangle_2(x_1(\xi_2), f(\xi_2),\xi_2)} \leq  {\cal O}(\sqrt{\eps} h)
$$
are given by
\begin{equation}
\label{eq5.31}
\begin{split}
z_{k}=&p(f( \xi_2),\xi_2)+i\eps \langle q\rangle_2(x_1(\xi_2), f(\xi_2),\xi_2)\\
&+\sqrt{\eps}h(\lambda_{k,0} +\lambda _{k,1}\widetilde{h}+\lambda_{k,2} \widetilde{h}^2+\ldots),\quad {\bf N}\ni k\le {\cal O}(1),
\end{split}
\end{equation}
where $\lambda_{k,j} =\lambda_{k,j}(\xi_2,\sqrt{\eps})$, $j\geq 0$, is a smooth function of $\xi_2\in {\rm neigh}(0,\real)$, $\sqrt{\eps}\geq 0$, with
\begin{equation}
\label{eq5.32}
\lambda_{k,0}(\xi_2,0)=e^{i\pi /4}(\partial _{\xi _1}^2p(f(\xi _2),\xi
_2))^{\frac{1}{2}}(\partial _{x_1}^2\langle q\rangle_2(x_1(\xi_2), f(\xi_2),\xi_2))^{\frac{1}{2}}\left(k+\frac{1}{2}\right).
\end{equation}
Here we recall from (\ref{st.23}) that $x_1(\xi_2)\in {\bf T} $ is the unique point of minimum of the function $x_1 \mapsto \langle{q\rangle}_2(x_1, f(\xi_2),\xi_2)$.

\section{Pseudospectral bounds and the global Grushin problem}
\setcounter{equation}{0}
The discussion pursued in the previous section shows that we are able to determine the low-lying eigenvalues of suitable localized one-dimensional operators
$$
P_{\eps}(x_1,hD_{x_1},\xi_2;h)
$$
in (\ref{eq5.3}), occurring in the normal form reduction, provided that the perturbative parameter $\eps$ satisfies
\begeq
\label{eq6.0}
h^{2 -\eta} \leq \eps \leq h^{4/5 + \eta},\quad \eta > 0.
\endeq
The purpose of this section is to complete the proof of Theorem \ref{st1} by constructing a global well-posed Grushin problem for
$P_{\eps}-z$, leading to the description of the eigenvalues in the region described in Theorem \ref{st1}. In doing so, we shall have to strengthen the bounds
in (\ref{eq6.0}), as a consequence of some precise pseudospectral analysis for the family of the one-dimensional non-selfadjoint operators
$P_{\eps}(x_1,hD_{x_1},\xi_2;h)$, with $\xi_2$ playing the role of parameters.

\medskip
\noindent
Our first task is to give a global definition of the $h$--dependent weighted Hilbert space, where the Grushin problem will be studied.
Similarly to~\cite{HiSjVu07}, the weighted space in question will be associated to a globally defined IR-manifold $\Lambda \subset T^*\widetilde{M}$, which is
${\cal O}(\eps)$--close to $T^*M$ and agrees with it outside a compact set. Specifically, the manifold $\Lambda$ will be obtained as an
${\cal O}(\eps^2)$--perturbation of the IR-manifold $\Lambda_{\eps G}$, introduced in (\ref{eq3.32.1}), where the perturbative modification will only take
place in a sufficiently small but fixed neighborhood of the rational torus $\Lambda_0$.

\medskip
\noindent
Let us recall therefore that in Section 4, we have shown that microlocally near the Lagrangian torus $\exp(i\eps H_G)(\Lambda_0)\subset \Lambda_{\eps G}$, the
operator in (\ref{eq3.32.3}) is unitarily equivalent to an analytic $h$--pseudodifferential operator $P_{\varepsilon}$,
defined microlocally near $\xi = 0$ in $T^*{\bf T}^2$ and acting on $L^2_{\theta}({\bf T}^2)$, such that the leading symbol of $P_{\varepsilon}$ is of the form
\begeq
\label{eq6.1}
p_{0}(x,\xi,\eps) = p(\xi) + i\eps \widetilde{q}(x_1,\xi) + {\cal O}(\eps^2) + \eps {\cal O}((\xi_1-f(\xi_2))^N),
\endeq
where $p(\xi)$ is given in (\ref{eq3.2}) and $N\geq 2$ is arbitrarily large but fixed. See also (\ref{eq3.35}) for an illustration of the unitary equivalence
by means of a commutative diagram.

\medskip
\noindent
Let
\begeq
\label{eq6.2}
\kappa_{\eps}^{(N)}: {\rm neigh}(\xi = 0, T^*\widetilde{{\bf T}}^2) \rightarrow {\rm neigh}(\xi = 0, T^*\widetilde{{\bf T}}^2), \quad
\widetilde{{\bf T}}^2 = \comp^2/2\pi \z^2
\endeq
be the holomorphic canonical transformation, introduced in Proposition \ref{prop4.1}. Considering the IR-manifold
$\kappa_{\eps}^{(N)}(T^*{\bf T}^2)\subset T^*\widetilde{{\bf T}}^2$, defined in a
complex neighborhood of $\xi = 0$, we conclude, arguing as in Section 5 in~\cite{HiSjVu07}, that there exists a $C^{\infty}$ strictly plurisubharmonic
function $\Phi_{\eps}(x)$, defined for $x\in \comp^2/2\pi \z^2$, $\abs{{\rm Im}\, x} \leq 1/{{\cal O}(1)}$, such that in the $C^{\infty}$--sense,
$$
\Phi_{\eps}(x) = \Phi_0(x) + {\cal O}(\eps^2),\quad \Phi_0(x) = \frac{1}{2} ({\rm Im}\, x)^2,
$$
and such that the operator
\begeq
\label{eq6.4}
P_{\eps} = {\cal O}(1): T^{-1} H_{\Phi_{\eps}}(\abs{{\rm Im}\,x} < 1/C) \rightarrow T^{-1} H_{\Phi_{\eps}}(\abs{{\rm Im}\,x} < 1/C)
\endeq
is, microlocally near $\kappa_{\eps}^{(N)}({\bf T}^2 \times \{\xi=0\})$, unitarily equivalent to an operator $\widetilde{P}_{\eps}$, given in (\ref{eq4.21}),
(\ref{eq4.22}), acting on $L^2_{\theta}({\bf T}^2)$. Here
$$
T: L^2({\bf T}^2) \rightarrow H_{\Phi_0}(\comp^2 /2\pi \z^2)
$$
is the standard unitary FBI--Bargmann transform on the 2-torus, associated to the quadratic phase function $i(x-y)^2/2$, as discussed in~\cite{HiSj08b}, and we
have written
$$
H_{\Phi_{\eps}}(\Omega) = {\rm Hol}(\Omega) \cap L^2(\Omega, e^{-2\Phi_{\eps}/h} L(dx)),
$$
for $\Omega \subset \comp^2 /2\pi \z^2$ open, including the Floquet periodic versions of the spaces. Let us also point out that the unitary equivalence between the
operators $P_{\eps}$ in (\ref{eq6.4}) and $\widetilde{P}_{\eps}$ is realized by means of a microlocally unitary $h$-Fourier integral operator $U_{\eps}$ in
the complex domain, quantizing the canonical transformation in (\ref{eq6.2}). Similarly to (\ref{eq3.35}), we may illustrate it in a commutative
diagram,
\begeq
\label{eq6.401}
\begin{tikzcd}
P_{\eps}: T^{-1} H_{\Phi_{\eps}}(\abs{{\rm Im}\,x} < 1/C) \arrow{r}  & T^{-1} H_{\Phi_{\eps}}(\abs{{\rm Im}\,x} < 1/C)   \\
\widetilde{P}_{\eps}: L^2_{\theta}({\bf T}^2) \arrow{r}  \arrow[swap]{u}{U_{\eps}}& L^2_{\theta}({\bf T}^2) \arrow{u}{U_{\eps}}
\end{tikzcd}
\endeq

\medskip
\noindent
In particular, according to Proposition \ref{prop4.1}, the leading symbol of $\widetilde{P}_{\eps}$ is independent of $x_2$, modulo
${\cal O}(\eps^{N+1} + \eps (\xi_1 - f(\xi_2))^N)$. The subprincipal symbol of $\widetilde{P}_{\eps}$ is ${\cal O}(\eps)$.

\medskip
\noindent
{\it Remark}. From~\cite{HiSjVu07}, we may recall that writing
$$
\Lambda_{\Phi_{\eps}}: \quad \xi = \frac{2}{i} \frac{\partial \Phi_{\eps}}{\partial x}(x),\quad \abs{{\rm Im}\, x}\leq \frac{1}{{\cal O}(1)},
$$
we have $\Lambda_{\Phi_{\eps}} = \kappa_T \circ \kappa_{\eps}^{(N)} \left(T^*{\bf T}^2\right)$, where the canonical transformation $\kappa_T$ associated to $T$
is given by
$$
T^*\widetilde{{\bf T}}^2 \ni (y,\eta) \mapsto (y-i\eta,\eta) = (x,\xi) \in T^*\widetilde{{\bf T}}^2.
$$
Let us also remark that the writing (\ref{eq6.4}) is somewhat informal, and a precise statement is obtained by considering the action of the conjugated operator
$T P_{\eps} T^{-1}$ on the space $H_{\Phi_{\eps}}(\abs{{\rm Im}\,x} < 1/C)$, see also~\cite{HiSjVu07}.

\bigskip
\noindent
We obtain a globally defined IR-manifold $\Lambda \subset T^*\widetilde{M}$, which is $\eps$-close to $T^*M$ everywhere in the $C^{\infty}$--sense, agrees with
that set away from $p^{-1}(0)$, and in a complex neighborhood of $\Lambda_0$, it is obtained by replacing
$$
\exp(i\eps H_G) \circ \kappa_0^{-1}(T^*{\bf T}^2)
$$
by
\begeq
\label{eq6.5}
\exp(i\eps H_G) \circ \kappa_0^{-1} \circ \kappa_{\eps}^{(N)}(T^*{\bf T}^2),
\endeq
which amounts to an ${\cal O}(\eps^2)$--deformation $\Lambda_{\eps G}$ in a neighborhood of $\Lambda_0$. Here we recall the holomorphic canonical transformation
$\exp(i\eps H_G)$, identifying $\Lambda_{\eps G}$ and $T^*M$ in a neighborhood of $\Lambda_0$, and the real analytic canonical transformation $\kappa_0$ in
(\ref{eq3.1}), given by the action-angle coordinates near $\Lambda_0$. The spectral analysis required in order to compute the extremal
eigenvalues of $P_{\eps}$ in Theorem \ref{st1} will be carried out in the globally defined $h$--dependent Hilbert space $H(\Lambda)$, associated to the IR-manifold
$\Lambda$ by the FBI--Bargmann approach.

\medskip
\noindent
Recalling Proposition 4.2 and taking into account also Proposition \ref{prop4.2}, eliminating the $x_2$--dependence in the normal form by means of a
pseudodifferential conjugation, we may summarize the discussion so far in the following result.

\begin{prop}
\label{prop6.1}
There exists a globally defined smooth IR-manifold $\Lambda \subset T^*\widetilde{M}$ and a $C^{\infty}$-Lagrangian torus $\widehat{\Lambda}_0 \subset \Lambda$,
which is an ${\cal O}(\eps)$--perturbation of the rational torus $\Lambda_0$ in the $C^{\infty}$--sense, such that when
$\rho\in \Lambda$ is away from an $\eps^{\delta}$-\neigh{} of $\widehat{\Lambda}_0$ in $\Lambda$ and
\begeq
\label{eq6.6}
\abs{{\rm Re}\, P_{\eps}(\rho;h)}\leq \frac{\eps^{2\delta}}{C},
\endeq
for $C>0$ large enough, then we have
\begeq
\label{eq6.7}
{\rm Im}\, P_{\eps}(\rho;h) \geq \eps \inf Q_{\infty}(\Lambda_0) + \frac{\eps^{2\delta +1}}{{\cal O}(1)}.
\endeq
Here $0<\delta< 1/2$ is so small that $\eps^{\delta} \gg {\rm max}(h^{1/2},\eps^{1/2})$. The manifold $\Lambda$ is ${\cal O}(\eps)$-close to $T^*M$ and
agrees with it away from a \neigh{} of $p^{-1}(0)\cap T^*M$. We have
$$
P_{\eps}  = {\cal O}(1): H(\Lambda,m)\rightarrow H(\Lambda).
$$
Furthermore, there exists an elliptic $h$--Fourier integral operator with a complex phase
$$
U = {\cal O}(1): H(\Lambda)\rightarrow L^2_{\theta}({\bf T}^2),
$$
such that microlocally near $\widehat{\Lambda}_0$, we have
$$
U P_{\eps}  = \left(P(x_1,hD_x,\eps;h)+R(x,hD_x,\eps;h)\right)U.
$$
Here $P(x_1,hD_x,\eps;h)+R(x,hD_x,\eps;h)$ is defined microlocally near $\xi=0$ in $T^*{\bf T}^2$, the full symbol of
$P(x_1,hD_x,\eps;h)$ is independent of $x_2$, and
\begeq
\label{eq6.701}
R(x,\xi,\eps;h) = {\cal O}(\eps^{N+1} + (\xi_1-f(\xi_2))^{N} + h^{N+1}),\quad f(0) = 0.
\endeq
Here $N$ is arbitrarily large but fixed. The leading symbol of $P(x_1,hD_x,\eps;h)$ is of the form
$$
p(\xi) + i\eps \widetilde{q}(x_1,\xi)+ {\cal O}(\eps^2),
$$
where
\begeq
\label{eq6.71}
p(\xi) = p(f(\xi_2),\xi_2) + g(\xi) (\xi_1 - f(\xi_2))^2,\quad g(0)>0,\quad f(0) = 0,
\endeq
and ${\bf T}\ni x_1 \mapsto \widetilde{q}(x_1,f(\xi_2),\xi_2)$ has a unique minimum, when $\xi_2 \in {\rm neigh}(0,\real)$, which is also non-degenerate.
The subprincipal symbol of $P(x_1,hD_x,\eps;h)$ is ${\cal O}(\eps)$.
\end{prop}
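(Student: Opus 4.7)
The plan is to package the outputs of Sections~\ref{sec} and~\ref{qnf} into a single globally defined IR-structure and then verify the quantitative non-ellipticity estimate (\ref{eq6.7}), which is the only genuinely new content of the proposition.

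First, I would construct $\Lambda$ by gluing. Away from a fixed small neighborhood of $\Lambda_0$, I take $\Lambda:=\Lambda_{\eps G}$, the IR-manifold (\ref{eq3.32.1}) built from the global weight $G$ of Proposition~4.2. Near $\Lambda_0$, where $G=G_0\circ\kappa_0$ is real analytic, I replace $\exp(i\eps H_G)\circ\kappa_0^{-1}(T^*{\bf T}^2)$ by the ${\cal O}(\eps^2)$-deformation (\ref{eq6.5}), with $\kappa_\eps^{(N)}=\mathrm{id}+{\cal O}(\eps^2)$ the holomorphic canonical transformation of Proposition~\ref{prop4.1}. A smooth interpolation across a thin transition shell yields a globally defined IR-manifold $\Lambda$, ${\cal O}(\eps)$-close to $T^*M$ and equal to it away from $p^{-1}(0)\cap T^*M$. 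Setting $\widehat{\Lambda}_0$ to be the image of $\Lambda_0$ under the composite map gives an ${\cal O}(\eps)$ $C^\infty$-perturbation of $\Lambda_0$. The weighted space $H(\Lambda)$ is defined by the FBI--Bargmann recipe as in~\cite{HiSjVu07}, Section~5, and the boundedness $P_\eps={\cal O}(1)\colon H(\Lambda,m)\to H(\Lambda)$ is standard for such small IR-deformations. For the operator $U$, I compose the FIO $U_G$ from (\ref{eq3.32.2}) with the action-angle FIO $U_0$ and the quantization $U_\eps A_\eps$ of Proposition~\ref{prop4.2}; the identification (\ref{sec.31})--(\ref{sec.33}) then produces the splitting $P(x_1,hD_x,\eps;h)+R(x,hD_x,\eps;h)$, with $P$ having symbol independent of $x_2$, leading symbol of the form (\ref{eq6.71}), and ${\cal O}(\eps)$-subprincipal symbol (by (\ref{st.24}) preserved through successive averagings), while $R$ collects the residues ${\cal O}(\eps^{N+1}+(\xi_1-f(\xi_2))^N+h^{N+1})$.

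Second, I would verify (\ref{eq6.7}). Outside a fixed small neighborhood of $\widehat{\Lambda}_0$ in $\Lambda$, one has $\Lambda=\Lambda_{\eps G}$, and Proposition~4.2 gives $\Im P_\eps\ge\eps\inf Q_\infty(\Lambda_0)+\eps/C_0$, dominating (\ref{eq6.7}). In the complementary regime $\rho$ lies in that neighborhood and $\mathrm{dist}(\rho,\widehat{\Lambda}_0)\ge\eps^\delta$. In the normal-form coordinates transported by $U$, $\widehat{\Lambda}_0\simeq\{\xi=0\}$ and the leading symbol reads $p(\xi)+i\eps\widetilde{q}(x_1,\xi)+{\cal O}(\eps^2+\eps(\xi_1-f(\xi_2))^N)$ with $p(\xi)=\alpha\xi_2+g(\xi)(\xi_1-f(\xi_2))^2+{\cal O}(\xi_2^2)$, $\alpha,g(0)>0$. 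The hypothesis $|\Re P_\eps(\rho)|\le\eps^{2\delta}/C$ together with $|\xi|\ge c\eps^\delta$ forces $|\xi_1-f(\xi_2)|\ge c'\eps^\delta$: if instead $|\xi_1-f(\xi_2)|$ were smaller, the distance bound would give $|\xi_2|\ge c''\eps^\delta$, but then the linear term $\alpha\xi_2$ in $p(\xi)$ would dominate and contradict the real-part bound once $C$ is large and $\eps$ is small. Invoking the quantitative nondegeneracy (\ref{eq3.21.01}) on the energy surface $E=\Re P_\eps(\rho)+{\cal O}(\eps^2)$,
\[
\widetilde{q}(x_1,\xi)\ge\inf_{x_1\in{\bf T}}\widetilde{q}(x_1,\xi)\ge\inf Q_\infty(\Lambda_E)+\frac{|\xi_1-f(\xi_2)|^2}{{\cal O}(1)}\ge\inf Q_\infty(\Lambda_0)+\frac{\eps^{2\delta}}{{\cal O}(1)},
\]
where the last inequality uses the smooth dependence of $\inf Q_\infty(\Lambda_E)$ on $E$ and absorbs the ${\cal O}(|E|)={\cal O}(\eps^{2\delta}/C)$ correction by taking $C$ large. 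Multiplying by $\eps$ and noting that the error terms ${\cal O}(\eps^2)$ and $h\cdot{\cal O}(\eps)$ coming from the ${\cal O}(\eps^2)$-correction and the subprincipal part are both $\ll\eps^{2\delta+1}$ under (\ref{st.25}) and $\delta<1/9$, one recovers (\ref{eq6.7}).

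The main obstacle is the gluing step: producing a truly smooth IR-manifold from the patchwork of $\Lambda_{\eps G}$ and its $\kappa_\eps^{(N)}$-deformation, while preserving the weighted $h$-pseudodifferential calculus on $H(\Lambda)$ uniformly in $\eps$, and ensuring that the several microlocal unitary equivalences ($U_G$, $U_0$, $U_\eps A_\eps$) compose coherently into a single FIO $U\colon H(\Lambda)\to L^2_\theta({\bf T}^2)$. Since the deformation is ${\cal O}(\eps^2)$-small and compactly supported near $\Lambda_0$, the construction parallels Section~5 of~\cite{HiSjVu07} and adapts with minor modifications; the propagation of the ${\cal O}(\eps)$-subprincipal property through the successive conjugations is then a routine consequence of the $\eps$-smallness of the Poisson brackets produced at each averaging step.
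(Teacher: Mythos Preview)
Your proposal is correct and follows essentially the same route as the paper. In the paper, Proposition~\ref{prop6.1} is presented explicitly as a summary of the constructions in Sections~\ref{sec} and~\ref{qnf} together with the IR-manifold discussion preceding it in Section~7; the gluing of $\Lambda_{\eps G}$ with its $\kappa_\eps^{(N)}$-deformation (\ref{eq6.5}), the composition of $U_G$, $U_0$, $U_\eps A_\eps$ into a single FIO $U$, and the identification of the normal-form operator with remainder are exactly as you describe. Your explicit derivation of (\ref{eq6.7}) --- splitting into the far region where Proposition~4.2 gives a fixed gap, and the near region where the real-part constraint (\ref{eq6.6}) together with $|\xi|\ge c\eps^\delta$ forces $|\xi_1-f(\xi_2)|\ge c'\eps^\delta$ so that (\ref{eq3.21.01}) applies --- makes precise what the paper leaves implicit in the phrase ``Recalling Proposition~4.2 and taking into account also Proposition~\ref{prop4.2}\ldots we may summarize the discussion so far.''
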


\bigskip
\noindent
Using Proposition \ref{prop6.1}, we shall now discuss a priori estimates for the equation
\begeq
\label{eq6.8}
(P_{\eps} -z)u = v,
\endeq
when $u\in H(\Lambda, m)$, $v\in H(\Lambda)$, and the spectral parameter $z\in \comp$ is confined to the region
\begeq
\label{eq6.9}
\abs{{\rm Re}\, z} \leq \frac{\eps^{2\delta}}{{\cal O}(1)},\quad {\rm Im}\, z \leq \eps \inf Q_{\infty}(\Lambda_0) + {\cal O}(\sqrt{\eps} h).
\endeq
When doing so, following~\cite{HiSjVu07},~\cite{HiSj12}, we shall make use of a suitable partition of unity on the manifold $\Lambda$, defined using
Proposition \ref{prop6.1} and consisting of smooth functions satisfying slightly degenerate symbolic estimates. Indeed, the presence of such slightly exotic
symbols is natural here, as we are dealing with methods based on the techniques of normal forms, introducing error terms vanishing to a high order
along the invariant tori. See also~\cite{Sj92}. When quantizing the corresponding symbols defined on $\Lambda$, in the case when $M = \real^2$,
we follow~\cite{HiSj12} and reduce the quantization procedure to that of Weyl on the standard phase space $T^*\real^2$, by means of a
$C^{\infty}$--canonical transformation
$$
\kappa: {\rm neigh}(p^{-1}(0),T^*\real^2) \rightarrow {\rm neigh}(p^{-1}(0), \Lambda),
$$
such that
$$
\kappa(X) = X + i\eps H_G(x) + {\cal O}(\eps^2),
$$
and the corresponding unitary Fourier integral operator with a complex phase mapping $L^2(\real^2)$ to $H(\Lambda)$. In the case when $M$ is compact,
we use the Toeplitz quantization, following~\cite{Sj96}.

\bigskip
\noindent
Let us consider a smooth partition of unity on the manifold $\Lambda$,
\begeq
\label{eq6.10}
1 = \chi + \psi_1 + \psi_2.
\endeq
Here $\chi \in C^{\infty}_0(\Lambda)$, $\nabla^m \chi = {\cal O}(\eps^{-2\delta m})$, $m\geq 0$, is a cutoff function supported in an $\eps^{\delta}$--neighborhood
of $\widehat{\Lambda}_0$ intersected with the region where $\abs{{\rm Re}\, P_{\eps}} \leq \eps^{2\delta}/C$. Specifically, we shall obtain $\chi$ by choosing a
suitable function $\chi_0 \in C^{\infty}_0(T^*{\bf T}^2)$, $\partial^{\alpha} \chi_0 = {\cal O}(\eps^{-2\delta \abs{\alpha}})$, $\abs{\alpha}\geq 0$, depending on
$\xi$ only, $\chi_0 = \chi_0(\xi)$, and conjugating the operator $\chi_0(hD_x)$ by the microlocal inverse of the operator $U$ in Proposition 7.1. In particular,
we get, using that the subprincipal symbol of $P_{\eps = 0}$ vanishes,
\begeq
\label{eq6.10.1}
[P_{\eps},\chi] = {\cal O}\left(\frac{h^3}{\eps^{6\delta}}\right) + {\cal O}\left(\frac{\eps h}{\eps^{2\delta}}\right): H(\Lambda) \rightarrow H(\Lambda).
\endeq

\medskip
\noindent
The function $0\leq \psi_1 \in C^{\infty}(\Lambda)$ in (\ref{eq6.10}) satisfies
$$
\nabla^m \psi_1 = {\cal O}_m(\eps^{-2\delta m}),\quad m\geq 0,
$$
and is such that
\begeq
\label{eq6.11}
\abs{{\rm Re}\, P_{\eps}(\rho;h)} \geq \frac{\eps^{2\delta}}{C}
\endeq
near the support of $\psi_1$. Finally, $0 \leq \psi_2 \in C^{\infty}_0(\Lambda)$ in (\ref{eq6.10}) is such that
\begeq
\label{eq6.11.1}
\nabla^m \psi_2 = {\cal O}_m(\eps^{-2\delta m}),\quad m\geq 0,
\endeq
and furthermore, $\psi_2$ is supported in a region invariant under the $H_p$--flow, where
\begeq
\label{eq6.11.2}
{\rm Im}\, P_{\eps}(\rho;h) \geq \eps \inf Q_{\infty}(\Lambda_0) + \frac{\eps^{1+2\delta}}{{\cal O}(1)}.
\endeq
We also arrange, as we may, so that $\psi_2$ Poisson commutes with $p$, the leading symbol of $P_{\eps=0}$ acting on $H(\Lambda)$.

\bigskip
\noindent
Let us now return to the equation (\ref{eq6.8}). Assume that $\delta \in (0,1/2)$ is so small that
\begeq
\label{eq6.12}
\eps^{2\delta} \geq h^{\frac{1}{2}-\eta},
\endeq
for some fixed $\eta > 0$. We can then follow the slightly degenerate parametrix construction for $P_{\eps} - z$, near the support of $\psi_1$,
described in detail in Section 4 of~\cite{HiSj12} and obtain that
\begeq
\label{eq6.13}
\norm{\psi_1 u} \leq \frac{{\cal O}(1)}{\eps^{2\delta}}\norm{v} + {\cal O}(h^{\infty})\norm{u}.
\endeq
Here and in what follows the norms are taken in the space $H(\Lambda)$.

\medskip
\noindent
When discussing estimates for $\psi_2 u$, let us notice that ${\rm Im}\, P_{\eps} (\rho;h) = {\cal O}(\eps)$ on $\Lambda$, and near ${\rm supp}\, \psi_2$,
we have, in view of (\ref{eq6.11.2}) and (\ref{eq6.9}),
\begeq
\label{eq6.16}
{\rm Im}\, \left(P_{\eps}(\rho;h) - z\right) \geq \frac{\eps^{1+2\delta}}{{\cal O}(1)} - {\cal O}(\sqrt{\eps}h).
\endeq
Therefore, with a new implicit constant, we get near the support of $\psi_2$,
\begeq
\label{eq6.17}
\frac{1}{\eps}{\rm Im}\, \left(P_{\eps}(\rho;h) - z\right) \geq \frac{\eps^{2\delta}}{{\cal O}(1)},
\endeq
provided that the following lower bound on $\eps$ holds,
\begeq
\label{eq6.18}
h^{2/(1+4\delta)} \ll \eps.
\endeq
The lower bound (\ref{eq6.18}) is of the same form as (\ref{eq6.0}). Using $h/\eps^{4\delta}$ as the natural semiclassical parameter and applying the sharp G\aa{}rding inequality,
we get in view of (\ref{eq6.17}),
\begin{multline}
\label{eq6.18.1}
\frac{1}{\eps}{\rm Im}\, ((P_{\eps}-z)\psi_2 u,\psi_2 u) \geq \left(\frac{\eps^{2\delta}}{{\cal O}(1)} - {\cal O}(1) \frac{h}{\eps^{4\delta}}\right)
\norm{\psi_2 u}^2 - {\cal O}(h^{\infty})\norm{u}^2 \\
\geq \frac{\eps^{2\delta}}{{\cal O}(1)}\norm{\psi_2 u}^2 - {\cal O}(h^{\infty})\norm{u}^2,
\end{multline}
provided that we strengthen (\ref{eq6.12}) by assuming that
\begeq
\label{eq6.18.11}
\frac{h}{\eps^{6\delta}} \leq h^{\eta}, \quad \eta > 0.
\endeq
It follows from (\ref{eq6.18.1}) that
\begeq
\label{eq6.18.12}
\frac{\eps^{2\delta +1}}{{\cal O}(1)}\norm{\psi_2 u}^2 \leq {\cal O}(1)\norm{v}\,\norm{\psi_2 u} + {\rm Im}\, ([P_{\eps},\psi_2]\widetilde{\psi}_2 u,\psi_2 u)
+ {\cal O}(h^{\infty})\norm{u}^2.
\endeq
Here $\widetilde{\psi}_2\in C^{\infty}_0(\Lambda)$ has the same properties as $\psi_2$ and is such that $\widetilde{\psi}_2 = 1$ near ${\rm supp}\,(\psi_2)$.
When estimating the commutator $[P_{\eps},\psi_2]$ in (\ref{eq6.18.12}), we get by the Weyl calculus, using
(\ref{eq6.11.1}) together with the fact that the subprincipal symbol of $P_{\eps=0}$ vanishes and $p$ and $\psi_2$ Poisson commute,
\begeq
\label{eq6.18.4}
[P_{\eps},\psi_2] = [P_{\eps=0},\psi_2] + {\cal O}\left(\frac{\eps h}{\eps^{2\delta}}\right) =  {\cal O}\left(\frac{h^3}{\eps^{6\delta}}\right) +
{\cal O}\left(\frac{\eps h}{\eps^{2\delta}}\right) = {\cal O}\left(\frac{\eps h}{\eps^{2\delta}}\right).
\endeq
Here we have also used that $h^2 \ll \eps^{1+4\delta}$, in view of (\ref{eq6.18}). Combining (\ref{eq6.18.12}) and (\ref{eq6.18.4}), we get
\begeq
\frac{\eps^{2\delta +1}}{{\cal O}(1)}\norm{\psi_2 u}^2 \leq {\cal O}(1)\norm{v}\,\norm{\psi_2 u} +
{\cal O}\left(\frac{\eps h}{\eps^{2\delta}}\right)\norm{\widetilde{\psi}_2 u}^2 + {\cal O}(h^{\infty})\norm{u}^2,
\endeq
and therefore,
\begeq
\label{eq6.18.5}
\norm{\psi_2 u}^2 \leq \frac{{\cal O}(1)}{\eps^{4\delta +2}}\norm{v}^2 + {\cal O}\left(\frac{h}{\eps^{4\delta}}\right)
\norm{\widetilde{\psi}_2 u}^2 + {\cal O}(h^{\infty})\norm{u}^2.
\endeq
Combining (\ref{eq6.18.11}), (\ref{eq6.18.5}), and a standard iteration argument, we conclude that
\begeq
\label{eq6.19}
\norm{\psi_2 u} \leq \frac{{\cal O}(1)}{\eps^{1+2\delta}}\norm{v} + {\cal O}(h^{\infty})\norm{u}.
\endeq

\bigskip
\noindent
Using (\ref{eq6.10}), (\ref{eq6.13}), and (\ref{eq6.19}), we obtain the following a priori estimate for the problem (\ref{eq6.8}), (\ref{eq6.9}),
\begeq
\label{eq6.20}
\norm{(1-\chi)u} \leq \frac{{\cal O}(1)}{\eps^{1+2\delta}}\norm{v} + {\cal O}(h^{\infty})\norm{u},
\endeq
which holds provided that $\delta \in (0,1/2)$ and the conditions (\ref{eq6.18}), (\ref{eq6.18.11}) are fulfilled. In the subsequent analysis, we may therefore
concentrate the attention on the region ${\rm supp}\,(\chi)$, for the cutoff function $\chi$ in (\ref{eq6.10}).

\medskip
\noindent
Let us recall that the function $\chi\in C^{\infty}_0(\Lambda)$, $\nabla^m \chi = {\cal O}(\eps^{-2\delta m})$, $m\geq 0$, in (\ref{eq6.10}) is supported in an
$\eps^{\delta}$--neighborhood of $\widehat{\Lambda}_0$ intersected with the region where $\abs{{\rm Re}\, P_{\eps}} \leq \eps^{2\delta}/C$. Writing
$$
(P_{\eps} - z)\chi u = \chi v + [P_{\eps},\chi] u,
$$
and applying the Fourier integral operator $U$ of Proposition 7.1, we get
\begeq
\label{eq6.20.01}
\left(P(x_1,hD_{x},\eps;h) -z\right) U\chi u = U\chi v + U [P_{\eps},\chi] u + Tu.
\endeq
Here, using (\ref{eq6.701}), we see that
\begeq
\label{eq6.20.1}
T = {\cal O}(h^M): H(\Lambda) \rightarrow L^2_{\theta}({\bf T}^2),
\endeq
where $M$ can be taken as large as we wish, provided that the integer $N$ in Proposition 7.1 is taken large enough. Furthermore, as discussed above, we may
arrange so that
$$
U \chi = \chi_0 U + {\cal O}(h^{\infty}): H(\Lambda) \rightarrow H(\Lambda),
$$
where $\chi_0 = \chi_0(hD_x,\eps)$ is of the form
$$
\chi_0(\xi,\eps) = \chi_1\left(\frac{\xi_1}{\eps^{\delta}}\right)\chi_1\left(\frac{\xi_2}{\eps^{2\delta}}\right),
$$
where $\chi_1 \in C^{\infty}_0(\real)$ is a standard cutoff to a neighborhood of $0$. In particular, using (\ref{eq6.71}) we see that the support of $\chi_0$ is
contained in the region where
$$
\abs{\xi} = {\cal O}(\eps^{\delta}),\quad \abs{p(\xi)} \leq {\cal O}(\eps^{2\delta}).
$$

\medskip
\noindent
Modifying the operator $T$ in (\ref{eq6.20.01}) slightly, we get
\begeq
\label{eq6.20.2}
\left(P(x_1,hD_x,\eps;h)-z\right)\chi_1\left(\frac{hD_{x_1}}{\eps^{\delta}}\right)\chi_2\left(\frac{hD_{x_2}}{\eps^{2\delta}}\right) U u =
U\chi v + U [P_{\eps},\chi] u + Tu
\endeq
In the subsequent analysis we shall therefore be working on the cotangent space $T^*{\bf T}^2$, in the region where
\begeq
\label{eq6.211}
\xi_1 = {\cal O}(\eps^{\delta}),
\endeq
while
\begeq
\label{eq6.22}
\xi_2 = {\cal O}(\eps^{2\delta}).
\endeq

\medskip
\noindent
Taking a Fourier series expansion in $x_2$, we get a direct sum decomposition
\begeq
\label{eq6.23}
P(x_1,hD_x,\eps;h) = \bigoplus_{j\in {\bf Z}} P(x_1,hD_{x_1},\xi_2,\eps;h),\quad \xi_2 = h(j-\theta_2),
\endeq
where, according to (\ref{eq6.22}), the summation is restricted only to those $j\in {\bf Z}$  for which $\xi_2 = {\cal O}(\eps^{2\delta})$. We shall consider
the question of inverting the operator
\begeq
\label{eq6.24}
P(x_1,hD_x,\eps;h) - z = \bigoplus_{j} \left(P(x_1,hD_{x_1},\xi_2,\eps;h) - z\right),
\endeq
where, compared to (\ref{eq6.9}), the real part of $z$ will be localized further to the region
\begeq
\label{eq6.25}
\abs{{\rm Re}\, z} \leq \frac{h}{C\sqrt{\eps}},
\endeq
where $C>0$ is large enough but fixed. Since in Proposition 7.1 we have introduced errors that are ${\cal O}(h^{M})$, $M\gg 1$, see (\ref{eq6.20.1}), we would
first like to show that the one-dimensional non-selfadjoint operator
\begeq
\label{eq6.25.1}
P(x_1,hD_{x_1},\xi_2,\eps;h) -z: L^2_{\theta_1}({\bf T}) \rightarrow L^2_{\theta_1}({\bf T})
\endeq
is invertible, microlocally in the region where $\xi_1 = {\cal O}(\eps^{\delta})$, with an inverse of temperate growth in $1/h$, when
$\xi_2 = {\cal O}(\eps^{2\delta})$ is such that $\abs{\xi_2} \geq h/C_1\sqrt{\eps}$, for a suitable fixed $C_1$, satisfying $0 < C_1 < C$. In doing so,
it will be convenient to distinguish two cases, depending on the sign of $\xi_2$.

\medskip
\noindent
{\bf Case 1}. Let us assume first that $\xi_2 = {\cal O}(\eps^{2\delta})$ is such that
\begeq
\label{eq6.26}
\xi_2 \geq \frac{h}{C_1 \sqrt{\eps}}.
\endeq
Then, after a unitary conjugation, we can write, on the level of operators,
\begin{multline}
\label{eq6.27}
e^{-if(\xi_2)x_1/h} P(x_1,hD_{x_1},\xi_2,\eps;h)e^{if(\xi_2)x_1/h} - z \\
= p(f(\xi_2),\xi_2) +  g(f(\xi_2) + hD_{x_1},\xi_2) (hD_{x_1})^2 + i\eps \widetilde{q}(x_1,f(\xi_2) + hD_{x_1},\xi_2) +
{\cal O}(\eps^2) \\
+ h{\cal O}(\eps) + {\cal O}(h^2) -z.
\end{multline}
Here the conjugated operator, acting on the space of Floquet periodic functions $L^2_{\theta_1 + f(\xi_2)}({\bf T})$, is still considered microlocally in the
region where $\xi_1 = {\cal O}(\eps^{\delta})$, since $f(\xi_2) = {\cal O}(\xi_2) = {\cal O}(\eps^{2\delta})$. Recalling that the derivative of the function
$\xi_2 \mapsto p(f(\xi_2),\xi_2)$ is strictly positive near $\xi_2 = 0$, we conclude, using (\ref{eq6.25}), (\ref{eq6.26}), and the positivity of
$g(f(\xi_2) + hD_{x_1},\xi_2)(hD_{x_1})^2$, that the real part of the operator in (\ref{eq6.27}), which is of the form
$$
p(f(\xi_2),\xi_2) +  g(f(\xi_2) + hD_{x_1},\xi_2) (hD_{x_1})^2 -{\rm Re}\,z + {\cal O}(\eps^2 + h^2) + h{\cal O}(\eps),
$$
is $\geq h/(\widetilde{C}\sqrt{\eps})$, for some $\widetilde{C}>0$, and is therefore invertible, microlocally in the region $\xi_1 = {\cal O}(\eps^{\delta})$,
with the norm of the inverse being ${\cal O}(\sqrt{\eps}/h)$. Here we also use that $\eps^2 \ll h/\sqrt{\eps}$, in view of (\ref{eq6.0}).
It is therefore clear that the full operator in (\ref{eq6.27}) is invertible, microlocally in the region
$\xi_1 = {\cal O}(\eps^{\delta})$, with a microlocal inverse of the norm ${\cal O}(\sqrt{\eps}/h)$.

\bigskip
\noindent
{\bf Case 2}. We assume now that $\xi_2 = {\cal O}(\eps^{2\delta})$ is such that
\begeq
\label{eq6.28}
\xi_2 \leq -\frac{h}{C_1\sqrt{\eps}}.
\endeq
Similarly to (\ref{eq6.27}), we write
\begin{multline}
\label{eq6.29}
e^{-if(\xi_2)x_1/h} P(x_1,hD_{x_1},\xi_2,\eps;h)e^{if(\xi_2)x_1/h} - z \\
= g(f(\xi_2) + hD_{x_1},\xi_2) (hD_{x_1})^2 + i\eps \widetilde{q}(x_1,f(\xi_2) + hD_{x_1},\xi_2) +
{\cal O}(\eps^2) \\
+ h{\cal O}(\eps) + {\cal O}(h^2) - w,
\end{multline}
where
\begeq
\label{eq6.29.1}
w = z - p(f(\xi_2),\xi_2)
\endeq
satisfies
\begeq
\label{eq6.30}
{\rm Re}\, w \geq \frac{h}{C_2 \sqrt{\eps}}, \quad {\rm Im}\, w \leq \eps \inf Q_{\infty}(\Lambda_0) + {\cal O}(\sqrt{\eps}h),
\endeq
for a suitable $C_2 > 0$. In view of (\ref{eq6.22}), we have
\begeq
\label{eq6.30.001}
\widetilde{q}(x_1,f(\xi_2) + \xi_1,\xi_2) = \widetilde{q}(x_1,\xi_1,0) + {\cal O}(\eps^{2\delta}),
\endeq
and therefore, on the operator level we obtain that
\begin{multline}
\label{eq6.30.1}
e^{-if(\xi_2)x_1/h} P(x_1,hD_{x_1},\xi_2,\eps;h)e^{if(\xi_2)x_1/h} - z \\
= g(f(\xi_2)+hD_{x_1},\xi_2)(hD_{x_1})^2 + i\eps \widetilde{q}(x_1,hD_{x_1},0) + R -w,
\end{multline}
where
\begeq
\label{eq6.30.11}
R = {\cal O}(\eps^{1+2\delta} + \eps h + \eps^2 + h^2): L^2_{\theta_1 + f(\xi_2)}({\bf T}) \rightarrow L^2_{\theta_1 + f(\xi_2)}({\bf T}).
\endeq
We may also assume that in (\ref{eq6.30.1}), the operator $\widetilde{q}(x_1, hD_{x_1},0)$ is given by the classical $h$--quantization. It follows from
(\ref{eq6.20.2}) that thanks to the presence of the cutoff $\chi_1(hD_x/\eps^{\delta})$, to invert the operator in (\ref{eq6.25.1}), microlocally in the
region where $\xi_1 = {\cal O}(\eps^{\delta})$, we should consider the following equation
\begin{multline}
\label{eq6.30.12}
\left(g(f(\xi_2) + hD_{x_1},\xi_2)(hD_{x_1})^2 + i\eps \widehat{q}(x_1,hD_{x_1}) + R -w_1\right) \chi_1\left(\frac{hD_{x_1}}{\eps^{\delta}}\right) u = v,
\end{multline}
for $u$, $v\in L^2_{\theta_1 + f(\xi_2)}({\bf T})$. Here $w_1 = w -i\eps \inf Q_{\infty}(\Lambda_0)$ and
$$
\widehat{q}(x_1,\xi_1) = \widehat{q}(x_1,0) + k(x_1,\xi_1) \varphi\left(\frac{\xi_1}{\eps^{\delta}}\right),
$$
where
$$
\widehat{q}(x_1,0) = \langle{q\rangle}_2(x_1,0) - \inf Q_{\infty}(\Lambda_0)\geq 0,
$$
$$
k(x_1,\xi_1) = \xi_1 \int_0^1 \left(\partial_{\xi_1}\widetilde{q}\right)(x_1,t\xi_1)\, dt
$$
and $\varphi \in C^{\infty}_0(\real)$ is such that $\varphi = 1$ near ${\rm supp}\,(\chi_1)$. In particular, the function $\widehat{q}(x_1,\xi_1)$ satisfies
the assumptions for the function $\widetilde{V}$ in Proposition A.4 in Appendix.

\medskip
\noindent
Let us set
$$
A(x_1,hD_{x_1}) = g(f(\xi_2) + hD_{x_1},\xi_2)(hD_{x_1})^2 + i\eps \widehat{q}(x_1,hD_{x_1}),
$$
We would like to invert the operator $A(x_1,hD_{x_1}) + R -w_1$, occurring in the left hand side of (\ref{eq6.30.12}) by an application of Proposition A.4 and to
that end, we shall assume that
\begeq
\label{eq6.31}
\eps^{1 + \frac{\delta}{2}} \ll h.
\endeq
Write
\begeq
\label{eq6.31.1}
A(x_1,hD_{x_1}) - w_1 = \eps \left(\frac{A(x_1,hD_{x_1})}{\eps} - w_2\right), \quad w_2 = \frac{w_1}{\eps} = \frac{w}{\eps} - i \inf Q_{\infty}(\Lambda_0).
\endeq
It follows from (\ref{eq6.30}) that the spectral parameter $w_2$ satisfies
\begeq
\label{eq6.32}
{\rm Re}\, w_2 \geq \frac{h}{C_2 \eps^{3/2}}, \quad {\rm Im}\, w_2 \leq {\cal O}(\widetilde{h}).
\endeq
Here we recall that $\widetilde{h} = h/\sqrt{\eps}$. In order to be able to apply Proposition A.4 to (\ref{eq6.31.1}) we finally have to impose the smallness
condition
\begeq
\label{eq6.32.1}
\widetilde{h}\abs{w_2}^{1/2} \ll 1,
\endeq
and using (\ref{eq6.22}), (\ref{eq6.25}), (\ref{eq6.29.1}), and (\ref{eq6.31.1}), we see that (\ref{eq6.32.1}) holds provided that
$$
\frac{h}{\eps} \eps^{\delta} \ll 1.
$$
We shall therefore require that the condition
\begeq
\label{eq6.32.2}
h \ll \eps^{1-\delta}
\endeq
holds. Once the conditions (\ref{eq6.31}) and (\ref{eq6.32.2}) both hold, we are in the position to apply Proposition A.4 to (\ref{eq6.31.1}), obtaining that
\begin{multline}
\label{eq6.32.3}
\left(A(x_1,{h}D_{x_1}) - w_1\right)^{-1} \\
= \eps^{-1} {\cal O}(\widetilde{h}^{-2/3}\abs{w_2}^{-1/3}): L^2_{\theta_1 + f(\xi_2)}({\bf T}) \rightarrow
L^2_{\theta_1 + f(\xi_2)}({\bf T}).
\end{multline}
We get, using that $\abs{w_2}\geq h/(C_2\eps^{3/2})$,
\begin{multline}
\label{eq6.33}
\left(A(x_1,{h}D_{x_1}) - w_1\right)^{-1} \\ = \eps^{-1} {\cal O}(\widetilde{h}^{-1}\eps^{1/3}) = {\cal O}(h^{-1}\eps^{-1/6}):
L^2_{\theta_1 + f(\xi_2)}({\bf T}) \rightarrow L^2_{\theta_1 + f(\xi_2)}({\bf T}).
\end{multline}

\medskip
\noindent
Returning to the equation (\ref{eq6.30.12}), we would like to use a standard Neumann series argument
to invert the operator $A(x_1,hD_{x_1}) + R -w_1$ in the left hand side of (\ref{eq6.30.12}), and according to (\ref{eq6.33}) and
(\ref{eq6.30.11}), we know that this is possible provided that
\begeq
\label{eq6.34}
{h}^{-1}\eps^{-1/6} \left(\eps^{2\delta+1}  + \eps h + \eps^2 + h^2\right) \ll 1,
\endeq
which, in view of (\ref{eq6.0}) and (\ref{eq6.18}), is equivalent to the condition
\begeq
\label{eq6.35}
{h}^{-1} \eps^{\frac{5}{6} + 2 \delta} \ll 1.
\endeq
Comparing the upper bounds (\ref{eq6.35}) and (\ref{eq6.31}), we see that the latter is implied by the former, provided that
\begeq
\label{eq6.35.1}
0 < \delta < 1/9.
\endeq
In what follows, we shall adopt the smallness condition (\ref{eq6.35.1}). We arrive therefore at the following upper bound on $\eps$,
\begeq
\label{eq6.36}
\eps \ll h^{6/(5+12\delta)},
\endeq
which is a strengthening of the upper bound in (\ref{eq6.0}).

\bigskip
\noindent
We shall now also examine the lower bounds on $\eps$ that we have imposed in the course of our argument in this section. To that end, we recall that the
lower bounds have been introduced in (\ref{eq6.18}), (\ref{eq6.18.11}), and (\ref{eq6.32.2}). Comparing first the lower bounds (\ref{eq6.18}) and (\ref{eq6.32.2}),
we see that we have
$$
h^{2/(1 + 4\delta)} \ll h^{1/(1-\delta)}
$$
when (\ref{eq6.35.1}) holds, and our lower bound on $\eps$ becomes
\begeq
\label{eq6.37}
h^{1/(1-\delta)} \ll \eps.
\endeq
We should then check the validity of (\ref{eq6.18.11}) when (\ref{eq6.37}) holds, and to that end we observe that indeed,
$$
\frac{h}{\eps^{6\delta}} \leq \frac{h}{h^{6\delta/1-\delta}}\leq h^{\eta},\quad \eta > 0,
$$
thanks to (\ref{eq6.35.1}).

\bigskip
\noindent
Combining the bounds (\ref{eq6.36}) and (\ref{eq6.37}), we get the permissible range
\begeq
\label{eq6.37.1}
h^{1/(1 - \delta)} \ll \eps \ll h^{6/(5+12\delta)},
\endeq
where $\delta \in (0,1/9)$.
The range in (\ref{eq6.37.1}) is non-empty for $\delta \in (0,1/9)$ precisely when
$$
\frac{1}{1-\delta} > \frac{6}{5 + 12\delta} \Longleftrightarrow \delta > \frac{1}{18}.
$$

\bigskip
\noindent
Let us summarize the discussion above in the following result.
\begin{prop}
\label{prop6.2}
Let us consider the operator
$$
P(x_1,hD_{x},\eps;h) = \bigoplus_{j\in {\bf Z}} P(x_1,hD_{x_1},\xi_2,\eps;h),\quad \xi_2 = h(j-\theta_2) = {\cal O}(\eps^{2\delta}),
$$
microlocally in the region $\xi_1 = {\cal O}(\eps^{\delta})$, $\xi_2 = {\cal O}(\eps^{2\delta})$, where $1/18 < \delta < 1/9$. Assume that the spectral parameter
$z\in \comp$ is such that
\begeq
\label{eq6.37.2}
\abs{{\rm Re}\, z} \leq \frac{h}{C\sqrt{\eps}}, \quad {\rm Im}\, z \leq \eps \inf\, Q_{\infty}(\Lambda_0) + {\cal O}(\sqrt{\eps}h),
\endeq
for some constant $C>0$. Assume furthermore that
\begeq
\label{eq6.37.3}
h^{1/(1-\delta)} \ll \eps \ll h^{6/(5+12\delta)},
\endeq
and let us assume that the quantum numbers $\xi_2 = {\cal O}(\eps^{2\delta})$ satisfy
$$
\abs{\xi_2}\geq \frac{h}{{\cal O}(1)\sqrt{\eps}}.
$$
Then there exists a family of operators
$$
E(\xi_2,\eps;h) = {\cal O}(\eps^{-1/6}h^{-1}): L^2_{\theta_1} \rightarrow L^2_{\theta_1}
$$
such that
$$
\left(E(\xi_2,\eps;h)(P(x_1,hD_{x_1},\xi_2,\eps;h) -z) - 1\right) \chi\left(\frac{hD_{x_1}}{\eps^{\delta}}\right) =
{\cal O}(h^{\infty}): L^2_{\theta_1} \rightarrow L^2_{\theta_1}
$$
for every $\chi \in C^{\infty}_0(\real)$ with support in a sufficiently small but fixed neighborhood of $0$.
\end{prop}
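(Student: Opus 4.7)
The plan is to construct the parametrix $E(\xi_2,\eps;h)$ by splitting according to the sign of $\xi_2$ and gluing the two microlocal inverses via the cutoff $\chi(hD_{x_1}/\eps^\delta)$. In both cases I would first conjugate by $e^{if(\xi_2)x_1/h}$ in order to move to the space $L^2_{\theta_1+f(\xi_2)}({\bf T})$ where the momentum is centered at $0$; this rewrites $P(x_1,hD_{x_1},\xi_2,\eps;h)-z$ in the form
\[
p(f(\xi_2),\xi_2) + g(f(\xi_2)+hD_{x_1},\xi_2)(hD_{x_1})^2 + i\eps\, \widetilde q(x_1,f(\xi_2)+hD_{x_1},\xi_2) + R_0 - z,
\]
with $R_0 = \mathcal{O}(\eps^2 + h\eps + h^2)$, exactly as in (7.27).

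For Case 1, $\xi_2 \geq h/(C_1\sqrt{\eps})$, I would use that $\partial_{\xi_2}p(f(\xi_2),\xi_2)>0$ near $0$, so $p(f(\xi_2),\xi_2) \gtrsim \xi_2 \geq h/(C_1\sqrt{\eps})$. Combined with the nonnegativity of $g(\cdot)(hD_{x_1})^2$ (using $g(0)>0$) and the bound $|\Re z|\le h/(C\sqrt\eps)$ with $C$ large, the real part of the principal symbol minus $\Re z$ is bounded below by $h/(\widetilde C\sqrt\eps)$. The remainder $R_0$ together with the imaginary part $i\eps\widetilde q$ is absorbed since $\eps^2, h^2, h\eps = o(h/\sqrt\eps)$ in the range (7.37.3). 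A straightforward microlocal construction in the region $\xi_1=\mathcal{O}(\eps^\delta)$ then produces an inverse of norm $\mathcal{O}(\sqrt\eps/h) = \mathcal{O}(h^{-1}\eps^{-1/6}\cdot \eps^{2/3})$, which is majorized by the bound claimed in the proposition.

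For Case 2, $\xi_2 \leq -h/(C_1\sqrt{\eps})$, the key step is the rescaling of (7.31.1): setting $w = z - p(f(\xi_2),\xi_2)$ and $w_2 = w/\eps - i\inf Q_\infty(\Lambda_0)$, I pass to the semiclassical parameter $\widetilde h = h/\sqrt\eps$ and write the operator as $\eps(A/\eps - w_2) + R$, where $R = \mathcal{O}(\eps^{1+2\delta}+\eps h+\eps^2+h^2)$ collects the error term (7.30.11). Here $\Re w_2 \geq h/(C_2\eps^{3/2}) = \widetilde h/(C_2\eps)$, and $\widetilde q(x_1,f(\xi_2)+\xi_1,\xi_2) = \widetilde q(x_1,\xi_1,0) + \mathcal{O}(\eps^{2\delta})$ under (7.22), so that after localizing with $\varphi(\xi_1/\eps^\delta)$ (with $\varphi = 1$ on $\mathrm{supp}\,\chi_1$) the imaginary part $\widehat q(x_1,\xi_1)$ satisfies the hypotheses of Proposition A.4 in the Appendix. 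Applying that Proposition provides the resolvent bound $(A/\eps - w_2)^{-1} = \mathcal{O}(\widetilde h^{-2/3}|w_2|^{-1/3})$, provided the smallness condition $\widetilde h |w_2|^{1/2}\ll 1$ (equivalent to $h\ll \eps^{1-\delta}$) holds. Substituting $|w_2|\gtrsim h/\eps^{3/2}$ then yields $(A-w_1)^{-1} = \mathcal{O}(h^{-1}\eps^{-1/6})$.

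The main obstacle is verifying that the Neumann series absorption of the remainder $R$ in Case 2 succeeds simultaneously with the application of Proposition A.4. The Neumann condition $h^{-1}\eps^{-1/6}\cdot(\eps^{1+2\delta}+\eps h+\eps^2+h^2)\ll 1$ reduces, using $\eps^{1+2\delta}$ as the dominant term in the relevant range, to $\eps \ll h^{6/(5+12\delta)}$, while the condition $h\ll \eps^{1-\delta}$ from the applicability of Proposition A.4 gives $\eps \gg h^{1/(1-\delta)}$. These are compatible only when $\delta > 1/18$, and the constraint $\delta < 1/9$ comes from requiring $\eps^{1+\delta/2}\ll h$ (so that the Neumann bound dominates the smallness condition for Proposition A.4). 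Once the parameter range $1/18 < \delta < 1/9$ and $h^{1/(1-\delta)} \ll \eps \ll h^{6/(5+12\delta)}$ is in place, the two microlocal inverses from Cases 1 and 2 combine, after composition with $\chi(hD_{x_1}/\eps^\delta)$, into the family $E(\xi_2,\eps;h)$ with the stated norm bound and with error $\mathcal{O}(h^\infty)$ from the microlocal cutoff.
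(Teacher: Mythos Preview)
Your proposal is correct and follows essentially the same route as the paper: the same conjugation by $e^{if(\xi_2)x_1/h}$, the same sign-of-$\xi_2$ case split, the same invocation of Proposition~A.4 after extracting the error $R=\mathcal{O}(\eps^{1+2\delta}+\eps h+\eps^2+h^2)$, and the same balancing of the Neumann condition $\eps\ll h^{6/(5+12\delta)}$ against the applicability conditions $h\ll\eps^{1-\delta}$ and $\eps^{1+\delta/2}\ll h$ to arrive at the window $1/18<\delta<1/9$. The only cosmetic difference is that the paper phrases the $\delta<1/9$ constraint as ``(7.35) implies (7.31)'' whereas you phrase it as the Neumann bound dominating the Proposition~A.4 smallness hypothesis, which is the same statement.
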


\medskip
\noindent
{\it Remark}. Notice that to reach powers of $h$ that are $<1$ in (\ref{eq6.37.3}), it suffices to take $\delta > 1/12$. To obtain the range in (\ref{eq6.37.3})
that is as large as possible, we should choose $\delta \in (1/18,1/9)$ to be close to $1/9$.

\bigskip
\noindent
In what follows, we continue to assume that the spectral parameter $z\in \comp$ is confined to the region (\ref{eq6.37.2}), and we shall assume that
(\ref{eq6.37.3}) holds, for some $\delta \in (1/18, 1/9)$. It follows therefore from Proposition \ref{prop6.2} that in the orthogonal sum decomposition (\ref{eq6.24}), we can restrict the
attention to the quantum numbers $\xi_2 = h(j-\theta_2)$ such that
\begeq
\label{eq6.39}
\abs{\xi_2} \leq \frac{h}{C_1 \sqrt{\eps}} = \frac{\widetilde{h}}{C_1},\quad C_1 > 0.
\endeq
Using this refined localization in the parameter $\xi_2$, we shall now proceed to show that the spectrum of the operator $P_{\eps}$ in the region
(\ref{eq6.37.2}) is contained in the union of the pairwise disjoint bands of the form
\begeq
\label{eq6.40}
\abs{p(f(\xi_2),\xi_2) - {\rm Re}\, z} \leq C_0 \sqrt{\eps} h, \quad \xi_2 = h(j-\theta_2) = {\cal O}(\widetilde{h}),
\endeq
where $C_0 > 1$ is large enough but fixed. When doing so, we shall proceed similarly to the arguments above, relying upon Proposition A.4 and treating the
parameter $\xi_2$ in a perturbative way.

\medskip
\noindent
Let us assume that $z\in \comp$ satisfies (\ref{eq6.37.2}) and is such that for some sufficiently large fixed $C_0 > 1$, we have
\begeq
\label{eq6.41}
\abs{p(f(\xi_2),\xi_2) - {\rm Re}\, z} \geq C_0 \sqrt{\eps} h,
\endeq
for all $\xi_2 = h(j-\theta_2) = {\cal O}(\widetilde{h})$. Similarly to (\ref{eq6.29}), we write
\begin{multline}
\label{eq6.42}
e^{-if(\xi_2)x_1/h} P(x_1,hD_{x_1},\xi_2,\eps;h)e^{if(\xi_2)x_1/h} - z \\
= g(f(\xi_2) + hD_{x_1},\xi_2) (hD_{x_1})^2 + i\eps \widetilde{q}(x_1,f(\xi_2) + hD_{x_1},\xi_2) +
{\cal O}(\eps^2) \\
+ h{\cal O}(\eps) + {\cal O}(h^2) - w,
\end{multline}
where $w = z - p(f(\xi_2),\xi_2)$ satisfies
\begeq
\label{eq6.43}
\abs{{\rm Re}\, w}\geq C_0 \sqrt{\eps} h,\quad {\rm Im}\, w \leq \eps \inf\, Q_{\infty}(\Lambda_0) +
{\cal O}(\sqrt{\eps}h).
\endeq
Now, in view of (\ref{eq6.39}), we have
$$
\widetilde{q}(x_1,f(\xi_2) + hD_{x_1},\xi_2) = \widetilde{q}(x_1,hD_{x_1},0) + {\cal O}(\widetilde{h}),
$$
and arguing as in the discussion of "Case 2" above, we see that we have to invert the problem
\begin{multline}
\label{eq6.45}
\left(g(f(\xi_2) + hD_{x_1},\xi_2)(hD_{x_1})^2 + i\eps \widehat{q}(x_1,hD_{x_1}) + R -w_1\right) \chi_1\left(\frac{hD_{x_1}}{\eps^{\delta}}\right) u = v,
\end{multline}
where $\widehat{q}(x_1,hD_{x_1})$ satisfies the assumptions in Proposition A.4 and
\begeq
\label{eq6.45.1}
R = {\cal O}(\eps \widetilde{h} + \eps^2 +\eps h + h^2): L^2_{\theta_1 + f(\xi_2)}({\bf T}) \rightarrow L^2_{\theta_1 + f(\xi_2)}({\bf T}).
\endeq
The spectral parameter $w_1$ in (\ref{eq6.45}) satisfies, in view of (\ref{eq6.43}),
\begeq
\label{eq6.46}
\frac{1}{\eps} \abs{{\rm Re}\, w_1}\geq C_0 \widetilde{h},\quad \frac{1}{\eps} {\rm Im}\, w_1 \leq {\cal O}(\widetilde{h}).
\endeq

\medskip
\noindent
An application of Proposition A.4 gives, as before,
\begin{multline}
\label{eq6.46.1}
\left(g(f(\xi_2) + hD_{x_1},\xi_2)(hD_{x_1})^2 + i\eps \widehat{q}(x_1,hD_{x_1}) -w_1\right)^{-1} \\
= \eps^{-1} {\cal O}\left(\widetilde{h}^{-2/3}\abs{w_1}^{-1/3} \eps^{1/3}\right): L^2_{\theta_1 + f(\xi_2)}({\bf T}) \rightarrow L^2_{\theta_1 + f(\xi_2)}({\bf T}),
\end{multline}
and using (\ref{eq6.46}), we see that the bound on the operator norm in (\ref{eq6.46.1}) does not exceed
\begeq
\label{eq6.46.2}
\eps^{-1} {\cal O}(C_0^{-1/3} \widetilde{h}^{-1}).
\endeq
To invert the full operator
\begeq
\label{eq6.46.3}
g(f(\xi_2) + hD_{x_1},\xi_2)(hD_{x_1})^2 + i\eps \widehat{q}(x_1,hD_{x_1}) + R -w_1,
\endeq
in the left hand side of (\ref{eq6.45}), in view of (\ref{eq6.45.1}) and (\ref{eq6.46.2}), we have to check that
\begeq
\label{eq6.48}
\eps^{-1} \widetilde{h}^{-1}C_0^{-1/3}\left(\eps\widetilde{h} + \eps^2 + h^2\right) \ll 1,
\endeq
which is satisfied for $C_0 > 1$ large enough, since clearly, $\eps \ll \widetilde{h}$, in view of (\ref{eq6.37.3}). The bound on the norm of the
inverse of the operator in (\ref{eq6.46.3}) is therefore also given by (\ref{eq6.46.2}).

\medskip
\noindent
Combining Proposition 7.2 with the discussion above, including the estimates (\ref{eq6.46.1}), (\ref{eq6.46.2}), we conclude that if $z\in \comp$ satisfies
(\ref{eq6.37.2}) and is such that (\ref{eq6.41}) holds, then the operator $P(x_1,hD_{x},\eps;h) - z$ is invertible, microlocally in the region where
$\xi_1 = {\cal O}(\eps^{\delta})$, $\xi_2 = {\cal O}(\eps^{2\delta})$, with a microlocal inverse of the norm
\begeq
\label{eq6.48.1}
\left(P(x_1,hD_{x},\eps;h) - z\right)^{-1} = {\cal O}(\eps^{-1/2}h^{-1}): L^2_{\theta} \rightarrow L^2_{\theta}.
\endeq
Coming back to (\ref{eq6.20.2}), we therefore obtain for such $z$'s,
\begeq
\label{eq6.48.2}
\norm{U\chi u} \leq {\cal O}(\eps^{-1/2}h^{-1})\norm{v} + {\cal O}(\eps^{-1/2}h^{-1}){\cal O}\left(\frac{\eps h}{\eps^{2\delta}}\right)\norm{u} +
{\cal O}(h^{M'})\norm{u},
\endeq
where $M' \gg 1$. Here we have also used (\ref{eq6.10.1}). Combining (\ref{eq6.48.2}) and (\ref{eq6.20}), we obtain the following result.
\begin{prop}
\label{prop6.3}
Assume that
\begeq
\label{eq6.49}
h^{1/(1-\delta)} \ll \eps \ll h^{6/(5+12\delta)},
\endeq
for some $\delta \in (1/18, 1/9)$. Then the spectrum of the operator $P_{\eps}: H(\Lambda,m) \rightarrow H(\Lambda)$ in the region
\begeq
\label{eq6.50}
\abs{{\rm Re}\, z} \leq \frac{h}{C\sqrt{\eps}}, \quad {\rm Im}\, z \leq \eps \inf\, Q_{\infty}(\Lambda_0) +
{\cal O}(\sqrt{\eps}h).
\endeq
is contained in the disjoint union of the bands of the form
\begeq
\label{eq6.51}
\abs{p(f(\xi_2),\xi_2) - {\rm Re}\, z} \leq C_0 \sqrt{\eps} h, \quad \xi_2 = h(j-\theta_2) = {\cal O}(\widetilde{h}),
\endeq
where $C_0 > 1$ is large enough but fixed.
\end{prop}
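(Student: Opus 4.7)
The plan is to combine the global a priori estimate (\ref{eq6.20}) for the $(1-\chi)$-part with a microlocal inversion of $P_{\eps}-z$ on the support of $\chi$, and to show that outside the bands (\ref{eq6.51}) the latter inversion works uniformly. The microlocal part proceeds via the Fourier integral operator reduction (\ref{eq6.20.2}), which identifies the problem with inverting the direct-sum family $P(x_1,hD_{x_1},\xi_2,\eps;h) - z$ on $L^2_{\theta_1}({\bf T})$, parametrized by quantum numbers $\xi_2=h(j-\theta_2)={\cal O}(\eps^{2\delta})$.

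The first step is to handle quantum numbers with $|\xi_2|\ge h/(C_1\sqrt{\eps})$ by direct appeal to Proposition \ref{prop6.2}; this covers all $\xi_2$ outside the window $|\xi_2|\le \widetilde{h}/C_1$. For the remaining small quantum numbers we now invoke the hypothesis (\ref{eq6.41}) that $z$ lies outside every band. After the unitary conjugation by $e^{if(\xi_2)x_1/h}$ and Taylor-expanding in the small parameter $\xi_2$, we arrive at the one-dimensional equation (\ref{eq6.45}), where the spectral parameter $w_1=z-p(f(\xi_2),\xi_2)$ now satisfies the improved lower bound $|\Re w_1|\ge C_0\sqrt{\eps}h$ coming from (\ref{eq6.41}). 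Rescaling as in (\ref{eq6.31.1}) puts the equation into the exact framework of Proposition A.4 of the Appendix, yielding the resolvent bound (\ref{eq6.46.2}) of order $\eps^{-1}C_0^{-1/3}\widetilde{h}^{-1}$ for the unperturbed operator.

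The key check is that the perturbation $R={\cal O}(\eps\widetilde{h}+\eps^2+\eps h+h^2)$ in (\ref{eq6.45.1}) can be absorbed by a Neumann series against this resolvent bound, which reduces to the inequality (\ref{eq6.48}): $\eps^{-1}\widetilde{h}^{-1}C_0^{-1/3}(\eps\widetilde{h}+\eps^2+h^2)\ll 1$. This is where the range condition (\ref{eq6.49}) enters decisively: in that range one has $\eps\ll\widetilde{h}$, so each term on the left tends to zero and the inequality holds after choosing $C_0$ large enough but fixed. This is also the main technical obstacle, since it is the combination of this bound with the subelliptic estimate of Proposition A.4 that forces the particular range of admissible $\delta$.

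Once the microlocal inverse of norm ${\cal O}(\eps^{-1/2}h^{-1})$ is established, we glue it to the global estimate (\ref{eq6.20}) for $(1-\chi)u$, using the commutator estimate (\ref{eq6.10.1}) to control $[P_\eps,\chi]u$ and the Fourier integral operator remainder (\ref{eq6.20.1}), which is ${\cal O}(h^M)$ with $M$ as large as we wish. This produces a global a priori estimate of the form $\|u\|\le {\cal O}(\eps^{-1/2}h^{-1})\|v\|$ valid for $z$ in the region (\ref{eq6.50}) outside the bands (\ref{eq6.51}). The corresponding estimate for the adjoint follows by the same argument applied to $P_\eps^*-\overline z$, and the combination excludes spectrum of $P_\eps$ outside the union of bands, proving the proposition.
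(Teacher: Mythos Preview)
Your proposal is correct and follows essentially the same route as the paper's own argument: the paper's proof of this proposition is precisely the discussion preceding it, combining the exterior estimate (\ref{eq6.20}) with the microlocal reduction (\ref{eq6.20.2}), invoking Proposition~\ref{prop6.2} for $|\xi_2|\ge h/(C_1\sqrt{\eps})$, and for the remaining small $\xi_2$ using the band-exclusion hypothesis (\ref{eq6.41}) together with Proposition~A.4 and the Neumann-series absorption (\ref{eq6.48}), then gluing via (\ref{eq6.10.1}) and (\ref{eq6.48.2}). Your explicit mention of the adjoint estimate to pass from the a priori bound to absence of spectrum is a standard step the paper leaves implicit.
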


\bigskip
\noindent
We shall finally obtain a precise description of the spectrum of $P_{\eps}$ in the region (\ref{eq6.51}), for a given value of $j\in {\bf Z}$, such that
$\xi_2 = h(j-\theta_2) = {\cal O}(\widetilde{h})$. In doing so, in view of the localization for ${\rm Im}\, z$, we may assume that
\begeq
\label{eq6.52}
\abs{z - p(f(\xi_2),\xi_2) - i \eps \langle{q\rangle}_2(x_1(\xi_2), f(\xi_2),\xi_2)} \leq C_0 \sqrt{\eps} h,
\endeq
where $\xi_2 = h(j-\theta_2) = {\cal O}(\widetilde{h})$, and we then know that only the operator
\begeq
\label{eq6.52.1}
P(x_1,hD_{x_1},\xi_2,\eps;h): L^2_{\theta_1} \rightarrow L^2_{\theta_1}
\endeq
in (\ref{eq6.23}) contributes to the spectrum in this region. Let us introduce the quadratic elliptic operator
\begeq
\label{eq6.53}
Q(t,D_t;\xi_2) = g(f(\xi_2),\xi_2) D_t^2 + \frac{i}{2} \left(\partial_{x_1}^2 \langle{q\rangle}_2 (x_1(\xi_2), f(\xi_2),\xi_2)\right) t^2,
\endeq
and let $e_{k,\xi_2} \in L^2 (\real)$, $k \in \nat$, be eigenfunctions of $Q(t,D_t;\xi_2)$ corresponding to the eigenvalues $\lambda_k(\xi_2)$ given in
(\ref{eq5.32}). Let also $f_{k,\xi_2}$ be eigenfunctions of the adjoint $Q^*(t,D_t;\xi_2)$, corresponding to the eigenvalues $\overline{\lambda_k(\xi_2)}$.
An application of Proposition 6.2 allows us to conclude that if (\ref{eq6.52}) holds and the rescaled spectral parameter
$$
\frac{1}{\sqrt{\eps} h}\left(z - p(f(\xi_2),\xi_2) - i \eps \langle{q\rangle}_2(x_1(\xi_2), f(\xi_2),\xi_2)\right)
$$
avoids a small but fixed neighborhood of the eigenvalues $\lambda_k(\xi_2)$ in the disc $\abs{z} < C_0$, then $z$ is not in the spectrum of the operator in
(\ref{eq6.52.1}), with
$$
\left(P(x_1,hD_{x_1},\eps;h)-z\right)^{-1} = {\cal O}\left(\frac{1}{\eps^{1/2}h}\right): L^2_{\theta_1} \rightarrow L^2_{\theta_1}.
$$
In view of the analysis above, we conclude that then $z\notin {\rm Spec}(P_{\eps})$. It remains therefore for us to discuss
the setup of the global Grushin problem for $P_{\eps}$ when the spectral parameter $z\in \comp$ is such that
\begeq
\label{eq6.53.1}
\frac{1}{\sqrt{\eps}h}\Biggl(z - p(f(\xi_2),\xi_2) - i \eps \langle{q\rangle}_2(x_1(\xi_2), f(\xi_2),\xi_2)\Biggr) \in {\rm neigh}(\lambda_k(\xi_2),\comp),
\endeq
for some $k\in \nat$ with $k = {\cal O}(1)$. Using the notation of Proposition 7.1, let us set
\begeq
\label{eq6.54}
R_+: H(\Lambda) \rightarrow \comp, \quad R_+ u = R_+(\xi_2,k) \left(U\chi u, e_{\xi_2}\right)_{L^2_{\theta_2}},
\endeq
where $e_{\xi_2}(x_2) = e^{i\xi_2 x_2/h}$ and $R_+(\xi_2,k)$ has been introduced in (\ref{eq5.20.01}), using the eigenfunctions $f_{k,\xi_2}$. Define also
\begeq
\label{eq6.55}
R_-: \comp \rightarrow H(\Lambda),\quad R_- u_- = U^{-1}\left(R_-(\xi_2,k)u_- \otimes e_{\xi_2}\right).
\endeq
Here $R_-(\xi_2, k)$ has been introduced in (\ref{eq5.20.01}), and $U^{-1}$ is a microlocal inverse of $U$. Arguing as in Section 6 of~\cite{HiSj1},
we obtain that when (\ref{eq6.53.1}) holds, the Grushin operator
$$
{\cal P}(z)=\begin{pmatrix} (P_{\eps}-z)/\eps  &R_-\\ R_+ &0\end{pmatrix}: H(\Lambda,m)\times \comp \rightarrow H(\Lambda) \times \comp
$$
is invertible, and the corresponding effective Hamiltonian $E_{-+}(z): \comp \rightarrow \comp$ vanishes precisely when $z$ is of the form (\ref{eq5.31}),
(\ref{eq5.32}). This completes the proof of Theorem 2.1.

\section{Numerical illustrations of spectra}
\label{num}
\setcounter{equation}{0}
The purpose of this section is to present the results of numerical computations of the spectra of $P_{\eps}$, in the following situation, which is easily
implemented: let us consider
\begin{equation}
\label{num.1}
\begin{split}
P_\eps &=-h^2\Delta _{x,y}+i\eps q(x,y;hD_x,hD_y ),\\
q(x,y;hD_x,hD_y )&=q_0(x,y)+q_1(x,y)hD_x+q_2(x,y)hD_y,
\end{split}
\end{equation}
on the torus $M={\bf T}_{x,y}^2=({\bf R}/2\pi {\bf Z})^2$. Here $q_0,q_1,q_2$ are real trigonometric polynomials of degree $\le F\in \{1,2,... \}$. We shall consider the
spectrum of this operator near the energy $E_0=1$.

\noindent
\medskip
The general assumptions (\ref{st.6}), (\ref{st.7}), (\ref{st.9}) are fulfilled, the operator $P_{\eps = 0}$ is selfadjoint, and the leading
semiclassical symbol is of the form (\ref{st.10}) with
\begin{equation}\label{num.2}
p=\xi ^2+\eta ^2,\ \ q(x,y;\xi ,\eta )=q_0(x,y)+q_1(x,y)\xi +q_2(x,y)\eta .
\end{equation}
We also have $dp\ne 0$ along $p^{-1}(1)\cap T^*M$.

\medskip
\noindent
The Hamilton flow of $p$ is completely integrable and we have the decomposition (\ref{st.11}), for $p^{-1}(1)$ rather than $p^{-1}(0)$, where
$$
J=\bigcup_{(\xi ,\eta )\in {\bf T}}\Lambda _{\xi ,\eta },\ \ \Lambda _{\xi ,\eta }={\bf T}^2_{x,y}\times \{(\xi ,\eta ) \} .
$$

\medskip
\noindent
We have
$$
q_{\ell}(x,y)=\sum_{|j|,\,|k|\le F}\widehat{q}_\ell (j,k)e^{i(jx+ky)},
$$
where the reality of $q_\ell$ is equivalent to the property,
$$
\widehat{q}_\ell (-j,-k)=\overline{\widehat{q}}_\ell (j,k).
$$
Rather than taking some particular explicit choice of $q$, we generate $\widehat{q}_\ell$ at random by choosing
$$
\widehat{q}_\ell (j,k)=\frac{1}{2}\left(A_\ell (j,k)+\overline{A}_\ell (-j,-k)\right),\ \ A_\ell (j,k)=e^{-\kappa |j-k|}\alpha _{j,k}^\ell,
$$
where $\alpha^\ell_{j,k}\sim {\cal N}(0,1)$ are independent Gaussian random variables. The parameter $\kappa >0$ induces an off-diagonal exponential decay,
corresponding to the assumption that $q(x,y;\xi ,\eta )$ is analytic in $(x,y)$. Then,
$$
q(x,y;\xi ,\eta )=\sum_{(j,k)\in [-F,F]^2}\widehat{q}(j,k;\xi ,\eta )e^{i(jx+ky)},
$$
where
$$
\widehat{q}(j,k;\xi ,\eta )=\widehat{q}_0(j,k)+\widehat{q}_1(j,k)\xi +\widehat{q}_2(j,k)\eta .
$$
Here and below it is understood that $[-F,F]$ is an interval in ${\bf Z}$.

\medskip
\noindent
Let $\Lambda _{\xi ,\eta }\in J$ be a rational torus, so that $(\xi ,\eta )\in {\bf T}$ and $\xi /\eta \in {\bf Q}\cup \{\infty\}$. The
$H_p$-trajectories in $\Lambda _{\xi ,\eta }$ are of the form
$$
\gamma :{\bf R}\ni s\mapsto ((x_0,y_0)+2s(\xi ,\eta ),(\xi ,\eta )).
$$
The restriction of $q$ to such a trajectory
is
\begin{equation}
\label{num.3}
q(\gamma (s);\xi ,\eta )=\sum_{(j,k)\in [-F,F]^2}\widehat{q}(j,k;\xi,\eta )e^{i((x_0,y_0)+2s(\xi ,\eta ))\cdot (j,k)}.
\end{equation}

\medskip
\noindent
For the corresponding limit of the trajectory average,
$$
\langle q\rangle _\gamma =\lim_{T\to \infty}\frac{1}{T}\int_{-T/2}^{T/2}q(\gamma (s);\xi ,\eta )ds,
$$
the terms in (\ref{num.3}) with $(\xi ,\eta )\cdot (j,k)\ne 0$ give a zero contribution, and we get
$$
\langle q\rangle_\gamma =\sum_{(j,k)\in [-F,F]^2\cap (\xi
  ,\eta )^\perp} \widehat{q}(j,k;\xi ,\eta )e^{i(x_0,y_0)\cdot (j,k)}.
$$
If we write $(\xi ,\eta )=(-n,m)/|(m,n)|$ with $(n,m)\in {\bf Z}^2$, $\gcd (n,m)=1$, then ${\bf Z}^2\cap (\xi ,\eta )^\perp = {\bf Z}(m,n)$
and the intersection of this set with $[-F,F]^2$ (viewed as a subset of ${\bf Z}^2$) is equal to
$$
\left\{ \mu (m,n);\ \mu \in {\bf Z},\ |\mu |\le F/\max (|m|,|n|)
  \right\} .
$$
This gives,
\begin{equation}\label{num.4}
\langle q\rangle _\gamma =\sum_{-[F/\max
  (|m|,|n|)]}^{[F/\max (|m|,|n|)]} \widehat{q}(\mu (m,n);\xi ,\eta )e^{i\mu
  t(\gamma )},
\end{equation}
where $t(\gamma )=(x_0,y_0)\cdot (m,n)$ varies in $[0,2\pi [$ and can
take any value in that interval and $[\cdot ]$ denotes the integer part. It follows that
$$
Q_\infty (\Lambda _{\xi ,\eta })=\left\{ \sum_{-[F/\max
  (|m|,|n|)]}^{[F/\max (|m|,|n|)]} \widehat{q}(\mu (m,n);\xi,\eta)e^{i\mu t};\ 0\le t\le 2\pi  \right\} .
$$
When $\max (|m|,|n|)>F$ this interval reduces to the torus average $\widehat{q}(0,0;\xi ,\eta )=\langle q\rangle_{\Lambda _{\xi ,\eta }}$, so we get
non-trivial intervals only for the finitely many values $(m,n)\in
[-F,F]^2$ with $\gcd (m,n)=1$.

\medskip
\noindent
We have written MatLab programs for the production of $q$ and for the calculation of $\langle q\rangle _\Lambda $, $Q_\infty (\Lambda )$, as well
as the supremum and infimum of $q$ over each torus in $J$. For the graphics, we parametrize $J$ by $\mathrm{arg\,}(\xi +i\eta )$ and the figure below shows:
\begin{itemize}
\item \textcolor{blue}{The torus average $\langle q\rangle_\Lambda $,}
\item \textcolor{green}{The torus max and min of $q$,}
\item \textcolor{red}{$Q_\infty (\Lambda )$ for each
    relevant rational torus.}
\end{itemize}
\includegraphics[width=400pt]{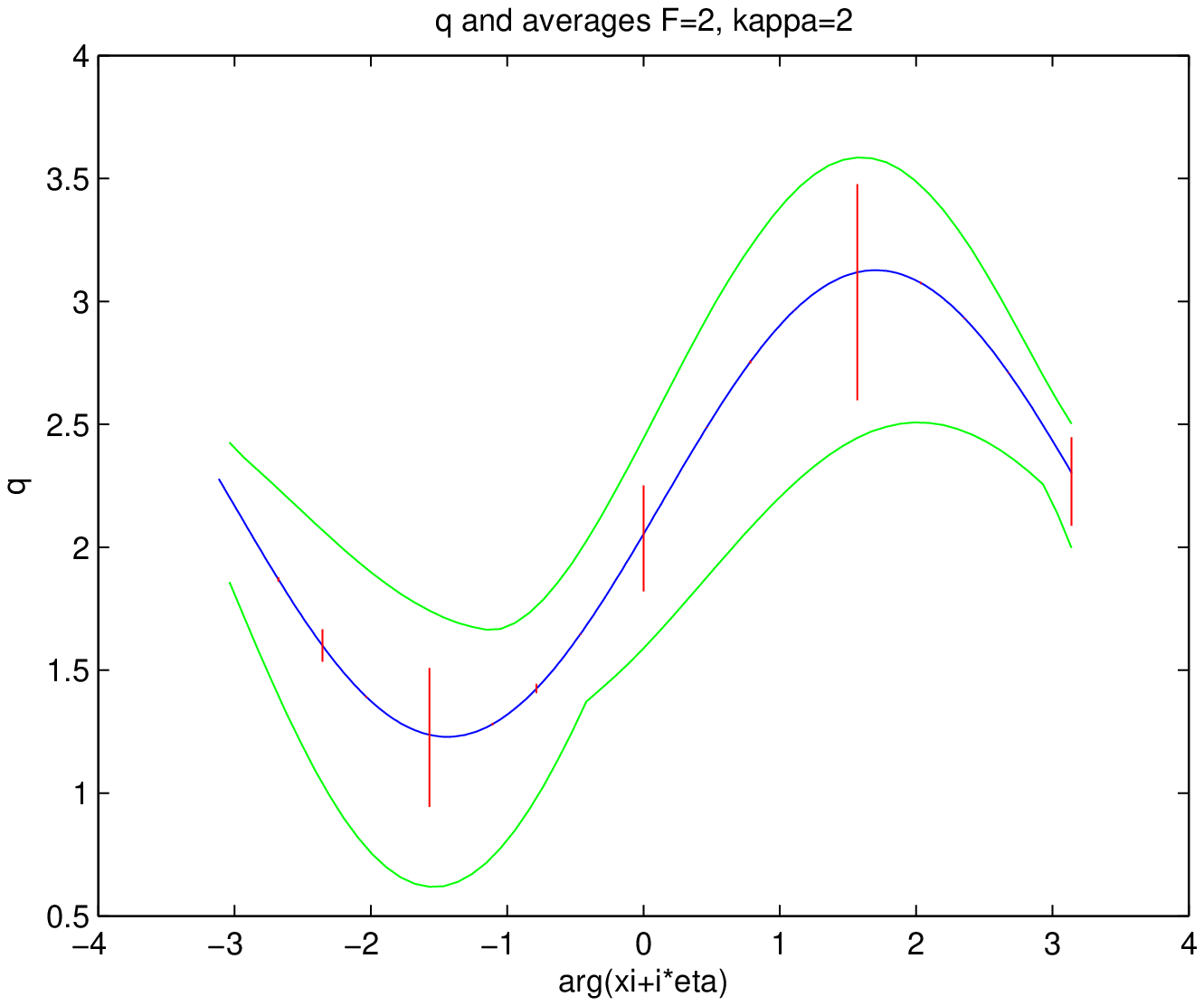}

\medskip
\noindent
By running the simulation several times we get a series of figures where quite a few exhibit the features above. In order to have a
numerical illustration of the main result of this work it is important that some of the vertical segments (corresponding to $Q_\infty
(\Lambda )$ for rational tori) reach above the supremum or below the infimum of the curve of torus averages. A larger $F$ will produce a
richer picture with more vertical segments, but it will also complicate the numerical calculations of the eigenvalues, so we settled for $F=2$ as a
reasonable choice. We also found that $\kappa =2$  produces some -- not too many -- visible vertical segments.

\medskip
\noindent
Once an interesting $q$ has been selected, we compute the spectrum numerically by working on the level of Fourier
coefficients. Thus, if we are interested in the eigenvalues with real parts in $[E_1,E_2]$, where $E_1<E_2$ are close to 1, we work with
Fourier modes $e^{i(jx+ky)}$ for $(j,k)$ in the set ${\cal E}$ of $(j,k)\in {\bf Z}^2$, satisfying $(hj)^2+(hk)^2\in [E_1,E_2]$, i.e.
$$
|(j,k)|\in [\sqrt{E_1}/h, \sqrt{E_2}/h ].
$$
The number $\#{\cal E}$ of such modes is $\approx \pi (E_2-E_1)/h^2$ and we ask MatLab to compute the spectrum of the ${\cal E}\times {\cal
E}$-matrix ${\cal A}_\eps =\left( a_\eps (j,k;\widetilde{j},\widetilde{k}) \right)_{(j,k),\,(\widetilde{j},\widetilde{k})\in {\cal E}}$, given by
\[\begin{split}
a_\eps (j,k;\widetilde{j},\widetilde{k})=&h^2(j^2+k^2)\delta
_{(j,k),(\widetilde{j},\widetilde{k})}\\ &+i\eps
\left( \widehat{q}_0(j-\widetilde{j},k-\widetilde{k})
+\widehat{q}_1(j-\widetilde{j},k-\widetilde{k})h\widetilde{j}
+\widehat{q}_2(j-\widetilde{j},k-\widetilde{k})h\widetilde{k}\right) .
\end{split}
\]
We cannot let $\# {\cal E}$ be larger than a few thousand and still we would like $h$ to be small and the energy shell $[E_1,E_2]$ thick
enough so that the eigenvalues with real part inside are not influenced by boundary effects. In the simulations below we have
chosen the same $q$ as the one in the figure above and we settled for $E_1=0.85$, $E_2=1$, $h=1/100$, leading to $\#{\cal E}\approx
5000$. Since the spectra are of width $\eps$, we rescale the imaginary axis and represent graphically the set of $(\Re z, \Im
z/\eps)$ for $z$ in the spectrum of $P_\eps $. We let $\eps$ take the values $h/2,\, h,\, 2h,\, 4h,\, 8h,\, 16h$, in agreement with Theorem 2.1.\\
\includegraphics[width=400pt]{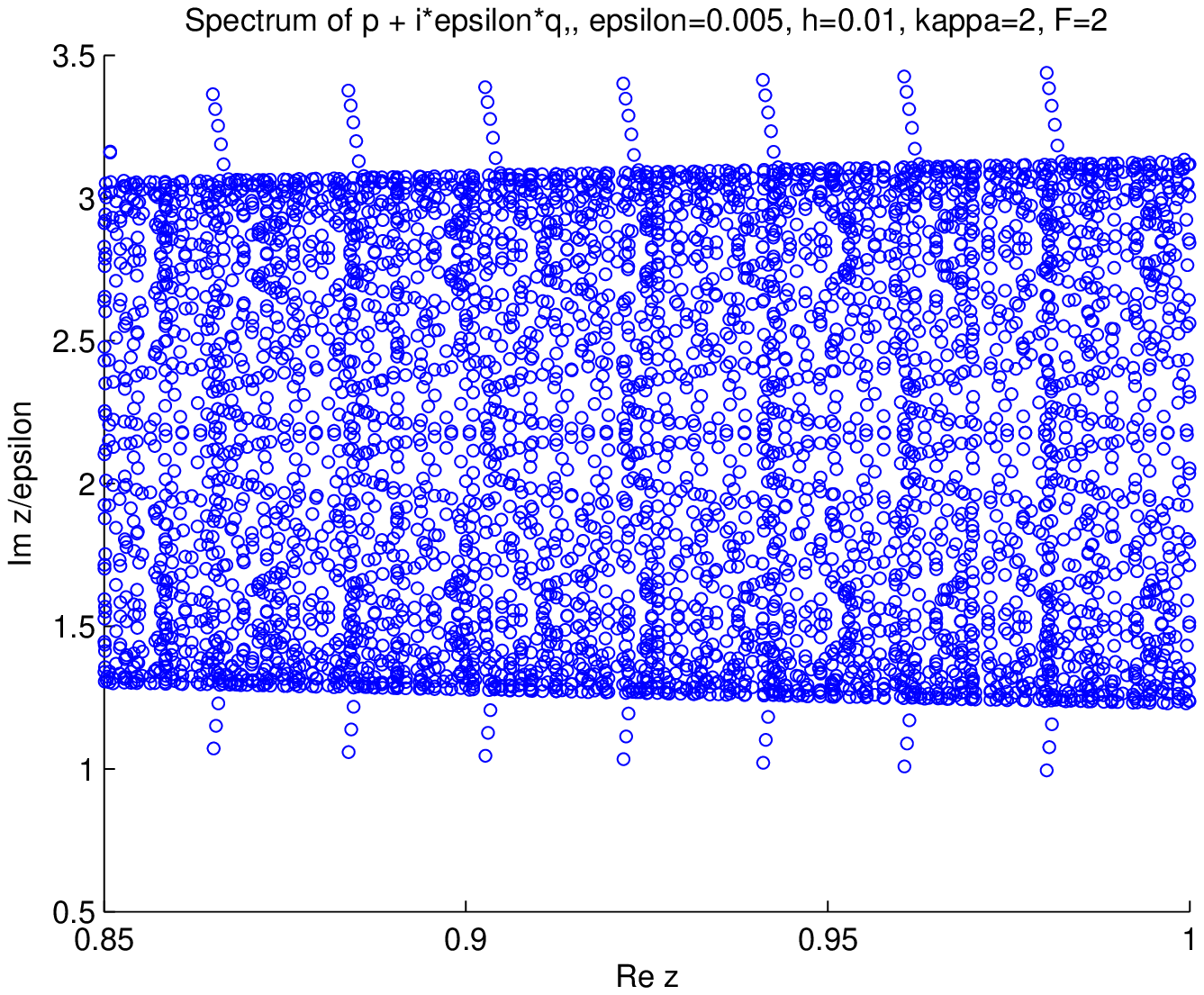}\\
\includegraphics[width=400pt]{fig2.eps}\\
\includegraphics[width=400pt]{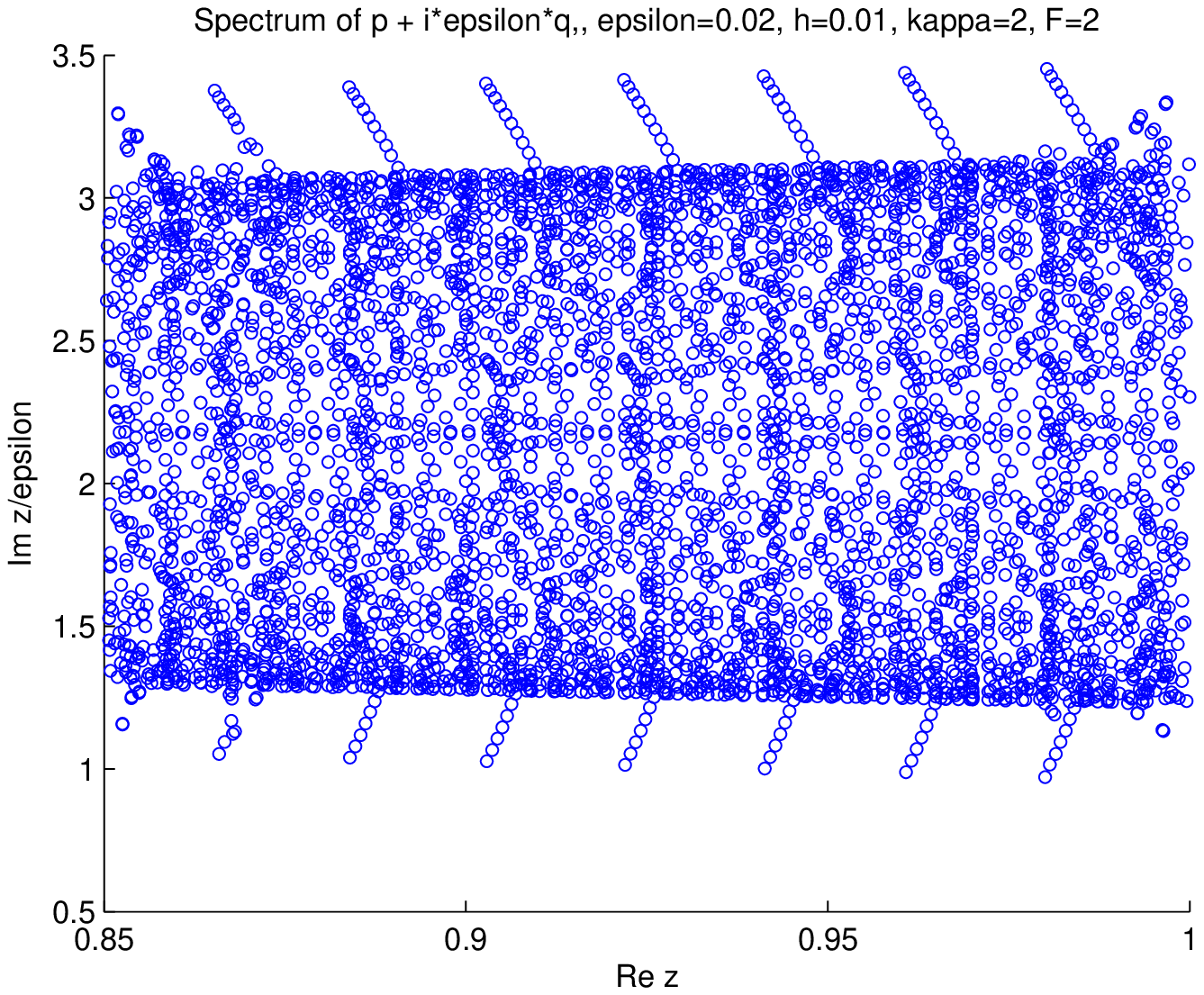}\\
\includegraphics[width=400pt]{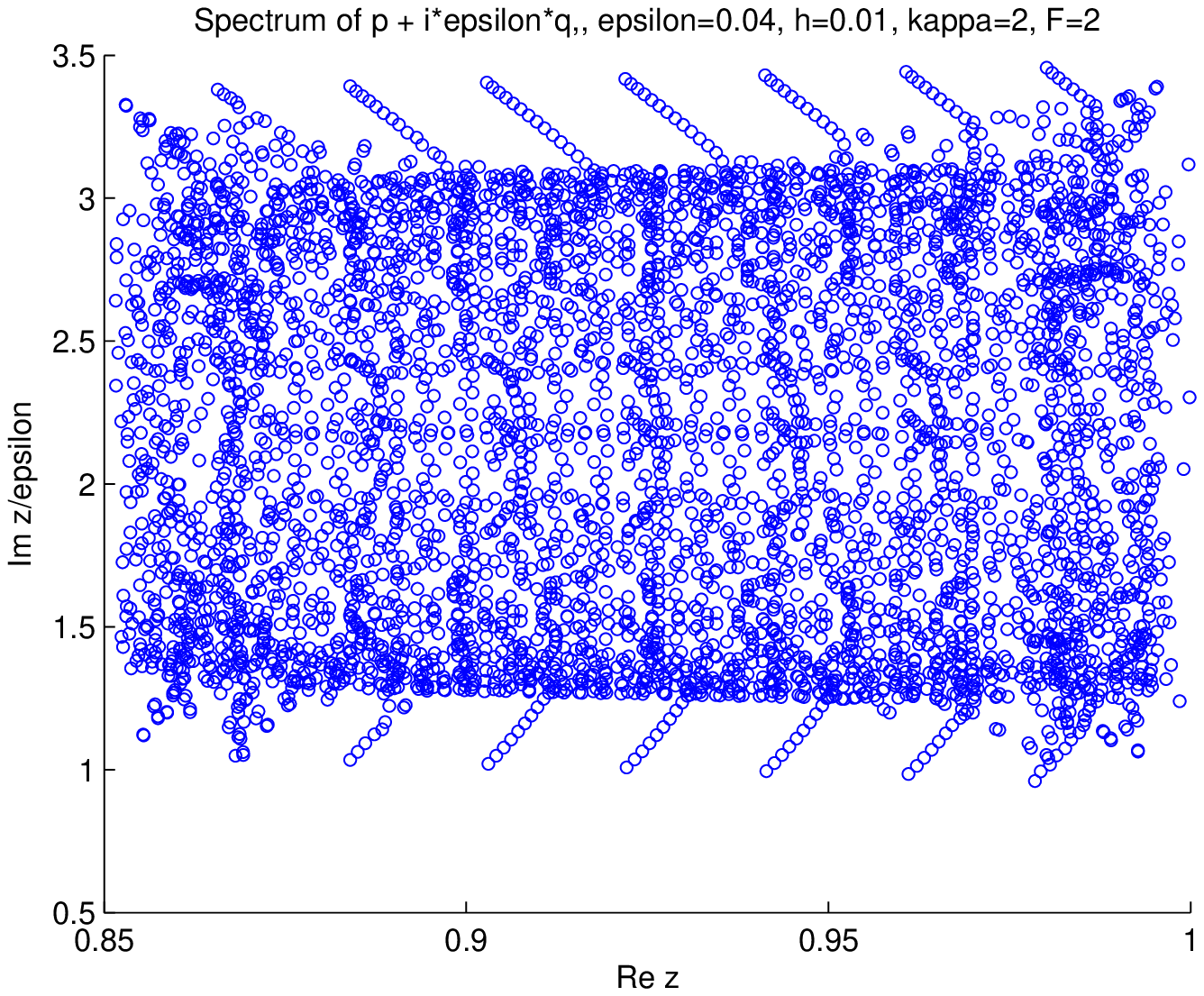}\\
\includegraphics[width=400pt]{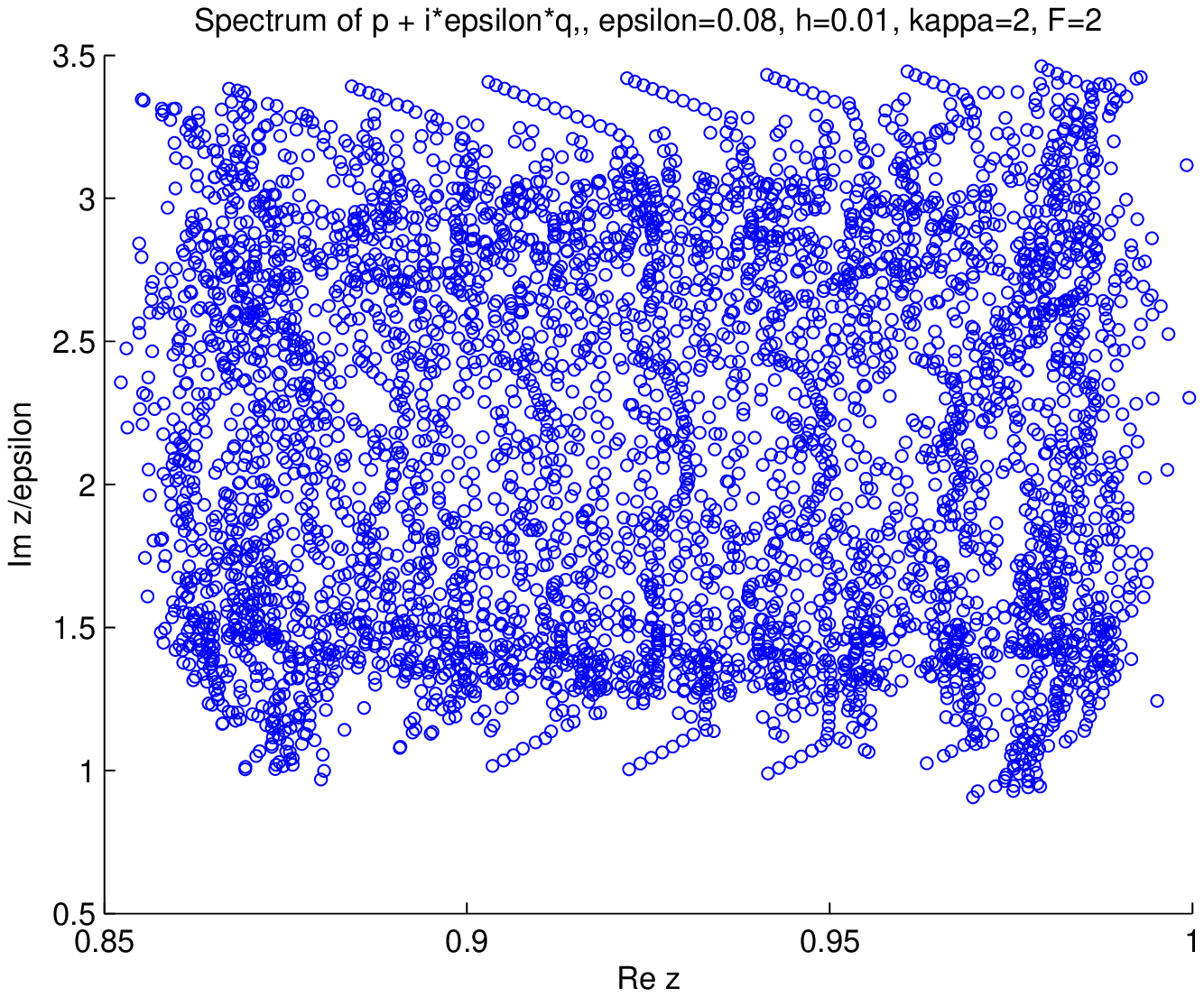}\\
\includegraphics[width=400pt]{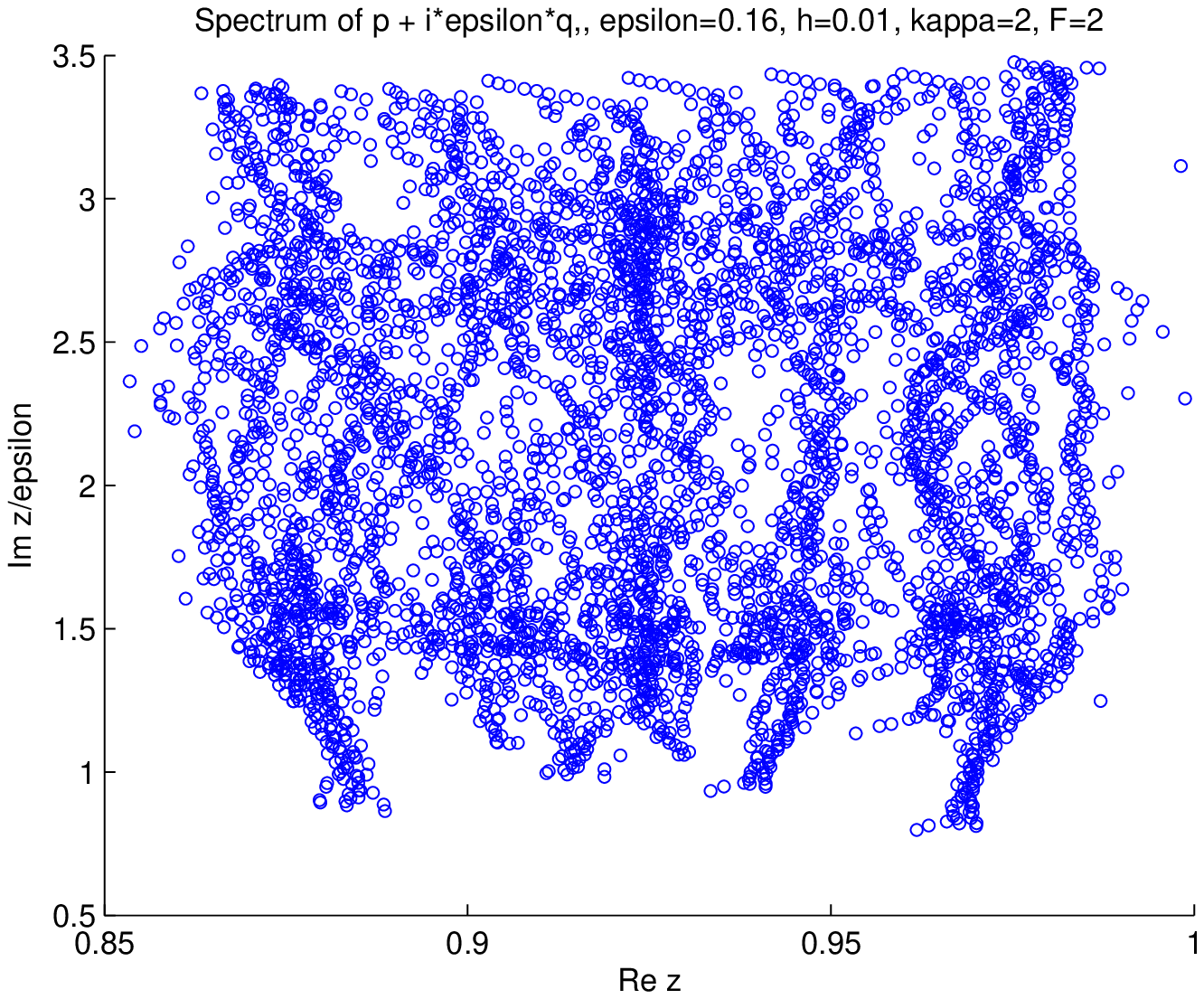}

\medskip
\noindent
These eigenvalues form a kind of a centipede with legs sticking out from the main body. The majority of the eigenvalues are in the body whose position
corresponds nicely to the range of the curve of torus averages on the first picture. The legs reach out to the supremum of the highest
and the infimum of the lowest vertical segments corresponding to $Q_\infty (\Lambda )$ for rational tori $\Lambda$.

\medskip
\noindent
Undoing the scaling of ${\rm Im}\,z$, the inclination of the legs should theoretically be close to 45 degrees and by measuring this for one of
the legs on one of the figures we found some (but not excellent) agreement.

\medskip
\noindent
The main result of this work, Theorem 2.1, describes the individual eigenvalues near the extremities of the legs in terms of rational tori. A
mathematical treatment of the eigenvalues further inside seems more difficult because of the pseudospectral effects that are likely to get stronger there.

\medskip
\noindent
By staring at the pictures directly from the pdf file and creating a movie by switching the pages, we see that most of the (rescaled)
eigenvalues remain fixed while those in the legs and some others move. The fixed ones probably correspond to irrational tori and the moving ones to tori that
are rational.

\begin{appendix}
\section{Subelliptic estimates for Schr\"odinger type ope\-ra\-tors}
\label{Schr}
\setcounter{equation}{0}
The purpose of this appendix is to establish suitable resolvent estimates for some non-selfadjoint operators of Schr\"odinger type, instrumental in the
pseudospectral analysis of Section 7. While in the considerations of Section 7, we are concerned with operators on the one-dimensional torus, it will be convenient
to analyze the case of $\real$ first. See also~\cite{LeLe}.

\medskip
\noindent
Let
\begeq
\label{app1}
P_0 = g(hD_x)(hD_x)^2 + i V(x),\quad V\in C^{\infty}(\real;\real).
\endeq
Assume that the function $g\in C^{\infty}(\real;\real)$ is such that
\begeq
\label{app1.001}
g -1 \in C^{\infty}_0(\real)
\endeq
with
\begeq
\label{app1.01}
g\geq 1,\quad \abs{\xi g'(\xi)} \ll 1.
\endeq
We may notice that the conditions (\ref{app1.01}) are invariant under the scaling $g(\xi)\rightarrow g(\lambda\xi)$, $\lambda > 0$.
We also assume that the potential $V$ is such that
\begeq
\label{app1.1}
0\leq V,\quad \partial_x^j V \in L^{\infty}(\real), \quad j \geq 2,
\endeq
and let us make the ellipticity assumption,
\begeq
\label{app2}
V(x) \geq \frac{x^2}{C},\quad \abs{x}\geq C,
\endeq
for some constant $C>0$. The semiclassical symbol of $P_0$, $p_0(x,\xi) = g(\xi)\xi^2 + iV(x)$ satisfies $p_0\in S(m)$, where $m(x,\xi) = 1 + x^2 + \xi^2$, and when
equipped with the domain $H(m)$, the natural Sobolev space associated to the order function $m$, the operator $P_0$ becomes closed densely defined on
$L^2(\real)$. The spectrum of $P_0$ is discrete and we notice that
\begeq
\label{app3}
{\rm Spec}\,(P_0) \subset p_0(\real^{2}) = \{z\in \comp; {\rm arg}\,z \in [0,\pi/2]\}.
\endeq

\bigskip
\noindent
Let us make the basic assumption that
\begeq
\label{app3.1}
V^{-1}(0) = \{0\} \subset \real
\endeq
and that
\begeq
\label{app3.2}
V''(0)>0.
\endeq
We are interested in estimates for the resolvent of $P_0$,
$$
\left(P_0 - z\right)^{-1}: L^2(\real) \rightarrow L^2(\real),
$$
when the spectral parameter $z\in \comp$ is such that $0\leq {\rm Im}\, z \leq {\cal O}(h)$ and $\abs{z}\gg h$. When establishing those, we shall combine
some of the results and techniques of~\cite{HeSjSt} and~\cite{HiPrSt}.

\bigskip
\noindent
In what follows, rather than working with $P_0$, it will be convenient to consider the operator
\begeq
\label{app4}
P = -iP_0 = V(x) - i g(hD_x)(hD_x)^2
\endeq
with the symbol $p = p_1 + ip_2$, where
$$
p_1(x,\xi) = V(x) = V_0(x) + {\cal O}(x^3),\quad V_0(x) = \frac{1}{2} V''(0)x^2 > 0,
$$
and
$$
p_2(x,\xi) = -g(\xi)\xi^2.
$$
It follows from (\ref{app1.01}) that
$$
\abs{\partial_{\xi}p_2} \sim \abs{\xi},
$$
and therefore we obtain the fundamental property
\begeq
\label{app6}
V_0(x) + H_{p_2}^2 V_0(x,\xi)\sim \abs{(x,\xi)}^2, \quad (x,\xi) \in \real^2.
\endeq
Following~\cite{HeSjSt}, let us set, writing $X = (x,\xi)\in \real^2$,
\begeq
\label{app7}
G_0(X;h) = h^{2/3} \frac{H_{p_2}V_0}{\abs{X}^{4/3}} \psi\left(M \frac{V_0(x)}{(h\abs{X})^{2/3}}\right),\quad \abs{X}\geq h^{1/2}.
\endeq
Here $M\geq 1$ is a constant to be taken large enough and $\psi\in C^{\infty}_0(\real; [0,1])$ is such that ${\rm supp}\,(\psi) \subset (-2,2)$ and $\psi = 1$
on $[-1,1]$. It is then straightforward to verify that in the region where $\abs{X}\geq h^{1/2}$, we have
\begeq
\label{app7.1}
G_0 = {\cal O}(h),\quad H_{G_0} = {\cal O}(1)\frac{h^{2/3}}{\abs{X}^{1/3}} = {\cal O}(h^{1/2}),
\endeq
and
\begeq
\label{app7.2}
\partial^2 G_0 = {\cal O}(1)\left(\frac{h^{2/3}}{\abs{X}^{4/3}} + \frac{h^{1/3}}{\abs{X}^{2/3}} + \frac{h^{2/3}}{\abs{X}^{1/3}}\right) = {\cal O}(1).
\endeq
Indeed, the validity of (\ref{app7.1}) and (\ref{app7.2}) follows easily once we observe that in the region where $0\leq M V_0(x) \leq 2 (h\abs{X})^{2/3}$,
and $\abs{X}\geq h^{1/2}$, we have
$$
H_{p_2}V_0 = {\cal O}(\abs{X} h^{1/3}\abs{X}^{1/3}),\quad \nabla(H_{p_2}V_0) = {\cal O}(\abs{X}),\quad \nabla^2 (H_{p_2}V_0) = {\cal O}(1 + \abs{X}),
$$
and
$$
\nabla^j \left(\psi\left(M \frac{V_0(x)}{(h\abs{X})^{2/3}}\right)\right) = {\cal O}(1)\frac{1}{h^{j/3}\abs{X}^{j/3}},\quad j =1,2.
$$

\medskip
\noindent
Still working in the region $\abs{X}\geq h^{1/2}$ and following~\cite{HeSjSt} closely, let us obtain a lower bound for the function $V_0 + \eps_0 H_{p_2} G_0$,
where $\eps_0 > 0$ is a constant to be chosen. We have, in view of (\ref{app7.1}),
$$
H_{p_2}G_0 = {\cal O}(h^{2/3} \abs{X}^{2/3}),
$$
and therefore, in the region where $M V_0\geq h^{2/3}\abs{X}^{2/3}$, we get
$$
V_0 + \eps_0 H_{p_2} G_0 \geq \left(\frac{1}{M} - {\cal O}(\eps_0)\right) h^{2/3} \abs{X}^{2/3} \geq \frac{1}{{\cal O}(1)M} h^{2/3} \abs{X}^{2/3},
$$
if we choose $\eps_0 > 0$ small enough. In the region where $M V_0 < h^{2/3}\abs{X}^{2/3}$, we have
$$
G_0 = h^{2/3} \frac{H_{p_2}V_0}{\abs{X}^{4/3}},
$$
and therefore
$$
V_0 + \eps_0 H_{p_2} G_0 = V_0 + \eps_0 h^{2/3} \abs{X}^{-4/3} H_{p_2}^2 V_0 + R,
$$
where
$$
R = \eps_0 h^{2/3} (H_{p_2}V_0) H_{p_2} \abs{X}^{-4/3} = {\cal O}\left(\frac{\eps_0 h}{M^{1/2}}\right) =
{\cal O}\left(\frac{\eps_0}{M^{1/2}}\right)h^{2/3}\abs{X}^{2/3}.
$$
Using (\ref{app6}), we see therefore that
$$
V_0 + \eps_0 H_{p_2} G_0 \geq \eps_0 \frac{h^{2/3}\abs{X}^{2/3}}{{\cal O}(1)} - {\cal O}\left(\frac{\eps_0}{M^{1/2}}\right)h^{2/3}\abs{X}^{2/3} \geq
\eps_0 \frac{h^{2/3}\abs{X}^{2/3}}{{\cal O}(1)},
$$
provided that we take $M$ sufficiently large but fixed. It follows that in the entire region where $\abs{X}\geq h^{1/2}$, we get
\begeq
\label{app7.3}
V_0 + \eps_0 H_{p_2}G_0 \geq \frac{h^{2/3}\abs{X}^{2/3}}{{\cal O}(1)}.
\endeq
We shall now extend the definition of $G_0$ to all of $\real^2$, and following~\cite{HeSjSt}, let us set
\begeq
\label{app7.4}
G(X;h) = \left(1-\chi\left(\frac{X}{h^{1/2}}\right)\right)G_0(X;h).
\endeq
Here $\chi\in C^{\infty}_0(\real^2;[0,1])$ is such that $\chi = 1$ when $\abs{X} \leq 1$. It follows from (\ref{app7.1}) and (\ref{app7.2}) that
$$
G = {\cal O}(h),\quad H_G = {\cal O}(h^{1/2}), \quad \partial^2 G = {\cal O}(1).
$$
Furthermore, using (\ref{app7.3}) we immediately check that on all of $\real^2$, we have
$$
V_0 + \eps_0 H_{p_2} G \geq \frac{h^{2/3}\abs{X}^{2/3}}{{\cal O}(1)} - {\cal O}(h).
$$
We may therefore summarize the discussion above by stating that there exist constants $\eps_0 > 0$ and $c_0 > 0$, such that we have for all $h>0$ sufficiently small,
\begeq
\label{app8}
V_0(x) + \eps_0 H_{p_2} G(X) + c_0 h \geq \frac{h^{2/3}\abs{X}^{2/3}}{{\cal O}(1)}, \quad X=(x,\xi)\in \real^{2}.
\endeq
Here the real-valued weight function $G = G(X,h)\in C^{\infty}(\real^2)$ has been defined in (\ref{app7}), (\ref{app7.4}).

\medskip
\noindent
It follows from (\ref{app2}), (\ref{app3.1}), (\ref{app3.2}) that $p_1(X) = V(x) \geq (1/C)V_0(x)$, for some constant $C>1$, and therefore using (\ref{app8}), we
get with $\eps_1 > 0$, $c_1 > 0$,
\begeq
\label{app11}
{\rm Re}\, p(X) + \eps_1 H_{{\rm Im}\, p} G(X) + c_1 h \geq \frac{1}{{\cal O}(1)} \left(h\abs{X}\right)^{2/3},\quad X\in \real^{2}.
\endeq

\medskip
\noindent
The estimate (\ref{app11}) is analogous to the estimate (4.26) of Section 4 of~\cite{HiPrSt}, if we take $k_0 = 1$ there. Taking (\ref{app11}) as the starting point and arguing
exactly as in that work, we find that everything works without any change, provided that the spectral parameter $z\in \comp$ is such that for some fixed $C_0>1$,
we have
\begeq
\label{app12}
{\rm Re}\, z \leq {\cal O}(1) h^{2/3}\abs{z}^{1/3}, \quad Ch \leq \abs{z}\leq C_0.
\endeq
Here $C\gg 1$ is a constant large enough and the implicit constant in (\ref{app12}) does not depend on $C$. We therefore obtain the a priori estimate
\begeq
\label{app13}
h^{2/3}\abs{z}^{1/3}\norm{u}_{L^2} \leq {\cal O}(1)\norm{(P-z)u}_{L^2},\quad u\in {\cal S}(\real^n),
\endeq
for $z$ satisfying (\ref{app12}).

\medskip
\noindent
It therefore remains to discuss the case when $z\in \comp$ is such that
\begeq
\label{app14}
{\rm Re}\, z \leq {\cal O}(1) h^{2/3}\abs{z}^{1/3},\quad \abs{z} \geq C_0.
\endeq
Continuing to follow the arguments of Section 4 in~\cite{HiPrSt}, we obtain from the equation (4.34) there that there exist positive constants $c_1$, $c_2$
such that for $z\in \comp$ satisfying (\ref{app14}), we have
\begeq
\label{app15}
\norm{(P-z)u}_{L^2}\, \norm{u}_{L^2} + c_1 h^{2/3}\abs{z}^{1/3}\left(\varphi\left(\frac{\abs{X}^2}{\abs{z}}\right)^w u,u\right) \geq c_2 h^{2/3}\abs{z}^{1/3}
\norm{u}_{L^2}^2.
\endeq
Here $\varphi\in C^{\infty}_0(\real^{2n};[0,1])$ is a cut-off near $0$ such that
\begeq
\label{app16}
\abs{p(X)-z} \geq \frac{\abs{z}}{2},
\endeq
on the support of $\varphi(\abs{X}^2/\abs{z})$. The spectral parameter $z$ in (\ref{app14}) can be arbitrarily large and when estimating the scalar product in
the left hand side of (\ref{app15}), we can apply Lemma 8.2 in~\cite{HeSjSt}, exactly as it stands, to conclude that
\begeq
\label{app17}
\left(\varphi\left(\frac{\abs{X}^2}{\abs{z}}\right)^w u,u\right) \leq \frac{{\cal O}(1)}{\abs{z}^2} \norm{(P-z)u}_{L^2}^2 + {\cal O}(1)h \norm{u}^2_{L^2}.
\endeq
Indeed, it is easily seen that the proof of Lemma 8.2 of~\cite{HeSjSt} applies in the present situation, using the ellipticity property (\ref{app16}) and the fact
that the symbol $p$ satisfies
\begeq
\label{app18}
\abs{p(X)}\leq {\cal O}(1)\abs{X}^2,\quad X\in \real^2, \quad \partial^{\alpha} p\in L^{\infty}(\real^2),\quad \abs{\alpha}\geq 2.
\endeq

\medskip
\noindent
Combining (\ref{app15}) and (\ref{app17}), we get
\begeq
\label{app19}
Z \norm{u}^2_{L^2} \leq {\cal O}(1)\frac{Z}{\abs{z}^2} \norm{(P-z)u}_{L^2}^2 + {\cal O}(1)\norm{(P-z)u}_{L^2}\, \norm{u}_{L^2}.
\endeq
Here we have written $Z=h^{2/3}\abs{z}^{1/3}$ for brevity. It follows that
\begeq
\label{app20}
Z \norm{u}^2 \leq {\cal O}(1)\left(\frac{1}{Z} + \frac{Z}{\abs{z}^2}\right)\norm{(P-z)u}_{L^2}^2,
\endeq
and using also that $Z \leq \abs{z}$, we get
\begeq
\label{app21}
h^{2/3}\abs{z}^{1/3} \norm{u}_{L^2} \leq {\cal O}(1)\norm{(P-z)u}_{L^2}.
\endeq

\bigskip
\noindent
We summarize the discussion above in the following result.
\begin{theo}
Let $P_0 = g(hD_x)(hD_x)^2 + iV(x)$ be such that {\rm (\ref{app1.001})}, {\rm (\ref{app1.01})}, {\rm (\ref{app1.1})}, {\rm (\ref{app2})}, {\rm (\ref{app3.1})},
{\rm (\ref{app3.2})} hold. There exist constants $c_0>0$ and $c_1>0$ such that for every $C>1$ large enough, we have for
\begeq
\label{app21.1}
{\rm Im}\, z \leq c_1 h^{2/3}\abs{z}^{1/3},\quad \abs{z}\geq Ch,
\endeq
the following estimate,
\begeq
\label{app22}
h^{2/3}\abs{z}^{1/3} \norm{u}_{L^2} \leq c_1 \norm{(P_0-z)u}_{L^2},\quad u\in H(m).
\endeq
\end{theo}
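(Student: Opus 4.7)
\medskip
\noindent
The plan is to reduce to studying the operator $P = -iP_0$, whose symbol $p = p_1 + ip_2$ satisfies $p_1 = V$ and $p_2 = -g(\xi)\xi^2$. Under the correspondence $z \mapsto w = -iz$, one has $\Re w = \Im z$ and $\abs{w} = \abs{z}$, so the spectral region (\ref{app21.1}) transforms into $\{\Re w \leq c_1 h^{2/3}\abs{w}^{1/3},\, \abs{w}\geq Ch\}$, and it suffices to prove the analogous a priori estimate for $P - w$. The key structural observation is that the harmonic approximation $V_0(x) = \frac{1}{2}V''(0)x^2$ satisfies the bracket-type lower bound $V_0(x) + (H_{p_2}^2 V_0)(x,\xi) \sim \abs{(x,\xi)}^2$, which reflects the fact that $\Re p$ vanishes only at $X = 0$ and drives the subelliptic gain.

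\medskip
\noindent
Following the Helffer-Sj\"ostrand scheme, I would construct the bounded weight $G(X;h)\in C^\infty(\real^2)$ via (\ref{app7}) and (\ref{app7.4}), verifying that $G = {\cal O}(h)$, $H_G = {\cal O}(h^{1/2})$, $\partial^2 G = {\cal O}(1)$, and — as is worked out in the body of the appendix — that $V_0 + \eps_0 H_{p_2} G + c_0 h \geq (h\abs{X})^{2/3}/{\cal O}(1)$ on all of $\real^2$. Combined with $V \geq V_0/C$ from (\ref{app2}), (\ref{app3.1}), (\ref{app3.2}), this yields the fundamental estimate (\ref{app11}). With this weight in hand the standard bounded-exponential-weight technique applies: the conjugated operator $\widetilde P = e^{-\eps_1 G^w/h} P e^{\eps_1 G^w/h}$ has Weyl symbol $\Re \widetilde p = \Re p + \eps_1 H_{\Im p} G + {\cal O}(h^2)$, and an application of the sharp G\aa{}rding inequality together with (\ref{app11}) produces the required bound $h^{2/3}\abs{w}^{1/3}\norm{u} \leq {\cal O}(1)\norm{(P-w)u}$ in the bounded regime $Ch\leq \abs{w}\leq C_0$. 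This step is essentially the argument of~\cite{HiPrSt} transcribed to the present setting, made legitimate by (\ref{app11}).

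\medskip
\noindent
For the complementary regime $\abs{w}\geq C_0$, I would follow~\cite{HiPrSt} once more to derive the quadratic-form inequality (\ref{app15}), containing a remainder scalar product of the form $(\varphi(\abs{X}^2/\abs{w})^w u, u)$ with $\varphi\in C^\infty_0$ supported where the ellipticity $\abs{p - w}\geq \abs{w}/2$ holds. The crucial estimate (\ref{app17}), namely $(\varphi(\abs{X}^2/\abs{w})^w u, u) \leq {\cal O}(\abs{w}^{-2})\norm{(P-w)u}^2 + {\cal O}(h)\norm{u}^2$, would then follow from Lemma 8.2 of~\cite{HeSjSt}, whose proof carries over verbatim since the symbol $p$ satisfies the uniform bounds (\ref{app18}). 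Substituting this into (\ref{app15}), dividing by $\norm{u}$, and using $h^{2/3}\abs{w}^{1/3}\leq \abs{w}$ to absorb a $Z/\abs{w}^2$ factor as in (\ref{app20}) closes the estimate and yields (\ref{app22}) in this regime as well.

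\medskip
\noindent
I expect the main technical obstacle to be the uniform verification of the weight inequality (\ref{app8}). The construction of $G$ must interpolate between two distinct regimes: the region $MV_0 \geq (h\abs{X})^{2/3}$, where $V_0$ by itself dominates, and the transition region $MV_0 < (h\abs{X})^{2/3}$, where one exploits $H_{p_2}^2 V_0/\abs{X}^{4/3}$ via the bracket identity and must absorb the commutator remainder $R = \eps_0 h^{2/3}(H_{p_2}V_0)H_{p_2}\abs{X}^{-4/3} = {\cal O}(\eps_0 M^{-1/2})h^{2/3}\abs{X}^{2/3}$ by taking $M$ large enough. A secondary point is to ensure that all constants $\eps_0,\eps_1,c_0,c_1$ and the smallness threshold on $h$ can be chosen independently of $\abs{w}$, so that the bounded and unbounded regimes glue seamlessly at $\abs{w}\sim C_0$ without degrading the gain $h^{2/3}\abs{w}^{1/3}$.
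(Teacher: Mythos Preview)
Your proposal is correct and follows essentially the same route as the paper: the rotation $P = -iP_0$, the construction of the bounded weight $G$ via (\ref{app7}), (\ref{app7.4}) leading to (\ref{app11}), the appeal to the arguments of~\cite{HiPrSt} in the bounded regime $Ch \leq \abs{w} \leq C_0$, and the combination of (\ref{app15}) with Lemma 8.2 of~\cite{HeSjSt} for $\abs{w} \geq C_0$ are exactly the steps the paper carries out. Your identification of the interpolation in (\ref{app8}) as the main technical point, including the absorption of the remainder $R$ by choosing $M$ large, matches the paper's treatment precisely.
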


\medskip
\noindent
It follows that when $z$ satisfies (\ref{app21.1}) then $z$ is in the resolvent set of $P_0$ and we get the resolvent estimate
\begeq
\label{app23}
(P_0 -z)^{-1} = {\cal O}\left(h^{-2/3}\abs{z}^{-1/3}\right): L^2(\real) \rightarrow L^2(\real).
\endeq

\medskip
\noindent
{\it Remark}. The discussion above and the result of Theorem A.1 extends to the case of operators on $\real^n$.

\bigskip
\noindent
In what follows, the result of Theorem A.1 will only be applied in the case when ${\rm Im}\, z = {\cal O}(h)$. Furthermore, in the considerations in Section 7, we
are concerned with operators on the one-dimensional torus ${\bf T} = \real/2\pi \z$, and our next task is therefore to adapt Theorem A.1 to this setting. Let
us consider therefore
\begeq
\label{app23.1}
P = g(hD_x)(hD_x)^2 + i V(x),
\endeq
where $g\in C^{\infty}(\real;\real)$ satisfies (\ref{app1.001}), (\ref{app1.01}), and let $0\leq V \in C^{\infty}({\bf T})$ be such that $V^{-1}(0) = \{x_0\}$,
with $V''(x_0) > 0$. We may write
\begeq
\label{app23.15}
P = (hD_x)^2 + \varphi(hD_x) + i V(x),\quad \varphi \in C^{\infty}_0(\real;\real).
\endeq
Let $\chi \in C^{\infty}({\bf T}^n;[0,1])$ be supported in a small neighborhood of $x_0$, and such that $\chi = 1$ near $x_0$. On the support of $\chi$, the result
of Theorem A.1 can be applied and we conclude that
\begeq
\label{app23.2}
h^{2/3}\abs{z}^{1/3} \norm{\chi u}_{L^2} \leq {\cal O}(1) \norm{(P-z)\chi u}_{L^2} + {\cal O}(h^{\infty})\norm{u}_{L^2},\quad u\in C^{\infty}({\bf T}^n),
\endeq
provided that
\begeq
\label{app23.3}
\abs{z} \geq Ch, \quad {\rm Im}\, z \leq {\cal O}(h).
\endeq

\medskip
\noindent
Using that on the support of $1-\chi$, the potential $V$ is bounded from below and ${\rm Im}\,z = {\cal O}(h)$, we see that for all $h>0$ small enough, we have
\begeq
\label{app23.31}
{\rm Im}((P-z)(1-\chi)u,(1-\chi)u)_{L^2} \geq \frac{1}{{\cal O}(1)}\norm{(1-\chi)u}^2_{L^2},
\endeq
and therefore,
\begeq
\label{app23.4}
\norm{(1-\chi)u}_{L^2} \leq C \norm{(P-z)(1-\chi)u}_{L^2}.
\endeq

\medskip
\noindent
Combining the estimates (\ref{app23.2}) and (\ref{app23.4}), we see that for all $h>0$ small enough,
\begeq
\label{app23.5}
Z \norm{\chi u}_{L^2} + \norm{(1-\chi)u}_{L^2} \leq {\cal O}(1)\norm{(P-z)u}_{L^2} + {\cal O}(1)\norm{[P,\chi]u}_{L^2}.
\endeq
Here we continue to write $Z = h^{2/3}\abs{z}^{1/3}$ and we notice that $h \ll Z$. We would like to estimate the commutator term in the right hand side
of (\ref{app23.5}), and to that end we write, using (\ref{app23.15}),
\begeq
\label{app23.6}
{\cal O}(1)\norm{[P,\chi]u}_{L^2} \leq {\cal O}(h)\norm{u}_{L^2} + {\cal O}(h)\norm{h u'}_{L^2}.
\endeq
The first term in the right hand side of (\ref{app23.6}) can be absorbed into the left hand side of (\ref{app23.5}), and we only have to estimate
${\cal O}(h) \norm{hu'}_{L^2}$. Now
$$
(\varphi(hD_x)u,u)_{L^2} + \norm{hu'}_{L^2}^2 = {\rm Re}\, ((P-z)u,u)_{L^2} + {\rm Re}\, z \norm{u}_{L^2}^2,
$$
and therefore,
\begeq
\label{app23.7}
\norm{hu'}_{L^2} \leq \norm{(P-z)u}_{L^2}^{1/2}\norm{u}_{L^2}^{1/2} + \abs{z}^{1/2}\norm{u}_{L^2} + {\cal O}(1)\norm{u}_{L^2}.
\endeq
We obtain, combining (\ref{app23.5}), (\ref{app23.6}), and (\ref{app23.7}),
\begin{multline}
\label{app23.8}
Z \norm{\chi u}_{L^2} + \norm{(1-\chi)u}_{L^2} \\
\leq {\cal O}(1)\norm{(P-z)u} + {\cal O}(h) \abs{z}^{1/2}\norm{u}_{L^2} +
{\cal O}(h) \norm{(P-z)u}_{L^2}^{1/2}\norm{u}_{L^2}^{1/2},
\end{multline}
so that
\begeq
\label{app23.9}
Z \norm{\chi u}_{L^2} + \norm{(1-\chi)u}_{L^2} \leq {\cal O}(1) \norm{(P-z)u}_{L^2} + {\cal O}(Z^{3/2})\norm{u}_{L^2}.
\endeq
Assuming that
$$
Z = h^{2/3} \abs{z}^{1/3} \ll 1,
$$
we may absorb the second term in the right hand side of (\ref{app23.9}) into the left hand side, obtaining that
$$
h^{2/3}\abs{z}^{1/3} \norm{u}_{L^2} \leq {\cal O}(1) \norm{(P-z)u}_{L^2}.
$$

\medskip
\noindent
We may summarize the discussion above in the following proposition.
\begin{prop}
Let $P = g(hD_x)(h D_x)^2 + iV(x)$ be such that $g-1\in C^{\infty}_0(\real;\real)$ satisfies $g\geq 1$, $\abs{\xi g'(\xi)}\ll 1$. Assume furthermore that
$0\leq V \in C^{\infty}({\bf T})$, with $V^{-1}(0) = \{x_0\}$ and $V''(x_0)>0$. Then for every $C>1$ large enough,
we have for
$$
{\rm Im}\,z = {\cal O}(h), \quad \abs{z}\geq Ch,
$$
satisfying
\begeq
\label{app23.10}
h \abs{z}^{1/2} \ll 1,
\endeq
the following estimate
\begeq
\label{app23.11}
h^{2/3}\abs{z}^{1/3} \norm{u}_{L^2} \leq {\cal O}(1) \norm{(P-z)u}_{L^2}.
\endeq
\end{prop}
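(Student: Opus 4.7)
The plan is to reduce the estimate on ${\bf T}$ to the one on $\real$ established in Theorem A.1 by means of a microlocalization at the well $x_0$, together with a straightforward elliptic estimate in the complementary region. Write the operator as $P = (hD_x)^2 + \varphi(hD_x) + iV(x)$ with $\varphi \in C^{\infty}_0(\real;\real)$, and pick $\chi \in C^{\infty}({\bf T};[0,1])$ equal to $1$ near $x_0$ and supported in a small neighborhood where Theorem A.1 applies after trivially extending $V$ to $\real$. For the localized piece, Theorem A.1 yields
\[
h^{2/3}\abs{z}^{1/3}\norm{\chi u}_{L^2} \leq {\cal O}(1)\norm{(P-z)\chi u}_{L^2} + {\cal O}(h^{\infty})\norm{u}_{L^2},
\]
while away from $x_0$ we have $V \geq 1/{\cal O}(1)$, so combining this with ${\rm Im}\, z = {\cal O}(h)$ gives, via the standard positivity argument,
\[
\norm{(1-\chi)u}_{L^2} \leq {\cal O}(1)\norm{(P-z)(1-\chi)u}_{L^2}.
\]

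The first real task is then the control of the commutator that appears when one groups the two bounds into a single estimate for $u$: $[P,\chi]$ contains a first-order differential term, so one only obtains
\[
{\cal O}(1)\norm{[P,\chi]u}_{L^2} \leq {\cal O}(h)\norm{u}_{L^2} + {\cal O}(h)\norm{hu'}_{L^2}.
\]
The first contribution is harmless because $h \ll Z := h^{2/3}\abs{z}^{1/3}$, but the derivative term has to be estimated by an energy identity. Taking the real part of $((P-z)u,u)_{L^2}$ produces
\[
\norm{hu'}_{L^2}^2 + (\varphi(hD_x)u,u)_{L^2} = {\rm Re}((P-z)u,u)_{L^2} + {\rm Re}\,z\,\norm{u}_{L^2}^2,
\]
from which $\norm{hu'}_{L^2} \leq \norm{(P-z)u}^{1/2}\norm{u}^{1/2} + \abs{z}^{1/2}\norm{u}_{L^2} + {\cal O}(1)\norm{u}_{L^2}$.

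Plugging this back in and using $Z \leq \abs{z}$ one arrives at an estimate of the form
\[
Z\norm{u}_{L^2} \leq {\cal O}(1)\norm{(P-z)u}_{L^2} + {\cal O}(Z^{3/2})\norm{u}_{L^2},
\]
and this is where the smallness hypothesis $h\abs{z}^{1/2} \ll 1$ comes in decisively: it is precisely equivalent to $Z^{1/2} \ll 1$, which allows one to absorb the last term into the left-hand side. That absorption step is the one genuinely delicate point of the argument — everything else is either a direct application of Theorem A.1, a trivial ellipticity away from $x_0$, or a textbook energy identity. The main obstacle, in short, is bookkeeping: producing the commutator estimate in a form sharp enough so that its error $hZ^{1/2}\abs{z}^{1/2} = h\abs{z}^{1/2}\cdot Z$ is strictly smaller than $Z$, which forces exactly the assumption $h\abs{z}^{1/2} \ll 1$ appearing in the statement.
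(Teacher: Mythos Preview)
Your proof is correct and follows essentially the same route as the paper's own argument: localize via $\chi$ near $x_0$ and apply Theorem A.1, use the lower bound on $V$ away from $x_0$ for the complementary piece, estimate the commutator via the energy identity for $\norm{hu'}$, and absorb the resulting $\mathcal{O}(Z^{3/2})\norm{u}$ error using $Z=h^{2/3}\abs{z}^{1/3}\ll 1$. The only slip is the arithmetic in your final parenthetical (the error term is $h\abs{z}^{1/2}=Z^{3/2}$, not $hZ^{1/2}\abs{z}^{1/2}$), but this does not affect the argument.
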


\bigskip
\noindent
The a priori estimate (\ref{app23.11}) is equivalent to the corresponding estimate for the resolvent of $P$, which provides a resolvent bound in the model case,
fundamental for the analysis in Section 7. Now the operators that one encounters there are somewhat more general than the Schr\"odinger type operator in
(\ref{app23.1}), in that the potential $V(x)$ should be replaced by a more general $h$-pseudodifferential operator, which furthermore is multiplied by a small
coupling constant. We shall now proceed to analyze this more general case, essentially by reducing it to the model situation treated above.

\bigskip
\noindent
Let us first consider the following operator on $\real$,
\begeq
\label{app24}
P_{\eps}(x,hD_x) =g(hD_x) (hD_x)^2 + i\eps \widetilde{V}^w(x,hD_x),
\endeq
where $g$ satisfies (\ref{app1.001}), (\ref{app1.01}), and following (\ref{eq6.0}), we assume that
\begeq
\label{eq24.1}
h^2 \ll \eps \ll h^{4/5}.
\endeq
The function $\widetilde{V} \in C^{\infty}(\real^{2})$ in (\ref{app24}) is of the form
\begeq
\label{app25}
\widetilde{V}(x,\xi) = V(x) + k(x,\xi) \varphi\left(\frac{\xi}{\eps^{\delta}}\right), \quad \delta \in (0,1/2),
\endeq
where $V$ is assumed to satisfy (\ref{app1.1}), (\ref{app2}), (\ref{app3.1}), and (\ref{app3.2}), and $\varphi \in C^{\infty}_0(\real^n)$ is a standard cutoff
function near $\xi = 0$. The function $k$ is such that
\begeq
\label{app25.1}
\partial^{\alpha} k\in L^{\infty}(\real^{2}),\quad \alpha\in {\nat}^{2},\quad k(x,0) = 0.
\endeq
We would like to extend Theorem A.1 to the operator $P_{\eps}(x,hD_x)$, and to that end we shall simply inspect the arguments above. Writing
\begeq
\label{app26}
\frac{1}{i \eps}P_{\eps}(x,hD_{x}) = -i g(\sqrt{\eps}\widetilde{h}D_x)(\widetilde{h} D_x)^2 + \widetilde{V}^w(x,\sqrt{\eps}\widetilde{h}D_{x}),
\quad \widetilde{h} = \frac{h}{\sqrt{\eps}} \ll 1,
\endeq
we shall view $(1/i \eps) P_{\eps}$ as an $\widetilde{h}$--pseudodifferential operator with the symbol
\begeq
\label{app27}
p(x,\xi) = p_1 + i p_2,
\endeq
where
\begeq
\label{app27.1}
p_1(x,\xi) = \widetilde{V}(x,\sqrt{\eps}\xi) , \quad p_2(x,\xi) = -g(\sqrt{\eps}\xi)\xi^2.
\endeq
Writing
$$
p_1(x,0) = V(x) = V_0(x) + {\cal O}(x^3),\quad V_0(x) = \frac{1}{2}V''(0)x^2,
$$
we see that uniformly in $\eps$, we have
\begeq
\label{app27.2}
V_0 + H_{p_2}^2 V_0 \sim \abs{X}^2,\quad X\in \real^2.
\endeq
Here we also notice that $\partial^{\alpha} p\in L^{\infty}(\real^{2})$ for all $\alpha \in \nat^{2}$ with $\abs{\alpha}\geq 2$, uniformly in $\eps$.
Arguing as in (\ref{app7}), (\ref{app8}), (\ref{app11}), we conclude that there exists a real-valued weight function $G\in C^{\infty}(\real^2)$ with
$$
G(X) = {\cal O}(\widetilde{h}),
$$
such that for some constants $\delta_1 > 0$ and $c_1 > 0$, we have for $0 < \widetilde{h}$ small enough,
\begeq
\label{app28}
{\rm Re}\, p(x,0) + \delta_1 H_{{\rm Im}\,p} G(X) + c_1 \widetilde{h} \geq \frac{1}{C} \widetilde{h}^{2/3}\abs{X}^{2/3},\quad X=(x,\xi) \in \real^{2}.
\endeq
Using (\ref{app25}) and (\ref{app28}), we obtain that
\begeq
\label{app29}
{\rm Re}\, p(X) + \delta_1 H_{{\rm Im}\,p} G(X) + c_1 \widetilde{h}
\geq \frac{1}{C} \widetilde{h}^{2/3}\abs{X}^{2/3} - 1_{\sqrt{\eps}\abs{\xi} \leq {\cal O}(\eps^{\delta})} {\cal O}(\sqrt{\eps}\abs{\xi}).
\endeq
Here we have used that
\begeq
\label{app29.1}
k(x,\xi) = {\cal O}(\abs{\xi}).
\endeq
When estimating the right hand side in (\ref{app29}), we notice that
\begin{multline}
\frac{1}{2C}\widetilde{h}^{2/3}\abs{X}^{2/3} - 1_{\sqrt{\eps}\abs{\xi} \leq {\cal O}(\eps^{\delta})} {\cal O}(\sqrt{\eps}\abs{\xi})
\geq \frac{1}{2C} \abs{\xi}^{2/3}\left(\widetilde{h}^{2/3} - 1_{\sqrt{\eps}\abs{\xi} \leq {\cal O}(\eps^{\delta})} {\cal O}(\sqrt{\eps}\abs{\xi}^{1/3})\right) \\
\geq \frac{1}{2C} \abs{\xi}^{2/3} \left(\widetilde{h}^{2/3} - {\cal O}(\sqrt{\eps} \eps^{\delta/3 - 1/6})\right) \geq 0,
\end{multline}
provided that
\begeq
\label{app30}
\eps^{\delta/3 + 1/2 - 1/6} \ll \widetilde{h}^{2/3} = \frac{h^{2/3}}{\eps^{1/3}}.
\endeq
The latter condition is equivalent to
\begeq
\label{app31}
\eps^{1 + \frac{\delta}{2}} \ll h.
\endeq
Assuming that (\ref{app31}) holds, we conclude that
\begeq
\label{app32}
{\rm Re}\, p(X) + \delta_1 H_{{\rm Im}\,p} G(X) + c_1 \widetilde{h} \geq \frac{1}{{\cal O}(1)} \widetilde{h}^{2/3}\abs{X}^{2/3}, \quad X\in \real^{2}.
\endeq
In order to be able to apply the general arguments of~\cite{HiPrSt} and~\cite{HeSjSt} to the operator $P_{\eps}/i\eps$, similarly to the discussion leading to
Theorem A.1 above, we should also observe that for $\abs{z} \gg \widetilde{h}$, we have in the region where
$$
\abs{X}^2 \leq \frac{\abs{z}}{C},
$$
that the ellipticity condition
$$
\abs{p(X) - z} \geq \frac{\abs{z}}{C_1},
$$
holds, uniformly in $\eps$. Indeed, this follows from (\ref{app25}), (\ref{app27}), (\ref{app27.1}), (\ref{app29.1}), as well as the fact that $\eps \ll \widetilde{h} \ll \abs{z}$.
It is then straightforward to check that the arguments in the beginning of the appendix apply and we obtain the following result.

\begin{theo}
Let $P_{\eps} = g(hD_x)(hD_x)^2 + i\eps \widetilde{V}^w(x,h D_x)$, where $h^2 \ll \eps \ll h^{4/5}$. Assume that $g\in C^{\infty}(\real;\real)$ satisfies
{\rm (\ref{app1.001})}, {\rm (\ref{app1.01})},
$\widetilde{V}$ is of the form
{\rm (\ref{app25})}, {\rm (\ref{app25.1})}, and that {\rm (\ref{app1.1})}, {\rm (\ref{app2})}, {\rm (\ref{app3.1})}, {\rm (\ref{app3.2})} hold. Assume that
$$
\eps^{1 + \frac{\delta}{2}} \ll h.
$$
Then we have, writing $\widetilde{h} = h/\sqrt{\eps}$,
\begeq
\label{app33}
\left(\frac{P_{\eps}}{\eps} - z\right)^{-1} = {\cal O}\left(\widetilde{h}^{-2/3}\abs{z}^{-1/3}\right): L^2(\real) \rightarrow L^2(\real),
\endeq
provided that $\abs{z} \geq C \widetilde{h}$ for $C>1$ sufficiently large, and
$$
{\rm Im}\,z \leq {\cal O}(\widetilde{h}).
$$
\end{theo}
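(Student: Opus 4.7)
The plan is to reduce Theorem A.3 to a parametrized version of the model estimate behind Theorem A.1, viewing $P_\eps/(i\eps)$ as an $\widetilde h$-pseudodifferential operator with the small parameter $\widetilde h = h/\sqrt\eps$. As in the rewriting (\ref{app26}), I would work with the symbol $p(x,\xi) = p_1 + ip_2$ of (\ref{app27}), (\ref{app27.1}), with $p_1(x,\xi) = \widetilde V(x,\sqrt\eps\,\xi)$ and $p_2(x,\xi) = -g(\sqrt\eps\,\xi)\xi^2$. Writing $p_1(x,0) = V_0(x) + \mathcal O(x^3)$, the key uniform structural fact is (\ref{app27.2}), saying $V_0 + H_{p_2}^2 V_0 \sim |X|^2$ on $\real^2$, together with $\partial^\alpha p = \mathcal O(1)$ uniformly in $\eps$ for $|\alpha|\ge 2$. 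These are precisely the two ingredients that the Hérau--Sjöstrand--Stolk weight construction requires, so the setup is essentially identical to the one behind Theorem A.1, but parametrized by $\sqrt\eps$.

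First I would construct the weight $G = G(X,\widetilde h) = \mathcal O(\widetilde h)$, $\nabla G = \mathcal O(\widetilde h^{1/2})$, following (\ref{app7}), (\ref{app7.4}) verbatim (with $h$, $V_0$, $p_2$ replaced by $\widetilde h$ and the rescaled analogs), obtaining the subelliptic lower bound
\begin{equation*}
\mathrm{Re}\, p(x,0) + \delta_1 H_{\mathrm{Im}\, p}G(X) + c_1 \widetilde h \geq \frac{1}{C}\widetilde h^{2/3}|X|^{2/3}, \quad X\in \real^2,
\end{equation*}
which is (\ref{app28}). At $\xi = 0$ the perturbation in (\ref{app25}) is inactive, since $k(x,0) = 0$, so this step is purely structural and involves no restriction on $\eps$.

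The second and crucial step is to upgrade from $\mathrm{Re}\, p(x,0)$ to $\mathrm{Re}\, p(X)$, i.e.\ to pass from (\ref{app28}) to the global lower bound (\ref{app32}). Here the cutoff $\varphi(\sqrt\eps\,\xi/\eps^\delta)$ localizes to the set $\sqrt\eps\,|\xi| \lesssim \eps^\delta$ on which $k(x,\sqrt\eps\,\xi) = \mathcal O(\sqrt\eps\,|\xi|)$, and the task is to dominate this by $\widetilde h^{2/3}|\xi|^{2/3}/C$. Writing $\sqrt\eps\,|\xi| = \sqrt\eps\,|\xi|^{1/3}\cdot|\xi|^{2/3}$ and using $|\xi| \le \eps^{\delta - 1/2}$ on the support of $\varphi$, the comparison reduces to $\sqrt\eps\cdot \eps^{\delta/3 - 1/6} \ll \widetilde h^{2/3} = h^{2/3}/\eps^{1/3}$, i.e.\ $\eps^{1+\delta/2} \ll h$, which is precisely the hypothesis of the theorem. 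This is the step where the smallness of $\eps$ is actually used, and I expect it to be the main obstacle, because it is the one that dictates the permitted range of $\eps$ in the main result.

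With the subelliptic bound (\ref{app32}) in hand, the remaining argument is a mechanical rerun of the proof of Theorem A.1. For bounded $|z|$ with $|z|\gg \widetilde h$ and $\mathrm{Im}\, z \leq \mathcal O(\widetilde h)$, I would use (\ref{app32}) together with the quantization--multiplication version of the sharp G\aa{}rding inequality, exactly as in Section 4 of~\cite{HiPrSt}, to get the a priori bound $\widetilde h^{2/3}|z|^{1/3}\|u\|_{L^2} \leq C\|(P_\eps/\eps - z)u\|_{L^2}$. For larger $|z|$, the extra term to be controlled involves a localizer $\varphi(|X|^2/|z|)^w$, which is handled by the analog of Lemma~8.2 of~\cite{HeSjSt}; its hypotheses reduce to the uniform ellipticity $|p(X) - z|\geq |z|/C_1$ on $|X|^2\leq |z|/C$ (valid since $\eps \ll \widetilde h \ll |z|$ and $k\varphi = \mathcal O(\sqrt\eps\,|\xi|)$) together with $\partial^\alpha p \in L^\infty$ for $|\alpha|\ge 2$, both uniform in $\eps$. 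Combining these two regimes along the lines of (\ref{app19})--(\ref{app21}) yields the claimed resolvent bound (\ref{app33}).
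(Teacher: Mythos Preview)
Your proposal is correct and follows essentially the same route as the paper: rescale to view $P_\eps/(i\eps)$ as an $\widetilde h$-pseudodifferential operator, build the weight $G$ from (\ref{app7}), (\ref{app7.4}) using the uniform subelliptic structure (\ref{app27.2}) to get (\ref{app28}), absorb the $k$-perturbation on the support of $\varphi$ via the condition $\eps^{1+\delta/2}\ll h$ to reach (\ref{app32}), and then rerun the arguments of \cite{HiPrSt}, \cite{HeSjSt} (including the ellipticity check for Lemma~8.2) exactly as in the proof of Theorem~A.1.
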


\medskip
\noindent
Repeating the arguments leading to Proposition A.2, with Theorem A.1 replaced by Theorem A.3 and with an estimate of the form (\ref{app23.31}) obtained by an
application of G\aa{}rding's inequality, we next obtain an adaptation of Theorem A.3 to the setting of the torus.
\begin{prop}
Let $P_{\eps} = g(hD_x)(h D_x)^2 + i\eps \widetilde{V}^w(x,hD_x)$, where $h^2 \ll \eps \ll h^{4/5}$, and $g\in C^{\infty}(\real;\real)$ is such that
$g-1\in C^{\infty}_0(\real)$ with
$$
g\geq 1,\quad \abs{\xi g'(\xi)} \ll 1.
$$
Assume that $\widetilde{V}$ is of the form
$$
\widetilde{V}(x,\xi) = V(x) + k(x,\xi) \varphi\left(\frac{\xi}{\eps^{\delta}}\right),\quad \delta\in (0,1/2).
$$
Here $0\leq V\in C^{\infty}({\bf T})$, $V^{-1}(0) = \{x_0\}$, $V''(x_0) > 0$, $k\in S(T^*{\bf T}, 1)$, $k(x,0) = 0$, and $\varphi \in C^{\infty}_0(\real)$.
Assume that $\eps^{1 + \frac{\delta}{2}} \ll h$ and let us set
$$
\widetilde{h} = \frac{h}{\sqrt{\eps}} \ll 1.
$$
Let $z\in \comp$ be such that ${\rm Im}\, z = {\cal O}(\widetilde{h})$, $\abs{z}\geq C\widetilde{h}$, for $C>1$ sufficiently large, and
$$
\widetilde{h}\abs{z}^{1/2} \ll 1.
$$
Then we have
\begeq
\label{app34}
\left(\frac{P_{\eps}}{\eps} - z\right)^{-1} = {\cal O}\left(\widetilde{h}^{-2/3}\abs{z}^{-1/3}\right): L^2({\bf T}) \rightarrow L^2({\bf T}).
\endeq
\end{prop}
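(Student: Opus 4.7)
The plan is to reduce Proposition A.4 to Theorem A.3 by a localization argument, in direct analogy with how Proposition A.2 is derived from Theorem A.1 in the preceding discussion. Since the rescaled operator $P_\eps/\eps$, viewed with semiclassical parameter $\widetilde h = h/\sqrt\eps$, behaves near its only critical point $x_0$ exactly like the model on $\real$ handled by Theorem A.3, all one has to do is patch this local estimate with an elliptic estimate away from $x_0$. The hypothesis $\widetilde h |z|^{1/2} \ll 1$ in the statement will enter in the final step, exactly as (\ref{app23.10}) did for Proposition A.2.

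First I would pick $\chi\in C^\infty({\bf T};[0,1])$ equal to $1$ in a small neighborhood of $x_0$ and supported in a chart where ${\bf T}$ is identified with $\real$. On $\chi u$, the operator $P_\eps/\eps$ is literally of the form treated in Theorem A.3 (with the same symbol $g$, potential $V$, and perturbation $k\varphi(\xi/\eps^\delta)$), so the theorem gives
\[
\widetilde h^{2/3}|z|^{1/3} \|\chi u\|_{L^2} \leq C\,\|(P_\eps/\eps - z)(\chi u)\|_{L^2} + {\cal O}(\widetilde h^\infty)\|u\|_{L^2}.
\]
Next, on $\mathrm{supp}(1-\chi)$ the potential $V$ is bounded below by a positive constant, and since $k(x,0)=0$ one has $|k(x,\xi)\varphi(\xi/\eps^\delta)| = {\cal O}(\eps^\delta)$, so $\widetilde V \geq c_0/2 > 0$ there. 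Since $\widetilde V$ is real-valued, $\widetilde V^w$ is self-adjoint, and the sharp G\aa rding inequality combined with ${\rm Im}\, z = {\cal O}(\widetilde h)$ yields
\[
\|(1-\chi)u\|_{L^2} \leq C\,\|(P_\eps/\eps - z)(1-\chi)u\|_{L^2}.
\]

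Combining the two estimates via the partition $1 = \chi + (1-\chi)$ and writing $Z = \widetilde h^{2/3}|z|^{1/3}$ gives
\[
Z\,\|\chi u\|_{L^2} + \|(1-\chi)u\|_{L^2} \leq C\,\|(P_\eps/\eps - z)u\|_{L^2} + C\,\|[P_\eps/\eps, \chi]u\|_{L^2}.
\]
The commutator is handled in two pieces. Rewriting $g(hD_x)(hD_x)^2/\eps = g(hD_x)(\widetilde h D_x)^2$, the kinetic commutator is ${\cal O}(\widetilde h)(1 + \widetilde h D_x)$, and the term $\widetilde h D_x u$ is controlled by the standard energy identity
\[
\|\widetilde h D_x u\|_{L^2}^2 \leq {\rm Re}\,(P_\eps/\eps \, u, u) + |z|\,\|u\|_{L^2}^2 + {\cal O}(1)\|u\|_{L^2}^2,
\]
using $g\geq 1$. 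The commutator $[i\widetilde V^w, \chi]$ is of order $h$ (hence much smaller than $\widetilde h$) by the Weyl calculus. Inserting these into the previous inequality and applying the Cauchy--Schwarz and Peter--Paul arguments exactly as in the passage from (\ref{app23.5}) to (\ref{app23.9}) produces
\[
Z\,\|u\|_{L^2} \leq C\,\|(P_\eps/\eps - z)u\|_{L^2} + {\cal O}(Z^{3/2})\|u\|_{L^2},
\]
and the last term is absorbed into the left-hand side since $Z^{3/2} = \widetilde h\,|z|^{1/2} \ll 1$ by hypothesis. This is the desired a priori estimate, equivalent to (\ref{app34}).

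The main obstacle is the commutator $[i\widetilde V^w, \chi]$, because the cutoff $\varphi(\xi/\eps^\delta)$ has $\xi$-derivatives of size $\eps^{-\delta}$, so $\widetilde V$ is a slightly exotic symbol in the cotangent variable. A naive symbolic calculus would produce a commutator of size $h\eps^{-\delta}$, which is larger than $\widetilde h$ and too weak for the final absorption. The saving grace is the assumption $k(x,0)=0$: the product $k(x,\xi)\varphi(\xi/\eps^\delta)$ carries an extra factor $|\xi| = {\cal O}(\eps^\delta)$ on the support of the cutoff, so that $\partial_\xi(k\varphi)$ is genuinely bounded, and the Weyl commutator with $\chi(x)$ is truly ${\cal O}(h)$. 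Verifying this compensation carefully within the $\widetilde h$-Weyl calculus, and checking that the G\aa rding inequality used in Step 2 tolerates the exotic $\xi$-dependence, is the one technical point requiring care; everything else is a direct transcription of the argument leading from Theorem A.1 to Proposition A.2.
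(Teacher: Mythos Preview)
Your proposal is correct and follows essentially the same approach as the paper, which simply states that one repeats the arguments leading to Proposition~A.2, replacing Theorem~A.1 by Theorem~A.3 and obtaining the analogue of (\ref{app23.31}) via G\aa{}rding's inequality. Your discussion of the commutator $[i\widetilde V^w,\chi]$ and the role of the hypothesis $k(x,0)=0$ in keeping $\partial_\xi\widetilde V$ bounded is a correct elaboration of a point the paper leaves implicit.
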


\end{appendix}


\begin{thebibliography}{30}

%\bibitem{BBGS} G. Bennetin, L. Galgani, A. Giorgilli, and J.-M. Strelcyn, {\it A proof of Kolmogorov's theorem on invariant
%tori using canonical transformations defined by the Lie method}, Nuovo Cimento B {\bf 79} (1984), 201--223.

%\bibitem{Br} J. Bronski, {\it Semiclassical eigenvalue distribution of the Zakharov-Shabat eigenvalue problem},
%Phys. D {\bf 97} (1996), 376-–397.

%\bibitem{BurqGerard} N. Burq and P. G\'erard, {\it Contr\^ole optimal des \'equations aux d\'eriv\'ees partielles}, Cours
%de l'Ecole Polytechnique, 2002. See {\sf http://www.math.u-psud.fr/$\sim$burq}.

%\bibitem{CdV} Y. Colin de Verdi\`ere, {\it Sur le spectre des
%op\'erateurs elliptiques \`a bicaract\'eristiques
%toutes p\'eriodiques}, Comment. Math. Helv. {\bf 54} (1979), 508--522.

\bibitem{CdV} Y.~Colin de Verdi\`ere, {\it La m\'ethode de moyennisation en m\'ecanique semi-classique}, Journ\'ees "\'Equations aux D\'eriv\'ees Partielles'',
Saint-Jean-de-Monts, 1996, Exp. No. V, 11 pp., \'Ecole Polytech., Palaiseau, 1996.

\bibitem{DeSjZw} N. Dencker, J. Sj\"ostrand, and M. Zworski, {\it Pseudo-\-spectra of se\-mi\-clas\-si\-cal
(pseudo)\-diffe\-ren\-tial ope\-rators}, Comm. Pure Appl. Math. {\bf 57} (2004), 384-415.

%\bibitem{DiSj} M. Dimassi and J. Sj\"ostrand, {\it Spectral asymptotics in the semi-classical limit}, Cambridge
%University Press, 1999.

\bibitem{GaSh} S. Galtsev and A. Shafarevich, {\it Quantized Riemann surfaces and semiclassical spectral series for a nonselfadjoint Schr\"odinger
operator with periodic coefficients}, (Russian) Teoret. Mat. Fiz. {\bf 148} (2006), 206--226; translation in Theoret. and Math. Phys. {\bf 148} (2006), 1049–-1066.

%\bibitem{GeSj} C. G\'erard and J. Sj\"ostrand, {\it R\'esonances en limite semiclassique et exposants de Lyapunov}, Comm. Math. Phys. {\bf 116} (1988), 193--213.

%\bibitem{GoKr} I. C. Gohberg and M. G. Krein, {\it Introduction to the theory of linear nonselfadjoint operators},
%Translations of Mathematical Monographs, Vol. 18 American Mathematical Society, Providence, R.I. 1969.

%\bibitem{GuSt} V. Guillemin and M. Stenzel, {\it Grauert tubes and the homogeneous Monge-Amp\`ere equations}, J. Differential Geom. {\bf 34} (1991), 561--570.

%\bibitem{HaRo} G. A. Hagedorn and S. L. Robinson, {\it Bohr-Sommerfeld quantization rules in the semiclassical limit}, J. Phys. A {\bf 31} (1998), 10113--10130.

%\bibitem{HeRo} B. Helffer and D. Robert, {\it Asymptotique des niveaux d'\'energie pour des hamiltoniens \`a un degr\'e de libert\'e}, Duke
%Math. J. {\bf 49} (1982), 853--868.

%\bibitem{HeSj1} B. Helffer and J. Sj\"ostrand, {\it Multiple wells in
%the semi-classical limit {\rm I}}, Comm. Partial Differential
%Equations {\bf 9} (1984), 337--408.

\bibitem{HeSjSt} F. H\'erau, J. Sj\"ostrand, and C. Stolk, {\it Semiclassical analysis for the Kramers--Fokker--Planck equation},
Comm. Partial Differential Equations {\bf 30} (2005), 689--760.

%\bibitem{Hi02} M. Hitrik, {\it Eigenfrequencies for damped wave equations on Zoll manifolds},
%Asymptot. Analysis {\bf 31} (2002), 265--277.

%\bibitem{Hi03} M. Hitrik, {\it Eigenfrequencies and expansions for damped wave equations}, Methods Appl. Anal. {\bf 10} (2003), 543--564.

\bibitem{Hi04} M. Hitrik, {\it Boundary spectral behavior for semiclassical operators in dimension one}, Int. Math. Res. Not. {\bf 64} (2004), 3417--3438.

%\bibitem{HiPr2} M. Hitrik and K. Pravda-Starov, {\it Semiclassical hypoelliptic estimates for non-selfadjoint operators with double characteristics},
%Comm. Partial Differential Equations {\bf 35} (2010), 988-–1028.

\bibitem{HiPrSt} M. Hitrik and K. Pravda-Starov, {\it Eigenvalues and subelliptic estimates for non-selfadjoint semiclassical operators with double characteristics},
Ann. Inst. Fourier {\bf 63} (2013), 985--1032.

\bibitem{HiSj1} M. Hitrik and J. Sj\"ostrand, {\it Non-selfadjoint perturbations of selfadjoint operators in {\rm 2} dimensions {\rm I}},
Ann. Henri Poincar\'e {\bf 5} (2004), 1--73.

\bibitem{HiSj2} M. Hitrik and J. Sj\"ostrand, {\it Non-selfadjoint perturbations of selfadjoint operators in {\rm 2}
dimensions {\rm II}. Vanishing averages}, Comm. Partial Differential Equations {\bf 30} (2005), 1065--1106.

\bibitem{HiSj3a} M. Hitrik and J. Sj\"ostrand, {\it Non-selfadjoint perturbations of selfadjoint operators in
{\rm 2} dimensions {\rm III a}. One branching point}, Canadian J. Math. {\bf 60} (2008), 572–-657.

\bibitem{HiSj08b} M.~Hitrik and J.~Sj\"ostrand, {\it Rational invariant tori, phase
space tunneling, and spectra for non-selfadjoint operators in dimension {\rm 2}}, Annales Sci ENS, s\'er. {\bf 41} (2008), 511--571.

\bibitem{HiSj12} M. Hitrik and J. Sj\"ostrand, {\it Diophantine tori and Weyl laws for non-selfadjoint operators in dimension two},
Comm. Math. Phys. {\bf 314} (2012), 373-–417.

\bibitem{HiSjVu07} M. Hitrik, J. Sj\"ostrand, and S. V\~u Ng\d{o}c, {\it Diophantine tori and spectral asymptotics for non-selfadjoint operators},
Amer. J. Math. {\bf 129} (2007), 105--182.

%\bibitem{Ho} L. H\"ormander, {\it The analysis of linear partial differential operators {\rm I}}, Springer-Verlag, Berlin, 1983.

\bibitem{Kato} T. Kato, {\it Perturbation theory for linear operators}, Grundlehren der Mathematischen Wissenschaften, Springer Verlag, Berlin--New York, 1976.

\bibitem{Le} G. Lebeau, {\it \'Equation des ondes amorties}, Algebraic and geometric methods in mathematical physics
(Kaciveli, 1993), 73--109, Math. Phys. Stud., {\bf 19}, Kluwer
Acad. Publ., Dordrecht, 1996.

\bibitem{LeLe} M. L\'eautaud and N. Lerner, {\it Energy decay for a locally undamped wave equation}, preprint, 2014, {\sf http://arxiv.org/abs/1411.7271}.

\bibitem{LcLb} A. J. Lichtenberg and M. A. Lieberman, {\it Regular and chaotic dynamics}, Second edition, Springer-Verlag, New York, 1992.

\bibitem{Ma} A. S. Markus, {\it Introduction to the spectral theory of polynomial operator pencils}, Translations
of Mathematical Monographs 71, American Mathematical Society, Providence RI, 1998.

\bibitem{MaMa} A. S. Markus and V. I. Matsaev, {\it Comparison theorems for spectra of linear operators and spectral
asymptotics} (Russian), Trudy Moskov. Mat. Obsch. {\bf 45} (1982), 133--181.

\bibitem{MeSj1} A. Melin and J. Sj\"ostrand,
{\it Determinats of pseudodifferential operators and complex deformations of phase space}, Methods and Appl. of Analysis {\bf 9} (2002), 177--238.

\bibitem{MeSj2} A. Melin and J. Sj\"ostrand, {\it Bohr-Sommerfeld quantization condition for non-selfadjoint operators
in dimension {\rm 2}}, Ast\'erisque {\bf 284} (2003), 181--244.

\bibitem{LN} L. Nedelec, {\it Perturbations of non self-adjoint Sturm-Liouville problems, with applications to harmonic oscillators},
Methods Appl. Anal. {\bf 13} (2006), 123–-148.

\bibitem{PrSt1} K. Pravda-Starov, {\it A complete study of the pseudo-spectrum for the rotated harmonic oscillator},
J. London Math. Soc. {\bf 73} (2006), 745–-761.

%\bibitem{PrSt2} K. Pravda-Starov, {\it Subelliptic estimates for quadratic differential operators}, Amer. J. Math. {\bf 133} (2011), 39-–89.

\bibitem{R} P. Redparth, {\it Spectral properties of non-self-adjoint operators in the semi-classical regime},
J. Diff. Equations {\bf 177} (2001), 307–-330.

\bibitem{Sj82} J. Sj\"ostrand, {\it Singularit\'es analytiques microlocales}, Ast\'erisque, 1982.

%\bibitem{Sj90} J. Sj\"ostrand, {\it Geometric bounds on the density of resonances for semiclassical problems}, Duke Math. Journal {\bf 60}
%(1990), 1--57.

\bibitem{Sj92} J. Sj\"ostrand, {\it Semi-excited states in non-degenerate potential wells}, Asymptot. Analysis {\bf 6} (1992), 29--43.

\bibitem{Sj96} J.~Sj\"ostrand, {\it Density of resonances for strictly convex analytic obstacles}, Canadian J. Math. {\bf 48} (1996), 397--447.

%\bibitem{Sj97} J. Sj\"ostrand, {\it A trace formula and review of some estimates for resonances}, Microlocal analysis
%and spectral theory (Lucca, 1996), 377--437, NATO Adv. Sci. Inst. Ser. C Math. Phys. Sci., {\bf 490}, Kluwer Acad.
%Publ., Dordrecht, 1997.

\bibitem{Sj00} J. Sj\"ostrand, {\it Asymptotic distribution of eigenfrequencies for damped wave equations}, Publ. Res.
Inst. Math. Sci. {\bf 36} (2000), 573--611.

%\bibitem{Sj01} J. Sj\"ostrand, {\it Resonances for bottles and trace formulae}, Math. Nachr. {\bf 221} (2001), 95--149.

%\bibitem{Sj2003} J. Sj\"ostrand, {\it Perturbations of selfadjoint operators with periodic classical flow}, RIMS
%Kokyuroku 1315 (April 2003), "Wave phenomena and asymptotic analysis", 1--23.

\bibitem{SjZw1} J. Sj\"ostrand and M. Zworski, {\it Asymptotic distribution of resonances for convex obstacles}, Acta
Math. {\bf 183} (1999), 191--253.

%\bibitem{SjZw2} J. Sj\"ostrand and M. Zworski, {\it Fractal upper bounds on the density of semiclassical resonances},
%Duke Math. Journal {\bf 137} (2007), 381-459.

\bibitem{Viola12} J. Viola, {\it Resolvent estimates for non-selfadjoint operators with double characteristics}, J. Lond. Math. Soc. {\bf 85} (2012), 41-–78.

\bibitem{San} S. V\~u Ng\d{o}c, {\it Syst\`emes int\'egrables
semi-classiques: du local au global}, Panoramas et Synth\`eses 22, Soci\'et\'e Math\'ematique de France, Paris, 2006.

\bibitem{We} A. Weinstein, {\it Asymptotics of eigenvalue clusters for the Laplacian plus a potential}, Duke Math. J. {\bf 44} (1977), 883--892.

\end{thebibliography}
\end{document}